\definecolor{c1}{rgb}{0,0,1}
\definecolor{c2}{rgb}{0,0.3,0.9}
\definecolor{c3}{rgb}{0.3,0.9}
\newcommand{\bbR}{\mathbb{R}}
\newcommand{\R}{\bbR}
\newcommand\blfootnote[1]{%
  \begingroup
  \renewcommand\thefootnote{}\footnote{#1}%
  \addtocounter{footnote}{-1}%
  \endgroup
}
\def\XXint#1#2#3{{\setbox0=\hbox{$#1{#2#3}{\int}$ }
\vcenter{\hbox{$#2#3$ }}\kern-.6\wd0}}
\theoremstyle{plain}
\newtheorem{theorem}{Theorem}[section]
\theoremstyle{definition}
\theoremstyle{lemma}
\newtheorem{lemma}[theorem]{Lemma}
\theoremstyle{Remark}
\newtheorem{Remark}[theorem]{Remark}
\theoremstyle{proposition}
\newtheorem{proposition}[theorem]{Proposition}
\theoremstyle{corollary}
\newtheorem{corollary}[theorem]{Corollary}
\theoremstyle{example}
\newtheorem{example}[theorem]{Example}
\theoremstyle{assumption}
\newtheorem{assumption}[theorem]{Assumption}
\newcommand{\aaa}{\textcolor{red}}
\begin{document}
\pagestyle{empty}
\title{Exponential stability and hypoelliptic regularization for the kinetic Fokker-Planck equation with confining potential}
\author{Anton Arnold, 
 Gayrat Toshpulatov
 }

\maketitle
\blfootnote{\textit{Authors addresses}: Institute for Analysis and Scientific Computing, TU Wien,
Wiedner Hauptstraße 8–10, 1040 Vienna, Austria, 
 {\tt anton.arnold@tuwien.ac.at}, {\tt gayrat.toshpulatov@tuwien.ac.at} }
 \begin{small}
 \begin{center}
 
 \end{center}
 \end{small}
\pagestyle{plain}

\begin{abstract}
This paper is concerned with a modified entropy method to establish the large-time convergence towards the (unique) steady state, for  kinetic Fokker-Planck equations with non-quadratic confinement potentials in whole space. We extend previous approaches by analyzing Lyapunov functionals with non-constant weight matrices in the dissipation functional (a generalized Fisher information). We establish exponential convergence in a weighted $H^1$-norm with rates that become sharp in the case of quadratic potentials. In the defective case for quadratic potentials, i.e.\ when the drift matrix has non-trivial Jordan blocks, the weighted $L^2$-distance between a Fokker-Planck-solution and the steady state has always a sharp decay estimate of the order $\mathcal O\big( (1+t)e^{-t\nu/2}\big)$, with $\nu$ the friction parameter. The presented method also gives new hypoelliptic regularization results for kinetic Fokker-Planck equations (from a weighted $L^2$-space to a weighted $H^1$-space).
\end{abstract}
\begin{small}
 \textbf{Keywords:} Kinetic theory, Fokker-Planck equation, confinement potential, degenerate evolution, long time behavior,  convergence to equilibrium, hypocoercivity,  hypoellictic regularity, Lypunov functional.\\
\textbf{2020 Mathematics Subject Classification:}  35Q84, 
 35B40, 
 35Q82, 
 82C40. 
\end{small}
\tableofcontents
\newpage

\section{Introduction}

This paper is devoted to the study of the long time behavior of the kinetic Fokker-Planck equation
\begin{equation}\label{KFP}\begin{cases}
\partial_t f+v \cdot \nabla_x f-\nabla_x V \cdot \nabla_v f=\nu  \text{div}_v(vf)+\sigma \Delta_v f,\,   \,  \,  x,\,v \in \mathbb{R}^n,\, t>0\\
f(t=0)=f_0 \in L^1(\mathbb{R}^{2n})
\end{cases}
\end{equation}
describing the time evolution of the phase space probability density $f(t,x,v),$ e.g. in a plasma \cite{H.Risken}. Applications range from plasma physics \cite{Lib, F.P} to stellar dynamics \cite{chan1, chan}. Here $V=V(x)$ is a given smooth, bounded below confinement potential for the system, and $\nu>0, \,\sigma>0$ denote the friction and diffusion parameters, respectively.
This equation is associated with the Langevin stochastic differential equation
\begin{equation*}
\begin{cases}
dx_t=v_tdt\\
dv_t=-\nu v_t dt-\nabla V(x_t)dt+\sqrt{2\sigma} dB_t,
\end{cases}
\end{equation*}
 where $\{B_t\}_{t\geq 0}$ is a Brownian motion in $\mathbb{R}^n$ with covariance $\langle B_t,B_{t'}\rangle=\delta_{t-t'}.$\\ 
 Since the equation conserves mass, i.e.,
$$\int_{\mathbb{R}^{2n}}f(t,x,v) dxdv=\int_{\mathbb{R}^{2n}}f_0(x,v) dxdv,\, \, \, \, \, t\geq 0,$$
  we shall always assume (without restriction of generality) that $\displaystyle \int_{\mathbb{R}^{2n}}f_0(x,v) dxdv=1.$ The unique normalized steady state of \eqref{KFP} is given by 
\begin{equation}\label{steady.s} f_{\infty}(x,v)=c_Ve^{-\frac{\nu}{\sigma}[V(x)+\frac{|v|^2}{2}]},\, \,  \, x,v \in \mathbb{R}^n,
\end{equation}
where $c_V$ is a positive constant such that $\int_{\mathbb{R}^{2n}} f_{\infty}(x,v)dxdv=1.$
The following equation is also considered as the kinetic Fokker-Planck equation:
\begin{equation}\label{KFP2}\partial_t h+v \cdot \nabla_x h-\nabla_x V \cdot \nabla_v h=\sigma \Delta_v h-\nu v\cdot\nabla_v h,\,   \,  \,  x,\,v \in \mathbb{R}^n,\, t>0,
\end{equation}
and to switch from \eqref{KFP} to \eqref{KFP2} it suffices to set $h:=f/f_{\infty}.$

It was shown in \cite{Hel.Nier} that, if $V\in C^{\infty}(\mathbb{R}^n),$ \eqref{KFP2}
 generates a $C^{\infty}$ regularizing contraction semigroup in $L^2(\mathbb{R}^d, f_{\infty}):=\{g\colon \mathbb{R}^d \to \mathbb{R}: g \text{ is measurable and  } \int_{\mathbb{R}^d}g^2f_{\infty}dxdv< \infty \}, \, \, \, d=2n.$ For well-posedness with non-smooth potentials, we refer to \cite[Theorem 6, Theorem 7]{Vil}.

 The long time behavior and exponential convergence of the solution to the steady state has been studied and there are various results: in \cite{Des.Vil}, algebraic decay was proved for potentials that are asymptotically
quadratic (as $|x|\to \infty$) and for initial conditions that are bounded below and above by Gaussians. The authors used logarithmic Sobolev inequalities and entropy methods. In \cite{Herau.Niear}, exponential decay was obtained also for faster growing potentials and more general initial conditions. That proof is based on hypoellipticity techniques.  By using hypoelliptic methods, Villani proved exponential convergence results in $H^1(\mathbb{R}^d, f_{\infty}):=\{g \in L^2(\mathbb{R}^d, f_{\infty}): |\nabla g| \in L^2(\mathbb{R}^d, f_{\infty})\}$ \cite[Theorem 35]{Vil} and in $L^2(\mathbb{R}^d, f_{\infty})$ \cite[Theorem 37]{Vil}. The main conditions in  Villani's theorems above, as well as in \cite{DMSch, Baudoin, BGH, H^k, BS, CLW}, 
  are the validity of the Poincar\'e inequality \eqref{Poincare}
 and  the criterion 
\begin{equation}\label{Villanis condition}
\exists \, \, C\geq 0\, \, : \, \,  \,  \, \, \, \, \, 
 \begin{small}\left|\left|\frac{\partial^2  V(x)}{\partial x^2}\right|\right|\end{small}\leq C(1+|\nabla V(x)|),\, \, \, \, \forall x \in \mathbb{R}^n,
\end{equation}
where \begin{small}$\displaystyle\left|\left|\frac{\partial^2  V(x)}{\partial x^2}\right|\right|$\end{small} denotes the Frobenius norm of \begin{small}$\displaystyle\frac{\partial^2  V(x)}{\partial x^2}.$\end{small}\\
When $\frac{\partial^2  V}{\partial x^2}$ is bounded, Villani also proved that the solution converges to  the steady state exponentially in the logarithmic entropy \cite[Theorem 39]{Vil}. This result was extended  in \cite{CAMZ} to potentials $V$ satisfying a weighted log-Sobolev inequality and the condition that $V^{-2\eta}\frac{\partial^2  V}{\partial x^2}$ is bounded for some $\eta\geq 0.$ Even though Villani's result allows for a general class of potentials, the growth condition \eqref{Villanis condition} is not satisfied by potentials with singularities. This type of potentials, such as Lennard-Jones type interactions with confinement, are considered in \cite{BGH} and their method relies on an explicit construction of a Lypunov function and Gamma calculus. In \cite{DMSch}, Dolbeault, Mouhot, and Schmeiser  developed a method to get exponential decay in $L^2$  for a large class of linear kinetic equations, including \eqref{KFP}. Their method was also used to study the long time behavior of \eqref{KFP} when the potential $V$ is zero or grows slowly as $|x|\to \infty,$ see \cite{HnoC, HweakC}. Based on a probabilistic coupling method, Eberle, Guillin, and Zimmer \cite{Eber} obtained an exponential decay result in Wasserstein distance.

 The associated semigroup of the kinetic Fokker-Planck equation has instantaneous regularizing properties  which is called \textit{hypoellipticity} \cite{hypoell}. This hypoelliptic regularization  is obvious when the confining potential $V$ is zero or quadratic as  the fundamental solution can be explicitly computed (see \cite{Kol}, \cite{hypoell}). For potentials such that $\frac{\partial^2  V}{\partial x^2}$ is bounded, H\'erau \cite{Herau} obtained  short time estimates for a $L^2(\mathbb{R}^d, f_{\infty})\to H^1(\mathbb{R}^d, f_{\infty})$ regularization by constructing a suitable Lyapunov functional.  Based on interpolation inequalities and a system of differential inequalities, Villani \cite[Appendix A.21]{Vil} extended H\'erau's result for potentials satisfying \eqref{Villanis condition}.

  We provide  a new method to establish exponential decay of the solution to the steady state  in $H^1(\mathbb{R}^d, f_{\infty}) $ for a wide class of potentials: Our method extends \cite{Vil, FAS, AE} by allowing for more general Lyapunov functionals. Generalizing the previous approaches, the weight matrix in the dissipation functional (a generalized Fisher information) may now depend on $x$ and $v.$ This
   leads to a new criterion  on the potential $V.$ 
   {For this entropy method we need the time derivative of the dissipation functional, but we also provide its $(x,v)$--pointwise analog, in the spirit of the \emph{Gamma calculus} \cite{Baudoin}.}
   We provide a formula to estimate easily  the exponential decay rate depending on the parameters of the equation, the constants appearing in the Poincar\'e inequality \eqref{Poincare} and the growth condition on the potential (see \eqref{Condition1} below). As a test of the effectiveness of our method, we show that our estimate on the decay rate is sharp when the potential is a quadratic polynomial. Moreover, our method lets us  obtain estimates on the hypoelliptic regularization for potentials that are more general than in \cite{Herau}.
  
  The organization of this paper is as follows. In Section 2, we define the assumptions on the potential, state the main results, and present concrete examples of such potentials. In Section 3, we present the intuition and explain our method. Section 4 contains  important lemmas about matrix inequalities which are important to construct  suitable Lyapunov functionals. The final section presents the proof of the main results.

\section{Main results }  
    We make the following assumptions.
    \begin{assumption}\label{Assum:Poincare}  There exists a constant $C_{PI}>0$ such that the Poincar\'e inequality 
    \begin{equation}\label{Poincare}
  \int_{\mathbb{R}^{2n}} h^2 f_{\infty}dxdv-\left(\int_{\mathbb{R}^{2n}} h f_{\infty}dxdv\right)^2\leq \frac{1}{C_{PI}}\int_{\mathbb{R}^{2n}} (|\nabla_x h|^2+|\nabla_v h|^2)f_{\infty}dxdv\, \, 
\end{equation}
holds for all $h \in H^1(\mathbb{R}^d, f_{\infty}).$
\end{assumption}
Sufficient conditions on the potential appearing in  $f_{\infty}$ so that the Poincar\'e inequality holds, e.g. the Bakry-Emery criterion, are presented in \cite[Chapter 4]{AGMar}. 
  \begin{assumption}\label{A1} There are constants $c\in \mathbb{R}$ and $\tau\in [0,\nu) $ such that the following $\mathbb{R}^{m\times m}$  matrix, $ m\colonequals n(n+1),$
 \begin{equation}\label{Condition1}  \begin{pmatrix}
 \nu\left(\frac{\partial^2  V(x)}{\partial x^2}+cI\right)&0&...&0&-\frac{1}{2}\frac{\partial^2(\partial_{x_1}V(x))}{\partial x^2} \\
 0&\nu \left(\frac{\partial^2  V(x)}{\partial x^2}+cI\right)&...&0&-\frac{1}{2}\frac{\partial^2(\partial_{x_2}V(x))}{\partial x^2}\\ 
 ...&...&...&...&...\\
 0&0&...&\nu \left(\frac{\partial^2  V(x)}{\partial x^2}+cI\right)&-\frac{1}{2}\frac{\partial^2(\partial_{x_n}V(x))}{\partial x^2}\\
 -\frac{1}{2}\frac{\partial^2(\partial_{x_1}V(x))}{\partial x^2}&-\frac{1}{2}\frac{\partial^2(\partial_{x_2}V(x))}{\partial x^2}&...&-\frac{1}{2}\frac{\partial^2(\partial_{x_n}V(x))}{\partial x^2}&\frac{\tau \nu}{2\sigma}\left(\frac{\partial^2  V(x)}{\partial x^2}+cI\right)
 \end{pmatrix}
 \end{equation}
 is positive semi-definite for all $x \in \mathbb{R}^n,$ where $I\in \mathbb{R}^{n \times n}$ denotes the identity matrix. 
 \end{assumption}
  Roughly speaking, Assumption \ref{A1} essentially means that the second order derivatives of $V$ control the third order ones. It implies that $\frac{\partial^2 V(x)}{\partial x^2}+ cI$ is  positive semi-definite for all $x \in \mathbb{R}^n,$ and hence the eigenvalues of $\frac{\partial^2 V(x)}{\partial x^2}$ are uniformly bounded from below. We note that, in contrast to the Bakry-Emery strategy \cite{BE1}, the eigenvalues here may take negative values.
  
 Let  $\displaystyle \alpha(x)\in \mathbb{R}$  denote the smallest eigenvalue of $\frac{\partial^2 V(x)}{\partial x^2}$ at $x \in \mathbb{R}^n.$ Then the following condition implies Assumption \ref{A1}. For its proof see  Appendix \ref{6.1}.\\
\textbf{Assumption 2.{2'.}}\label{A2}
 \textit{There are constants $c\in \mathbb{R}$ and $\tau\in [ 0,\nu) $ such that } $\frac{\partial^2  V(x)}{\partial x^2}+cI$ \textit{is positive semi-definite and}\footnote{For two matrices $A$ 
 and $B\in \mathbb{R}^{n \times n} ,$  $A\geq B$ means that $A-B$ is positive semi-definite.}
  \begin{equation}\label{Condition2}
  -\sqrt{\frac{2\tau \nu^2 }{n\sigma}}(\alpha(x)+ c)I\leq  \frac{\partial^2 (\partial_{x_i}V(x))}{\partial x^2} \leq \sqrt{\frac{2\tau \nu^2}{n \sigma}}(\alpha(x)+ c)I
  \end{equation}
  \textit{ for all  }  $ x \in \mathbb{R}^n$ \textit{ and} $ i\in \{1,...,n\}.$  \\

  We denote
  \begin{equation}\label{alpha-zero}\displaystyle \alpha_0\colonequals \inf_{x \in \mathbb{R}^n }\alpha(x)
  \end{equation} {and assume in the sequel that $\alpha_0>- \infty.$ Hence Assumption \ref{A1} can only hold for some $c\geq-\alpha_0.$}  
  
In the following results, we require that $\frac{f_0}{f_{\infty}}\in L^2(\mathbb{R}^{2n}, f_{\infty})$ which implies  $f_0\in  L^{1}(\mathbb{R}^{2d})$ because of the H\"older inequality  $ \int_{\mathbb{R}^{2n}}f_0 dxdv\leq\sqrt{(\int_{\mathbb{R}^{2n}}\frac{f^2_0}{f_{\infty}}dxdv)(\int_{\mathbb{R}^{2n}}f_{\infty}dxdv)} $   and $\int_{\mathbb{R}^{2n}} f_{\infty}dxdv=1.$
We now state our first result, i.e. exponential decay of a functional that is a linear combination of the weighted $L^2-$norm and a Fisher information-type functional:
  \begin{theorem}\label{Main}
  Let $V $ be a $C^{\infty}$ potential in $\mathbb{R}^n$ satisfying Assumptions \ref{Assum:Poincare} and \ref{A1}. Let $C_{PI},$ $c,$ $\tau,$ and $\alpha_0$ be the constants in \eqref{Poincare}, \eqref{Condition1}, and \eqref{alpha-zero}. Suppose  the initial data $f_0$  satisfies   {\small$\displaystyle \frac{f_0}{f_{\infty}}\in H^1(\mathbb{R}^{2n}, f_{\infty})$} and   \begin{small}$ \displaystyle \int_{\mathbb{R}^{2n}} \nabla^T_v \left(\frac{f_0}{f_{\infty}}\right)\frac{\partial^2V}{\partial x^2}\nabla_v \left(\frac{f_0}{f_{\infty}}\right) f_{\infty}dxdv<\infty.$\end{small}  Then there are explicitly computable constants $C>0$ and $\lambda>0$ (independent of $f_0$) such that the solution $f(t)$ of \eqref{KFP} satisfies \begin{multline}\label{main.est} 
\int_{\mathbb{R}^{2n}}\left(\frac{f(t)}{f_{\infty}}-1\right)^2 f_{\infty}dxdv +\int_{\mathbb{R}^{2n}}\left|\nabla_{x}\left(\frac{f(t)}{f_{\infty}}\right)\right|^2 f_{\infty}dxdv\\+\int_{\mathbb{R}^{2n}}\nabla^T_{v}\left(\frac{f(t)}{f_{\infty}}\right)\left(\frac{\partial^2 V}{\partial x^2}+(1-\alpha_0)I\right)\nabla_{v}\left(\frac{f(t)}{f_{\infty}}\right)f_{\infty} dxdv \\
   \leq C e^{-2\lambda t} \left[\int_{\mathbb{R}^{2n}}\left(\frac{f_0}{f_{\infty}}-1\right)^2 f_{\infty}dxdv+\int_{\mathbb{R}^{2n}}\left|\nabla_{x}\left(\frac{f_0}{f_{\infty}}\right)\right|^2 f_{\infty}dxdv\right.\\+ \left.\int_{\mathbb{R}^{2n}}\nabla^T_{v}\left(\frac{f_0}{f_{\infty}}\right)\left(\frac{\partial^2 V}{\partial x^2}+ (1-\alpha_0) I\right)\nabla_{v}\left(\frac{f_0}{f_{\infty}}\right)f_{\infty}dxdv\right]
  \end{multline}
  for all $t\geq 0.$ Moreover, we have: 
 \begin{enumerate}
 \item[(a)]
  if {\small {$\displaystyle  \alpha_0>\frac{\nu^2}{4},\, \, \, c\leq-\frac{\nu^2}{4}, $}} then {\small $\displaystyle 2\lambda={\nu-\tau};$}
\item[(b)]
if {\small $\displaystyle c=-\alpha_0=-\frac{\nu^2}{4},$}  then {\small $\displaystyle 2\lambda={\nu-\tau-\varepsilon}$} for any {\small $\displaystyle \varepsilon\in (0,\nu-\tau);$}  
  \item[(c)]
   if {\small $\displaystyle c >-\frac{\nu^2}{4}, \,\, c+2\alpha_0> \frac{\nu^2}{4},$} then  {\small $$\displaystyle 2\lambda=\begin{cases} \displaystyle   \nu-\tau-\frac{c+\frac{\nu^2}{4}}{\sqrt{c+\alpha_0}}  & \text{ if } \nu-\tau\geq A^{-1}_1+\frac{c+\frac{\nu^2}{4}}{\sqrt{c+\alpha_0}}  \\  \displaystyle 
\frac{(\nu-\tau)\sqrt{c+\alpha_0}-(c+\frac{\nu^2}{4}) (\sqrt{1+s_1^2}-s_1)}{{\sqrt{c+\alpha_0}+A_1s_1(c+\frac{\nu^2}{4}})  } & \text{ if }  \nu-\tau< A^{-1}_1+\frac{c+\frac{\nu^2}{4}}{\sqrt{c+\alpha_0}}   \end{cases},$$}
where {\small $ \displaystyle  A_1:=\frac{1+ \frac{\nu^2}{4}+c+\alpha_0+ \sqrt{( \frac{\nu^2}{4}+c+\alpha_0-1)^2+\nu^2}}{2\sigma C_{PI}},$\\ $s_1:=\begin{cases} 
 \frac{A^2_1(c+\frac{\nu^2}{4})^2-c-\alpha_0}{2A_1(c+\frac{\nu^2}{4}) \sqrt{c+\alpha_0}}& \text{ if }  (\nu-\tau)A_1=2  \\  
\frac{1}{\nu-\tau}\left[\left|\frac{(\nu-\tau)A_1-1}{(\nu-\tau)A_1-2}\right|\sqrt{\frac{(c+\frac{\nu^2}{4})^2}{c+\alpha_0}+2(\nu-\tau)A^{-1}_1-(\nu-\tau)^2}-\frac{c+\frac{\nu^2}{4}}{((\nu-\tau)A_1-2)\sqrt{c+\alpha_0}}\right] & \text{ if }  (\nu-\tau)A_1\neq 2  
  \end{cases};$}
 \item[(d)]
  if {\small $\displaystyle c >-\frac{\nu^2}{4}, \,\, c+2\alpha_0\leq \frac{\nu^2}{4},$} then  
{\small $$\displaystyle 2\lambda=\begin{cases}\displaystyle  \nu-\tau-\sqrt{{\nu^2}-4\alpha_0} & \text{ if } \nu-\tau\geq A^{-1}_2+\sqrt{{\nu^2}-4\alpha_0} \\ \displaystyle 
\frac{\nu-\tau-\sqrt{{\nu^2}-4\alpha_0} (\sqrt{1+s_2^2}-s_2)}{1+A_2s_2\sqrt{{\nu^2}-4\alpha_0} } & \text{ if }  \nu-\tau< A^{-1}_2+\sqrt{{\nu^2}-4\alpha_0}  \end{cases},$$}
where {\small $\displaystyle A_2:=\frac{1+ \frac{\nu^2}{2}-\alpha_0+ \sqrt{( \frac{\nu^2}{2}-\alpha_0-1)^2+\nu^2}}{2\sigma C_{PI}}, $\\ $ \displaystyle s_2:=\begin{cases}
 \frac{A^2_2 ({\nu^2-4\alpha_0})-1}{2A_2\sqrt{{\nu^2}-4\alpha_0}}& \text{ if }  (\nu-\tau)A_2=2  \\  
\frac{1}{\nu-\tau}\left[\left|\frac{(\nu-\tau)A_2-1}{(\nu-\tau)A_2-2}\right|\sqrt{{{\nu^2}-4\alpha_0}+2(\nu-\tau)A^{-1}_2-(\nu-\tau)^2}-\frac{\sqrt{{\nu^2}-4\alpha_0}}{((\nu-\tau)A_2-2)}\right] & \text{ if }  (\nu-\tau)A_2\neq 2  
  \end{cases};$}
\item[(e)]
 if $V(x)$ is a quadratic polynomial of $x$ and $\frac{\partial^2 V}{\partial x^2}$ is positive definite, then  Assumptions \ref{Assum:Poincare} and \ref{A1} are satisfied  with $\tau=0,$ $c=-\alpha_0$  (this rules out the conditions in the case of (c)). Moreover, the decay rates $\lambda$ in (a) and  (d) are  sharp and, in the case of (d),  $\nu\geq {A^{-1}_2}+\sqrt{{\nu^2}-4\alpha_0} $ holds and so $2\lambda= \nu-\sqrt{{\nu^2}-4\alpha_0}.$ In the case of (b), the decay rate $2\lambda=\nu-\varepsilon$ is sharp in the sense that  \eqref{main.est}  holds with the rate $2\lambda=\nu-\varepsilon$  for any small fixed $\varepsilon\in (0,\nu),$ but it does not hold with the rate  $2\lambda=\nu.$ 

 \end{enumerate}
\end{theorem}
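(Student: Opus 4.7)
The plan is to introduce the relative density $h:=f/f_{\infty}$, which satisfies \eqref{KFP2}, and to work with a modified Lyapunov functional of the form
\begin{equation*}
\Phi(h):=\|h-1\|^2_{L^2(f_{\infty})}+\int_{\bbR^{2n}}\bigl(\nabla_x h,\,\nabla_v h\bigr)\,P(x)\,\bigl(\nabla_x h,\,\nabla_v h\bigr)^T f_{\infty}\,dx\,dv,
\end{equation*}
where $P(x)$ is a symmetric $2n\times 2n$ weight matrix with blocks
\begin{equation*}
P(x)=\begin{pmatrix} a I & b I \\ b I & \tfrac{1}{\beta}\bigl(\tfrac{\partial^2 V}{\partial x^2}+cI\bigr)+\gamma I\end{pmatrix}.
\end{equation*}
The free constants $a,b,\beta,\gamma$ will be tuned in each case (a)--(d). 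The essential novelty with respect to \cite{Vil,FAS,AE} is that the lower-right $v$-block of $P$ depends on $x$ through $\frac{\partial^2 V}{\partial x^2}$; if $a,\beta,\gamma>0$ are such that $P$ is uniformly bounded above and below by multiples of $\mathrm{diag}(I,\,\frac{\partial^2 V}{\partial x^2}+(1-\alpha_0)I)$, then $\Phi(h)$ is equivalent to the functional that appears on both sides of \eqref{main.est}, and it suffices to prove $\Phi(h(t))\le C e^{-2\lambda t}\Phi(h_0)$.

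Next I would compute $\tfrac{d}{dt}\Phi(h(t))$. The $L^2$-part yields the standard dissipation $-2\sigma\|\nabla_v h\|^2_{L^2(f_{\infty})}$. For the weighted Fisher part, I would apply $\nabla_x$ and $\nabla_v$ to \eqref{KFP2}, use the commutators $[\nabla_v,\,v\!\cdot\!\nabla_x]=\nabla_x$ and $[\nabla_x,\,\nabla V\!\cdot\!\nabla_v]=\frac{\partial^2 V}{\partial x^2}\nabla_v$, and integrate by parts against $f_{\infty}$. Three types of terms appear: (i) a dissipative Hessian quadratic form $-2\sigma\int\|\nabla_v\nabla h\|^2_P f_{\infty}$; (ii) quadratic forms in $(\nabla_x h,\nabla_v h)$ with coefficient matrices built from $\nu$, $\frac{\partial^2 V}{\partial x^2}+cI$, and $a,b,\beta,\gamma$; and (iii) triple-product terms $\frac{\partial^2(\partial_{x_i}V)}{\partial x^2}\cdot\nabla_x h\otimes\nabla_v h$ produced when the transport $v\!\cdot\!\nabla_x$ differentiates the $x$-dependent $vv$-block of $P$.

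The main obstacle is precisely (iii): these third-order-derivative terms must be absorbed by (ii). This is where Assumption \ref{A1} enters in its sharp form: viewing (iii) as a bilinear pairing between $\nabla_v h$ (one copy for each coordinate of $x$) and $\nabla_v h$, the positive semidefiniteness of the $m\times m$ matrix in \eqref{Condition1} states exactly that the third-order cross terms are pointwise dominated by $\frac{\nu\tau}{2\sigma}\|\nabla_v h\|^2_{\partial^2 V/\partial x^2+cI}$, leaving a residual friction coefficient of $\nu-\tau>0$ in front of the leading $v$-block of (ii). Combining this pointwise matrix bound with the Poincar\'e inequality \eqref{Poincare} applied to $h-1$ (to absorb $\|h-1\|^2_{L^2(f_{\infty})}$ into a multiple of $\|\nabla_x h\|^2+\|\nabla_v h\|^2$), I would obtain a scalar matrix inequality of the form $\tfrac{d}{dt}\Phi\le -2\lambda\,\Phi$. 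The four cases (a)--(d) then arise from optimizing $(a,b,\beta,\gamma)$ against the remaining positivity constraints on the resulting coercivity matrix: this is an eigenvalue optimization whose critical point either lies in the interior of the admissible set (giving the first branch in each case, with rate $\nu-\tau$ minus a Hessian-dependent correction) or on its boundary (giving the second branch, indexed by whether $(\nu-\tau)A_j\gtrless 2$). The auxiliary matrix lemmas of Section 4 are designed to carry out this optimization cleanly.

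Finally, for (e) I would observe that when $V$ is a strictly convex quadratic polynomial, $\frac{\partial^2 V}{\partial x^2}$ is a constant positive-definite matrix and all third derivatives vanish, so \eqref{Condition1} holds with $\tau=0$ and any $c\ge -\alpha_0$; in particular the choice $c=-\alpha_0$ is admissible, ruling out case (c) whose sign assumption $c>-\nu^2/4$ combined with $c+2\alpha_0>\nu^2/4$ becomes incompatible with $c=-\alpha_0$ there. Sharpness in (a) and (d) follows by comparing the rate $\lambda$ with the real part of the spectrum of the underlying degenerate Ornstein--Uhlenbeck drift matrix $\bigl(\begin{smallmatrix}0 & -I\\ \partial^2 V/\partial x^2 & \nu I\end{smallmatrix}\bigr)$, whose eigenvalues solve $\mu^2-\nu\mu+\alpha=0$ for each Hessian eigenvalue $\alpha$; the minimal real part $\tfrac{\nu}{2}-\tfrac{1}{2}\sqrt{\nu^2-4\alpha_0}$ matches the claimed $2\lambda$. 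The borderline case (b), $c=-\alpha_0=-\nu^2/4$, is exactly where this drift matrix becomes defective with a Jordan block at eigenvalue $-\nu/2$, which both prevents the pure exponential rate $2\lambda=\nu$ (producing the $(1+t)$ prefactor in the $L^2$-norm described in the abstract) and permits any rate $2\lambda=\nu-\varepsilon$, in line with (b).
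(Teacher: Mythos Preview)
Your overall strategy—an $x$-dependent weight matrix $P$ with $vv$-block proportional to $\frac{\partial^2 V}{\partial x^2}+cI$, differential inequality for the Lyapunov functional, Poincar\'e to close the loop—matches the paper. But there is a genuine gap in how you handle the third-order terms (your item (iii)), and it affects both the identification of the bad term and the mechanism by which Assumption~\ref{A1} controls it.

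When the transport $v\cdot\nabla_x$ hits the $vv$-block of $P$, the resulting term is not of the form $\nabla_x h\otimes\nabla_v h$; it is
\[
\int_{\mathbb{R}^{2n}} u_2^T\Bigl(\textstyle\sum_k v_k\,\frac{\partial^2(\partial_{x_k}V)}{\partial x^2}\Bigr)u_2\,f_\infty\,dxdv,
\]
i.e.\ a $\nabla_v h$--$\nabla_v h$ pairing with an unbounded factor $v$. This cannot be absorbed by the first-order quadratic forms (ii). The paper's key step is an integration by parts using $v_k f_\infty=-\frac{\sigma}{\nu}\partial_{v_k}f_\infty$, which converts the $v_k$-weight into a $\partial_{v_k}$ and produces
\[
\frac{2\sigma}{\nu}\sum_{k=1}^n\int_{\mathbb{R}^{2n}}(\partial_{v_k}u_2)^T\,\frac{\partial^2(\partial_{x_k}V)}{\partial x^2}\,u_2\,f_\infty\,dxdv.
\]
This is then absorbed not by (ii) but by the Hessian dissipation (i)—specifically its $u_2$-part $\sum_k(\partial_{v_k}u_2)^T(\frac{\partial^2 V}{\partial x^2}+cI)\partial_{v_k}u_2$—together with a $\tau$-fraction of the $S$-functional itself. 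The $(n+1)$-block structure of the matrix in \eqref{Condition1} is exactly tailored to this: the first $n$ block-rows correspond to $\partial_{v_1}u_2,\ldots,\partial_{v_n}u_2$ and the last to $u_2$. Without this IBP trick you cannot see why \eqref{Condition1} has the form it does, nor why the factor $\frac{\tau\nu}{2\sigma}$ appears in its last diagonal block.

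A second, smaller omission: in cases (c)--(d) the paper does not merely optimize $(a,b,\beta,\gamma)$ in $P$; it also uses the $L^2$-dissipation $-2\sigma\gamma\|\nabla_v h\|^2$ coming from the $\gamma\|h-1\|^2$ part of $\Phi$ to add a term $\gamma D$ to the matrix inequality, yielding $QP+PQ^T+\gamma D\ge(\nu-\delta)P$ (Lemma~\ref{gen.mat.inq}). This extra degree of freedom is what makes the method work when $\alpha_0\le\nu^2/4$, and the two-branch formulas in (c)--(d) come from optimizing $\Lambda(a,s)=\frac{\nu-\tau-\delta(a,\gamma)}{1+\gamma/(2\eta(a)C_{PI})}$ over the substitution variable $s=\frac{\gamma\sigma}{4a\sqrt{a+\alpha_0-\nu^2/4}}$, not from an abstract boundary/interior dichotomy.
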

\begin{Remark} 
\begin{enumerate}
\item  It is possible to make weaker regularity hypothesis on the potential $V,$ but we maintain the assumption that $V \in C^{\infty} $ to keep the presentation simple.

\item  Since  $\frac{\partial^2 V}{\partial x^2}+ (1-\alpha_0) I \geq I,$ \eqref{main.est} implies that the solution converges exponentially to the steady state  in $H^1(\mathbb{R}^{2n}, f_{\infty}).$    If the eigenvalues of  $\frac{\partial^2 V}{\partial x^2}$ are uniformly bounded, then \eqref{main.est} is equivalent to the exponential decay of the solution to the steady state in $H^1(\mathbb{R}^{2n}, f_{\infty}).$ Due to the Poincar\'e inequality \eqref{Poincare}, the $L^2-$term on the right hand side of \eqref{main.est} could be omitted. 
\item  If $V $ satisfies Assumption \ref{A1} with some constants  $c\in \mathbb{R} $ and $\tau \in [0,\nu),$ then $V $ also satisfies Assumption \ref{A1} with any $\tilde{c}\geq c$ and $\tilde{\tau}\in [\tau,\nu).$ Therefore, these constants are not unique. But the exponential decay rate $\lambda$ obtained in Theorem \ref{Main} depends on the choice of $c$ and $ \tau.$ To obtain a better rate, one has to optimize $\lambda=\lambda(c,\tau)$   with respect to all $c$ and $\tau$ satisfying Assumption \ref{A1}.
\item In Theorem \ref{Main} $(b),$  the constant $C$ in \eqref{main.est} depends on $\varepsilon,$ and $C=C(\varepsilon)\to \infty$ as $\varepsilon \to 0.$ 

\item  The highest exponential rate is $\frac{\nu}{2}$ which can be attained by the quadratic potentials $V$ with $\frac{\partial^2 V}{\partial x^2}\geq \frac{\nu^2}{4}I.$ 
\end{enumerate}
\end{Remark}

 When $V$ is a quadratic polynomial as in Theorem \ref{Main} $(e),$ we prove  the following sharp estimates.
\begin{proposition}\label{prop.}
Let $V$ be a quadratic polynomial and $\frac{\partial^2 V}{\partial x^2}$ be positive definite.  Let $\alpha_0>0$ be the smallest eigenvalue  of  $\frac{\partial^2 V}{\partial x^2},$ then\footnote{For functions $\varphi=\varphi(t)$ and $ \phi=\phi(t),$   $\varphi \asymp \phi$ as $t\to \infty$ means  $\varphi=\mathcal{O}(\phi)$ and $\phi=\mathcal{O}(\varphi)$ as $t\to \infty.$}   
\begin{equation}\label{l^2 t^2}
\sup_{1\neq \frac{f_0}{f_{\infty}} \in L^2(\mathbb{R}^d, f_{\infty})}\frac{||f(t)/f_{\infty}-1||_{L^2(\mathbb{R}^d, f_{\infty})}}{||f_0/f_{\infty}-1||_{L^2(\mathbb{R}^d, f_{\infty})}}\asymp \begin{cases}
e^{-\frac{\nu}{2}t},& \text{ if } \alpha_0>\frac{\nu^2}{4} \\
(1+ t)e^{-\frac{\nu}{2}t},& \text{ if } \alpha_0=\frac{\nu^2}{4}\\
 e^{-{\frac{\nu-\sqrt{\nu^2-4\alpha_0}}{2}t}},& \text{ if } \alpha_0<\frac{\nu^2}{4} 
\end{cases} \, \, \, \, \, \text{ as } \, \, t\to \infty.
   \end{equation}
 
\end{proposition}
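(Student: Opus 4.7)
The plan is to diagonalize the problem into decoupled one-dimensional Ornstein--Uhlenbeck type systems and read off the sharp $L^2(f_\infty)$ decay rate from the spectrum of an explicit $2\times 2$ matrix. First, I would shift $x$ so that $\nabla V$ vanishes at the minimum of $V$ and absorb the resulting additive constant into the normalization $c_V$ of $f_\infty$; this reduces the setting to $V(x) = \tfrac12 x^T M x$ with $M = \partial^2 V/\partial x^2$ symmetric positive definite. Diagonalizing $M = U^T \mathrm{diag}(\mu_1,\dots,\mu_n) U$ with $U$ orthogonal and $\alpha_0 = \min_i \mu_i$, the joint orthogonal change of variables $(x,v)\mapsto(Ux,Uv)$ preserves the form of \eqref{KFP} and the weighted $L^2(f_\infty)$ norm, while decoupling the equation into $n$ independent one-dimensional kinetic Fokker--Planck equations with potentials $\tfrac{\mu_i}{2}y_i^2$. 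Since $f_\infty$ and the $L^2$ norm factorize over coordinates, the supremum-of-ratios on the left of \eqref{l^2 t^2} equals the maximum over $i$ of the analogous one-dimensional suprema, and the slowest mode corresponds to $\mu = \alpha_0$.

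Next I would carry out the spectral analysis of the one-dimensional problem with $V(x) = \tfrac{\mu}{2}x^2$. Setting $h = f/f_\infty$, one has $\partial_t h = -Lh$ with $L$ the generator of \eqref{KFP2}. Expanding $h$ in the orthonormal tensor-product Hermite basis of $L^2(f_\infty)$ (rescaled by $\sqrt{\nu/\sigma}$), a direct computation shows that $L$ preserves each finite-dimensional subspace $\mathcal H_N$ of Hermite polynomials of total degree $N\ge 0$. On $\mathcal H_1$, spanned by the rescaled $x$ and $v$, $L$ is represented by a $2\times 2$ matrix with characteristic polynomial $\lambda^2-\nu\lambda+\mu$ and eigenvalues $\lambda_\pm = \tfrac{\nu\pm\sqrt{\nu^2-4\mu}}{2}$. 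On $\mathcal H_N$ the spectrum (with multiplicity arising from the symmetric-tensor structure) is $\{k\lambda_++\ell\lambda_-:k,\ell\in\mathbb{N}_0,\ k+\ell=N\}$, so higher levels decay strictly faster than level one and the sharp $L^2$ rate is dictated by $\mathcal H_1$. Specializing to $\mu=\alpha_0$: when $\alpha_0>\nu^2/4$ the $\lambda_\pm$ are complex conjugates with real part $\nu/2$, giving pure exponential decay $e^{-\nu t/2}$; when $\alpha_0<\nu^2/4$ both eigenvalues are distinct real positive numbers and the smaller one $\lambda_-=(\nu-\sqrt{\nu^2-4\alpha_0})/2$ governs the decay; when $\alpha_0=\nu^2/4$ the matrix $L|_{\mathcal H_1}$ is a genuine $2\times 2$ Jordan block at $\nu/2$, so $e^{-Lt}|_{\mathcal H_1}$ is similar to $e^{-\nu t/2}\begin{pmatrix}1&-t\\0&1\end{pmatrix}$ with operator norm $\asymp (1+t)e^{-\nu t/2}$. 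The upper bound $\lesssim$ in each case follows from this spectral representation, and the matching lower bound $\gtrsim$ is obtained by choosing $f_0/f_\infty-1$ as a level-$1$ eigenfunction of $L$ (an explicit linear combination of $x$ and $v$) when $\alpha_0\neq\nu^2/4$, respectively as a generalized eigenvector exposing the Jordan block when $\alpha_0=\nu^2/4$.

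The main obstacle is the defective case $\alpha_0=\nu^2/4$: Theorem \ref{Main}(e) only yields the sub-optimal rate $e^{-(\nu-\varepsilon)t/2}$ for every $\varepsilon\in(0,\nu)$, whereas the sharp answer is $(1+t)e^{-\nu t/2}$. Closing this gap requires the Jordan-block analysis above, and the sharpness of the $(1+t)$ prefactor hinges on selecting initial data whose level-$1$ projection does not lie in the genuine eigenspace of $L|_{\mathcal H_1}$, so that the off-diagonal Jordan entry of $e^{-Lt}|_{\mathcal H_1}$ actually contributes.
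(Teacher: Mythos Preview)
Your proposal is correct and the underlying mechanism is the same as the paper's: both reduce the $L^2(f_\infty)$ propagator norm to the operator norm of a finite matrix exponential and then read off the three cases from the eigenvalues $\lambda_\pm=\tfrac{\nu\pm\sqrt{\nu^2-4\alpha_0}}{2}$ and the Jordan structure at $\alpha_0=\nu^2/4$. The organization differs, however. The paper invokes Theorem~\ref{prop.norm} (proved in \cite{prop}) as a black box to obtain directly
\[
\sup_{1\ne f_0/f_\infty}\frac{\|f(t)/f_\infty-1\|}{\|f_0/f_\infty-1\|}=\|e^{-tK^{-1/2}(D+R)K^{-1/2}}\|_{\mathbb R^{2n}\to\mathbb R^{2n}},
\]
and then analyzes the full $2n\times2n$ matrix. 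You instead diagonalize $M$ first to decouple into $n$ one-dimensional problems, and for each of those you effectively \emph{re-prove} the one-dimensional instance of Theorem~\ref{prop.norm} by hand via the Hermite grading $\mathcal H_N$ and the symmetric tensor power bound $\|e^{-tL}|_{\mathcal H_N}\|\le\|e^{-tL}|_{\mathcal H_1}\|^N$. Your route is more self-contained and makes the second-quantization structure explicit, while the paper's is shorter by outsourcing that structure to the cited result. One point that deserves a sentence of justification in your write-up is the equality ``supremum over the full problem $=\max_i$ of the one-dimensional suprema'': this uses that $e^{-tL}$ is a contraction on $(\text{constants})^\perp$ (from \eqref{t.der.L^2}), so that each one-dimensional factor has norm $\le 1$ and the tensor-product norm is maximized when all but one $N_i$ vanish.
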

  We shall use this proposition to prove the  sharpness of the decay rates in Theorem \ref{Main} $(e).$ When $V$ is a quadratic polynomial and $ -\alpha_0=-\frac{\nu^2}{4}\equalscolon c, $ Theorem \ref{Main} $(e)$ shows that the decay  in \eqref{main.est} can be $e^{-(\nu-\varepsilon)t}$ for any small  fixed $\varepsilon\in (0,\nu),$ but it can not be $e^{-\nu t}.$ In this case, it is natural to expect a decay between $e^{-\nu t}$  and $e^{-(\nu-\varepsilon)t}:$ Proposition \ref{prop.} shows that this is indeed the case for the square of the $L^2-$norm, with the decay  $(1+ t)^2e^{-{\nu}t}.$  But an analogous extension of  this result  for the functional on the left hand side of \eqref{main.est} (i.e., to replace the  term $C e^{-(\nu-\varepsilon) t}$  with {$C(1+t)^2e^{-\nu t}$ }) has not been obtained so far.   
\begin{Remark}
  Under assumptions of Proposition \ref{prop.}, 
we can construct special solutions  $f_s(t)$ (see \cite[Section 6]{AE}) which satisfy  $$\frac{||f_s(t)/f_{\infty}-1||_{L^2(\mathbb{R}^d, f_{\infty})}}{||f_0/f_{\infty}-1||_{L^2(\mathbb{R}^d, f_{\infty})}}\asymp \begin{cases}
e^{-\frac{\nu}{2}t},& \text{ if } \alpha_0>\frac{\nu^2}{4} \\
(1+ t)e^{-\frac{\nu}{2}t},& \text{ if } \alpha_0=\frac{\nu^2}{4}\\
 e^{-{\frac{\nu-\sqrt{\nu^2-4\alpha_0}}{2}t}},& \text{ if } \alpha_0<\frac{\nu^2}{4} 
\end{cases} \, \, \, \, \, \text{ as } \, \, t\to \infty.$$ 

\end{Remark}
Our next result is
about the estimates on the hypoelliptic regularization.
\begin{theorem}\label{Hyp.ellip.}
Assume $V $ is a $C^{\infty}$ potential on $\mathbb{R}^n$ and there are constants $c \in \mathbb{R}$ and $\tau\geq 0$ such that the matrix \eqref{Condition1} is positive semi-definite for all $x \in \mathbb{R}^n.$ Suppose  the initial data $f_0$  satisfies    \begin{small}$\displaystyle \int_{\mathbb{R}^{2n}} \left(\frac{f_0}{f_{\infty}}-1 \right)^2 \left(\left|\left| \frac{\partial^2V}{\partial x^2}\right|\right|^2+1\right) f_{\infty}dxdv<\infty.$\end{small}  Then, for any $t_0>0,$ there are explicitly computable constants $C_1=C_1(t_0)>0$ and $C_2=C_2(t_0)>0$ (independent of $f_0$) such that the inequalities
\begin{equation}\label{Vhyp.ell.1}
 \displaystyle \int_{\mathbb{R}^{2n}}\left|\nabla_x \left(\frac{f(t)}{f_{\infty}}\right) \right|^2f_{\infty} dx dv \leq \frac{C_1}{t^{3}} \int_{\mathbb{R}^{2n}} \left(\frac{f_0}{f_{\infty}} -1\right)^2 \left(\left|\left|\frac{\partial^2V}{\partial x^2}\right| \right|^2+1\right) f_{\infty}dxdv
\end{equation}
and 
\begin{multline}\label{Vhyp.ell.2}
 \displaystyle \int_{\mathbb{R}^{2n}}\nabla_v^T \left(\frac{f(t)}{f_{\infty}}\right)\left(\frac{\partial^2V}{\partial x^2}+(1-\alpha_0)I\right)\nabla_v \left(\frac{f(t)}{f_{\infty}}\right)  f_{\infty} dx dv\\ \leq \frac{C_2}{t}\int_{\mathbb{R}^{2n}} \left(\frac{f_0}{f_{\infty}} -1\right)^2 \left(\left| \left|\frac{\partial^2V}{\partial x^2}\right|\right|^2+1\right) f_{\infty}dxdv
\end{multline}
hold for all $t \in (0,t_0].$ 
\end{theorem}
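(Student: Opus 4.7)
The plan is to extend the entropy method of Theorem~\ref{Main} by allowing \emph{time-dependent} coefficients, tuned to produce the two regularization rates $t^{-1}$ and $t^{-3}$. Setting $h\colonequals f/f_{\infty}$ (so that $h$ solves \eqref{KFP2}) and $D(x)\colonequals \frac{\partial^2V}{\partial x^2}+(1-\alpha_0)I$, I would work with the Lyapunov functional
\begin{align*}
\Phi(t)&\colonequals a_0\!\int_{\bbR^{2n}} (h-1)^2\Big(1+\Big\|\tfrac{\partial^2V}{\partial x^2}\Big\|^2\Big) f_{\infty}\,dxdv+a_1 t\!\int_{\bbR^{2n}} \nabla_v^T h\,D\,\nabla_v h\,f_{\infty}\,dxdv\\
&\quad +2a_2 t^2\!\int_{\bbR^{2n}} \nabla_v^T h\,\nabla_x h\,f_{\infty}\,dxdv+a_3 t^3\!\int_{\bbR^{2n}}|\nabla_x h|^2 f_{\infty}\,dxdv,
\end{align*}
for positive constants $a_0,\dots,a_3$ to be chosen in cascade. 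The polynomial scaling $t,t^2,t^3$ is the classical H\'erau--Villani choice for hypoelliptic regularization in kinetic variables: $v$-diffusion regularizes $\nabla_v h$ on a time scale $t$ and, via one commutator with $v\!\cdot\!\nabla_x$, $\nabla_x h$ on scale $t^3$. Because $\partial^2V/\partial x^2$ need not be bounded under hypothesis~\eqref{Condition1} (in contrast to Villani's \eqref{Villanis condition}), the scalar weight $1+\|\partial^2V/\partial x^2\|^2$ in the $L^2$-block has to be inserted from the outset so as to dominate the lower-order errors that will appear; this is precisely the weight that shows up on the initial data in the statement.

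The time derivative $\Phi'(t)$ is the sum of two contributions: the polynomial-weight derivatives, giving the \emph{positive} pieces $a_1\int\nabla_v^T h D\nabla_v h\,f_{\infty}+4a_2 t\int\nabla_v^T h\nabla_x h\,f_{\infty}+3a_3 t^2\int|\nabla_x h|^2 f_{\infty}$, and the dissipation produced by the kinetic Fokker-Planck operator on each block, already computed in the proof of Theorem~\ref{Main}. Schematically, $\frac{d}{dt}\int(h-1)^2 wf_{\infty}=-2\sigma\int w|\nabla_vh|^2 f_{\infty}+\int(h-1)^2 v\!\cdot\!\nabla_x w\,f_{\infty}$ for $w=1+\|\partial^2V/\partial x^2\|^2$; the weighted $\int\nabla_v^T h\,D\nabla_v h\,f_{\infty}$ produces a negative diffusion in $\nabla_v\nabla_v h$, a negative friction $-2\nu\int\nabla_v^T h D\nabla_v h\,f_{\infty}$, a cross term in $\langle\nabla_vh,\nabla_xh\rangle$, and an error bilinear in $\nabla_v h$ and the third derivatives of $V$, which is controlled precisely by the positive semi-definiteness of~\eqref{Condition1}; $\frac{d}{dt}\int\nabla_v^T h\nabla_xh\,f_{\infty}$ gives $-\|\nabla_xh\|^2$ plus manageable pieces; and $\frac{d}{dt}\int|\nabla_xh|^2 f_{\infty}$ produces a cross term with $\partial^2V\cdot\nabla_v h$ which is absorbed by Young against the Hessian-of-$h$ diffusion. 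The constants are chosen so that (i) $2\sigma a_0\,w(x)\,I\geq a_1 D(x)$ pointwise (possible since $\|D(x)\|\leq C\sqrt{w(x)}\leq Cw(x)$), making the velocity dissipation beat the positive $a_1$-contribution; (ii) $a_2\geq 3a_3$, so that $-a_2 t^2\|\nabla_xh\|^2$ beats $3a_3 t^2\|\nabla_xh\|^2$; (iii) $a_2^2\leq a_1 a_3$, so that the $(\nabla_vh,\nabla_xh)$ quadratic form inside $\Phi$ is positive semi-definite; and (iv) the transport error $\int(h-1)^2 v\!\cdot\!\nabla_xw\,f_{\infty}$ is absorbed by Cauchy-Schwarz against Gaussian moments of $f_{\infty}$ and a small portion of $2\sigma a_0\int w|\nabla_v h|^2 f_{\infty}$ (using \eqref{Poincare} to pass from $(h-1)^2$ to $|\nabla_v h|^2$). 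This yields $\Phi'(t)\leq 0$ on $(0,t_0]$.

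Integrating from $0^+$, one obtains $\Phi(t)\leq \lim_{s\to 0^+}\Phi(s)=a_0\int(h_0-1)^2(1+\|\partial^2V/\partial x^2\|^2)f_{\infty}\,dxdv$ for every $t\in(0,t_0]$. Coercivity of the quadratic form in $(\nabla_vh,\nabla_xh)$ encoded in $\Phi$ (by (iii)) then gives
\begin{equation*}
a_1 t\int_{\bbR^{2n}}\nabla_v^T h\,D\,\nabla_v h\,f_{\infty}\,dxdv\leq 2\Phi(t),\qquad a_3 t^3\int_{\bbR^{2n}}|\nabla_x h|^2 f_{\infty}\,dxdv\leq 2\Phi(t),
\end{equation*}
and dividing by $a_1 t$ and $a_3 t^3$ delivers \eqref{Vhyp.ell.2} and \eqref{Vhyp.ell.1}, with $C_2(t_0)=2a_0/a_1$ and $C_1(t_0)=2a_0/a_3$. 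The main technical obstacle is the simultaneous balancing of $a_0,\dots,a_3$: because $\partial^2V/\partial x^2$ is allowed to be unbounded, the error terms arising in $\Phi'$ split into two families which must be absorbed by two distinct mechanisms---the matrix condition~\eqref{Condition1} for the third derivatives of $V$, and the built-in weight $1+\|\partial^2V/\partial x^2\|^2$ on the $L^2$-block for the second-order ones. In contrast to Theorem~\ref{Main}, only the weaker hypothesis $\tau\geq 0$ is needed here, since we are seeking a finite-time functional bound rather than exponential contraction.
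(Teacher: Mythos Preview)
Your overall architecture---a time-dependent Lyapunov functional with the H\'erau--Villani $t,t^2,t^3$ weights and the Hessian-weight $1+\|\partial^2V/\partial x^2\|^2$ on the $L^2$ block---is essentially the paper's strategy. Steps (i)--(iii) are fine. The genuine gap is step~(iv), your treatment of the transport error
\[
\mathcal E(t):=\int_{\bbR^{2n}}(h-1)^2\,v\!\cdot\!\nabla_x w\,f_\infty\,dxdv,\qquad w=1+\Big\|\tfrac{\partial^2V}{\partial x^2}\Big\|^2.
\]
You propose to absorb $\mathcal E$ ``using \eqref{Poincare} to pass from $(h-1)^2$ to $|\nabla_vh|^2$''. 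This fails on two counts. First, the Poincar\'e inequality is \emph{not} among the hypotheses of Theorem~\ref{Hyp.ellip.} (cf.\ the remark following the theorem), so you cannot invoke it. Second, even if \eqref{Poincare} held, it controls $(h-1)^2$ by $|\nabla_xh|^2+|\nabla_vh|^2$, and the $|\nabla_xh|^2$ contribution carries only a $t^3$ coefficient in $\Phi$, hence cannot absorb an $O(1)$ error as $t\to0^+$; moreover you would need a \emph{weighted} Poincar\'e with the unbounded weight $w(x)$, which is not available.

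The actual obstruction is that $\nabla_xw$ contains third derivatives of $V$, and \eqref{Condition1} is precisely what bounds them. The paper first integrates by parts in $v$ via $v_kf_\infty=-\tfrac{\sigma}{\nu}\partial_{v_k}f_\infty$, converting $\mathcal E$ into $\tfrac{2\sigma}{\nu}\int(h-1)\,\nabla_vh\cdot\nabla_xw\,f_\infty$. Then, from the positive semi-definiteness of \eqref{Condition1} one extracts the trace inequality
\[
\Big|\partial_{x_k}\big\|\tfrac{\partial^2V}{\partial x^2}+cI\big\|^2\Big|
=2\Big|\mathrm{Tr}\big[(\tfrac{\partial^2V}{\partial x^2}+cI)\tfrac{\partial^2(\partial_{x_k}V)}{\partial x^2}\big]\Big|
\le 2\sqrt{\tfrac{2\tau\nu^2}{\sigma}}\,\big\|\tfrac{\partial^2V}{\partial x^2}+cI\big\|^2,
\]
so that $|\nabla_xw|\lesssim\sqrt{\tau}\,w$ pointwise. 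Young's inequality then yields
\[
\mathcal E(t)\le \sigma\!\int|\nabla_vh|^2w\,f_\infty+8\tau n\!\int(h-1)^2w\,f_\infty.
\]
The first term is absorbed by the dissipation exactly as in your step~(i); the second term, however, is \emph{not} absorbable---it forces growth of the weighted $L^2$ block at rate $8\tau n$. The paper handles this by placing an explicit damping factor $e^{-8\tau n t}$ in front of the weighted $L^2$ piece of the functional, so that the combination $e^{-8\tau n t}\int(h-1)^2w\,f_\infty$ is nonincreasing. Your $\Phi$ lacks this factor, and without it $\Phi'(t)\le0$ cannot hold in general when $\tau>0$. Once you replace the Poincar\'e step by the trace inequality above and insert the $e^{-8\tau n t}$ factor on the $L^2$ block, the rest of your argument goes through.
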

 
In Theorem \ref{Main} we assumed that the initial data $f_0/f_{\infty}$ is in $H^1(\mathbb{R}^d, f_{\infty}).$ If we use the estimates in Theorem \ref{Hyp.ellip.},  this condition can be relaxed:
\begin{corollary}\label{Cor.}
Let $V $ be a $C^{\infty}$ potential in $\mathbb{R}^n$ satisfying Assumptions \ref{Assum:Poincare} and \ref{A1}. Suppose  the initial data $f_0$  satisfies    \begin{small} $ \displaystyle \int_{\mathbb{R}^{2n}} \left(\frac{f_0}{f_{\infty}} -1\right)^2 \left(\left|\left|\frac{\partial^2V}{\partial x^2}\right|\right|^2+1\right) f_{\infty}dxdv<\infty.$\end{small} Then, for any $t_0>0,$ there is an explicitly computable constant $C=C(t_0)>0$ (independent of $f_0$)  such that 
 \begin{multline}\label{main.est.hyp} 
\int_{\mathbb{R}^{2n}}\left(\frac{f(t)}{f_{\infty}}-1\right)^2 f_{\infty}dxdv +\int_{\mathbb{R}^{2n}}\left|\nabla_{x}\left(\frac{f(t)}{f_{\infty}}\right)\right|^2 f_{\infty}dxdv\\+\int_{\mathbb{R}^{2n}}\nabla^T_{v}\left(\frac{f(t)}{f_{\infty}}\right)\left(\frac{\partial^2 V}{\partial x^2}+(1-\alpha_0)I\right)\nabla_{v}\left(\frac{f(t)}{f_{\infty}}\right)f_{\infty} dxdv \\
   \leq C e^{-2\lambda t} \int_{\mathbb{R}^{2n}} \left(\frac{f_0}{f_{\infty}} -1\right)^2 \left(\left|\left|\frac{\partial^2V}{\partial x^2}\right|\right|^2+1\right) f_{\infty}dxdv
  \end{multline}
  holds {for all $t\geq t_0$} with $\lambda$ defined in Theorem \ref{Main}.
\end{corollary}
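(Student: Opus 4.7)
The strategy is a two-step bootstrap: the hypoelliptic regularization of Theorem \ref{Hyp.ellip.} upgrades the solution from the class appearing in the hypothesis of the corollary to the $H^1$-type class required by Theorem \ref{Main}, after an arbitrarily short initial time, and Theorem \ref{Main} then propagates exponential decay over the remaining interval.

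Fix $t_0>0$ and set $t_1:=t_0/2$. The first step is to apply Theorem \ref{Hyp.ellip.} with its short-time parameter taken equal to $t_1$, evaluated at $t=t_1$: this yields
\begin{equation*}
\int_{\mathbb{R}^{2n}}\left|\nabla_x\!\left(\frac{f(t_1)}{f_\infty}\right)\right|^2 f_\infty\,dxdv + \int_{\mathbb{R}^{2n}}\nabla_v^T\!\left(\frac{f(t_1)}{f_\infty}\right)\!\left(\frac{\partial^2 V}{\partial x^2}+(1-\alpha_0)I\right)\!\nabla_v\!\left(\frac{f(t_1)}{f_\infty}\right) f_\infty\,dxdv \le K_1(t_0)\,\mathcal{I}_0,
\end{equation*}
where $\mathcal{I}_0:=\int_{\mathbb{R}^{2n}}(f_0/f_\infty-1)^2(\|\partial^2V/\partial x^2\|^2+1)f_\infty\,dxdv$ denotes the right-hand side of \eqref{main.est.hyp}. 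In parallel, testing \eqref{KFP2} for $h:=f/f_\infty$ against $2(h-1)f_\infty$ and using the cancellation of the Hamiltonian transport against the Maxwellian weight yields
\begin{equation*}
\frac{d}{dt}\int_{\mathbb{R}^{2n}}(h-1)^2 f_\infty\,dxdv = -2\sigma\int_{\mathbb{R}^{2n}}|\nabla_v h|^2 f_\infty\,dxdv \le 0,
\end{equation*}
so the weighted $L^2$-distance is non-increasing. Since $\|\partial^2V/\partial x^2\|^2+1\ge 1$ pointwise, this gives $\int(f(t_1)/f_\infty-1)^2 f_\infty\,dxdv \le \mathcal{I}_0$, and combining it with the hypoelliptic bound above produces $[\text{LHS of }\eqref{main.est.hyp}](t_1)\le K(t_0)\,\mathcal{I}_0$ for an explicit $K(t_0)$.

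This finiteness places $f(t_1)/f_\infty$ in $H^1(\mathbb{R}^{2n},f_\infty)$ and, via the pointwise bound $\partial^2V/\partial x^2+(1-\alpha_0)I\ge I$ combined with the decomposition $\partial^2V/\partial x^2=(\partial^2V/\partial x^2+(1-\alpha_0)I)-(1-\alpha_0)I$, also ensures the Fisher-type integrability hypothesis of Theorem \ref{Main}. Applying Theorem \ref{Main} at time $t_1$ with $f(t_1)$ as the new initial datum therefore yields, for every $s\ge 0$,
\begin{equation*}
[\text{LHS of }\eqref{main.est.hyp}](t_1+s)\le C\,e^{-2\lambda s}\,[\text{LHS of }\eqref{main.est.hyp}](t_1).
\end{equation*}
For $t\ge t_0=2t_1$, using $e^{-2\lambda(t-t_1)}=e^{2\lambda t_1}e^{-2\lambda t}$ and chaining this bound with the previous step yields \eqref{main.est.hyp} with the explicit constant $C(t_0):=C\,K(t_0)\,e^{\lambda t_0}$. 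The argument is essentially a time-translation gluing of two previously established results; no genuinely new analytic difficulty arises. The only mildly delicate point is the weighted $L^2$-contraction identity above, which rests on the standard cancellation of the Hamiltonian transport against the invariant measure of the kinetic Fokker-Planck flow.
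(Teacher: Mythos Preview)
Your proof is correct and follows essentially the same two-step bootstrap as the paper: apply Theorem~\ref{Hyp.ellip.} at a fixed positive time to gain the $H^1$-regularity needed for Theorem~\ref{Main}, then run Theorem~\ref{Main} from that time onward and absorb the exponential shift into the constant. The only cosmetic differences are your choice of switching time $t_1=t_0/2$ (the paper simply uses $t_0$) and your explicit verification of the $L^2$-contraction and of the Fisher-type integrability hypothesis of Theorem~\ref{Main}, which the paper leaves implicit.
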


\begin{Remark} 
\begin{enumerate}

\item {In contrast  to Theorem \ref{Main},} Theorem \ref{Hyp.ellip.} holds even if the Poincar\'e inequality  \eqref{Poincare} is not satisfied by $f_{\infty}.$ Also, $\tau$ can be larger than $\nu.$ 
\item  The exponents of $t$ in \eqref{Vhyp.ell.1} and \eqref{Vhyp.ell.2}  are optimal when $V$ is a quadratic polynomial (see \cite[Appendix A]{H^k}).
\end{enumerate}  
\end{Remark}  

To illustrate our result, we present  concrete examples of potentials $V$ satisfying our Assumption \ref{Assum:Poincare} and Assumption \ref{A1}: 
\begin{example}[Polynomial confining potentials]\label{ex1}
\emph{
{$a)$} As mentioned in Theorem \ref{Main}, if   $V(x)=\frac{{x}^T M^{-1} x}{2}+p\cdot x+q, \,$  $x\in \mathbb{R}^n$ with a positive definite covariance matrix   $M^{-1}\in \mathbb{R}^{n\times n},$  a constant vector  $p\in \mathbb{R}^n$ and a constant $q\in \mathbb{R},$  the convergence rate is \begin{equation*}
\lambda=\begin{cases} 
\frac{\nu}{2},& \text{ if } \alpha_0>\frac{\nu^2}{4} \, \, \, \, \,\, \, \, \, \,  \,\, \, \, \, \, \,\, \, \, \, \, \,\, \, \, \, \, \,\, \, \, \, \, \,\, \, \, \, \, \,\, \, \, \, \, \,\, \, \, \, \, \,  \text{ (case (a))}\\
\frac{ \nu-\varepsilon}{2},& \text{ if } \alpha_0=\frac{\nu^2}{4}, \text{  for any } \varepsilon\in (0,\nu) \, \, \, \, \text{ (case (b))}\\
 \frac{\nu-\sqrt{\nu^2-4\alpha_0}}{2},& \text{ if } \alpha_0<\frac{\nu^2}{4} \,\, \, \, \, \,\, \, \, \, \,  \,\, \, \, \, \, \,\, \, \, \, \, \,\, \, \, \, \, \,\, \, \, \, \, \,\, \, \, \, \, \,\, \, \, \, \, \,\, \, \, \, \, \,  \text{ (case (d))} \end{cases},
 \end{equation*}  and it is sharp for $\alpha_0\neq \frac{\nu^2}{4},$ {where  $\alpha_0$  is} the smallest eigenvalue of $M^{-1}$ (see Theorem 2.3 $(e)$).}\\
   
\emph{$b)$ More generally, we consider potentials of the form $$V(x)=r|x|^{2k}+V_0(x)$$
where $r>0,$ $k \in \mathbb{N}$ and $V_0\colon \mathbb{R}^n \to \mathbb{R}$ is a polynomial of degree  $j<2k.$ Since we have already considered quadratic potentials, we assume $k\geq 2.$ { $V$ satisfies the Poincar\'e inequality \eqref{Poincare}; this can be proven, for example, by showing that $V$ satisfies one of the sufficient conditions given in \cite[Corollary 1.6]{Poin}.} Concerning Assumption \textcolor{blue}{2.}\ref{A2}\textcolor{blue}{'} we have
\begin{equation*} r\frac{\partial^2 |x|^{2k}}{\partial x^2}=2kr|x|^{2k-2}I+2k(2k-2)r|x|^{2k-4}\begin{pmatrix}
x_1^2& x_1x_2&...&x_1x_n\\
x_1x_2&x^2_2&...&x_2x_n\\
...&...&...&...\\
x_1x_n&x_2 x_n&...&x_n^2\\
\end{pmatrix}\geq 
2kr|x|^{2k-2}I. 
\end{equation*}
Since $V_0 $ has degree $j<2k,$ there is a constant $A>0$ such that $$-A(1+|x|^{2k-3})I\leq \frac{\partial^2 V_0(x)}{\partial x^2}\leq A(1+|x|^{2k-3})I.$$
 Therefore, we can estimate 
 \begin{equation}\label{example} \frac{\partial^2 V(x)}{\partial x^2}\geq 
\left(2kr|x|^{2k-2}-A|x|^{2k-3}-A\right)I.
\end{equation}
 We also observe that there exists  a positive constant $B$ such that 
$$-B(1+|x|^{2k-3})I\leq \frac{\partial^2(\partial_{x_{i}} V(x))}{\partial x^2}\leq B(1+|x|^{2k-3})I$$
for all $ i \in \{1,...,n\}.$ \eqref{example} shows that the smallest eigenvalue of $\frac{\partial^2 V(x)}{\partial x^2}$ satisfies $\alpha(x)\geq 2kr|x|^{2k-2}-A|x|^{2k-3}-A.$ Since $ 2kr|x|^{2k-2}-A|x|^{2k-3}-A$ grows faster than $B(1+|x|^{2k-3})$ as $|x| \to \infty,$ there are constants $c$ and $\tau\in [0,\nu)$ such that \eqref{Condition2} is satisfied. Thus, Theorem \ref{Main} applies to this type {of} potentials. In particular, it applies to double-well potentials of the form   $V(x)=r_1|x|^4-r_2|x|^2, \,\, \, r_1,r_2>0.$ 
}
\end{example}

\begin{Remark}
\begin{enumerate}
\item Our decay and regularization results above extend those of \cite{Herau}, where a stronger assumption, i.e. {$\partial^{2}_{x_i x_j}V \in \bigcap_{p=1}^{\infty}W^{p,\infty}(\mathbb{R}^n)$ }for all $i,j \in \{1,...,n\},$ was made. By contrast, we did not require the boundedness of the second and higher derivatives of $V.$
\item  Most of the previous works on the exponential convergence $f(t)\to f_{\infty}$ as $t\to \infty$  (e.g. \cite{Vil, DMSch, Baudoin, BGH, H^k, BS, CLW}) used the growth condition \eqref{Villanis condition} to get some weighted Poincar\'e type inequalities (see \cite[Lemma A.24]{Vil}), which are crucial in these works -- and additional to the Poincar\'e inequality \eqref{Poincare}. Our technique is rather different, based on  construction of appropriate state dependent matrices and state dependent matrix inequalities so that the (modified) dissipation functional (see \eqref{mod.dis.fun.}  below) decays exponentially.
\item Most of the previous methods for proving the exponential convergence do not give  an accurate decay rate, $\lambda$ is typically much too small there (see \cite[Section 7.2]{Vil}, \cite[Section 1.4]{DMSch}).   For example, in \cite[Section 7.2]{Vil}, the exponential decay rate $\lambda=\frac{1}{40}$ was obtained  for $V(x)=\frac{|x|^2}{2}$ and $\nu=\sigma=1.$  Since our decay rates are sharp  for quadratic potentials, in this setting, the true rate  $\lambda=\frac{1}{2}$ is given by Theorem \ref{Main} $(a)$  and $(e).$ 
\end{enumerate}
\end{Remark}
\section{Modified entropy methods for degenerate Fokker-Planck equations}
We first consider the following degenerate and non-symmetric  Fokker-Planck equation \cite{NFP, FAS}:
\begin{equation}\label{F-P}
\begin{cases}
\partial_t f=\text{div}(D\nabla f+(D+R)\nabla E f),  \,  \, \xi\in \mathbb{R}^d, \, t>0,\\
f(t=0)=f_0\in L_+^1(\mathbb{R}^d),\,\,  \int_{\mathbb{R}^d}f_0 \,d\xi=1
\end{cases}
\end{equation}
where $D\in \mathbb{R}^{d\times d}$ is a constant, symmetric, positive semi-definite $(\text{rank}(D)< d)$ matrix,
$R\in \mathbb{R}^{d\times d}$ is a constant skew-symmetric matrix.
$E:\mathbb{R}^d\to \mathbb{R}$ is a function which only depends on the state variable $\xi.$ We assume that $E$ is confining (i.e. $E(\xi)\to \infty$ for $|\xi|\to \infty$) and smooth enough so that \eqref{F-P} has a unique and smooth solution. 
 If $E$ grows fast enough, \eqref{F-P} has a normalized steady state $f_{\infty}=c_E e^{-E}, \, c_{E}>0. $
The weak maximum principle for degenerate parabolic equations \cite{WMP} can be applied to \eqref{F-P} and we can prove that $f(t,\xi)\geq 0$ for all $t>0, \, \xi \in \mathbb{R}^d.$ The divergence structure implies that  the initial mass is conserved and $f(t,\cdot)$ describes the evolution of a probability density
$$\int_{\mathbb{R}^d}f(t,\xi)d\xi=\int_{\mathbb{R}^d}f_0(\xi)d\xi=1, \, \, \, \forall t\geq 0.$$
We are interested in the large-time behavior of the solution, in particular, when  $\text{rank}(D)$ is  less than the dimension $d.$ 
When $D$ is positive definite (rank$(D)=d$), the large time behavior and exponential convergence have been studied comprehensively (see \cite{BE1}, \cite{OSI}, \cite{NFP}). One of the  well-know conditions which provides the exponential decay of the solution to the steady state is called  \textit{the Bakry-Emery condition} (see \eqref{BE} below) leading to:

\begin{theorem}[{\cite[Theorem 2.6]{NFP}}] Assume $\displaystyle \int_{\mathbb{R}^d}\left(\frac{f_0}{f_{\infty}}-1\right)^2 f_{\infty}d\xi<\infty$ and
 \begin{equation}\label{BE} \exists {\lambda}>0 \, \text{  such that   } \, \frac{\partial^2 E}{\partial \xi^2}(I+RD^{-1})+ \left(\frac{\partial^2 E}{\partial \xi^2}(I+RD^{-1})\right)^T\geq \lambda D^{-1},  \,   \,  \,\forall \xi \in \mathbb{R}^d. \end{equation}
 Then
 $$\int_{\mathbb{R}^d}\left(\frac{f(t)}{f_{\infty}}-1\right)^2 f_{\infty}d\xi\leq e^{-2\lambda t}\int_{\mathbb{R}^d}\left(\frac{f_0}{f_{\infty}}-1\right)^2 f_{\infty}d\xi.$$ 
 \end{theorem}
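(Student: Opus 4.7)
The plan is to reduce the problem to a direct computation on the relative density $h:=f/f_\infty$. Using $\nabla f_\infty = -\nabla E\, f_\infty$ and the identity $\mathrm{div}(R\nabla E\, f_\infty) = 0$ (a consequence of $R^T=-R$, $R\nabla E\cdot\nabla E = 0$, and $\mathrm{tr}(R\,\partial^2 E/\partial\xi^2)=0$), $h$ is seen to satisfy
\[
\partial_t h \;=\; \mathrm{div}(D\nabla h) - [(D-R)\nabla E]\cdot\nabla h,
\]
with $\int h\, f_\infty\, d\xi = 1$ for all $t\geq 0$ by mass conservation. Differentiating $\Psi(t) := \int(h-1)^2 f_\infty\, d\xi$ and integrating by parts, the skew part $R$ again contributes nothing, leaving the clean dissipation identity
\[
\dot\Psi(t) \;=\; -2\int \nabla h^T D\,\nabla h\, f_\infty\, d\xi.
\]

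The central task is to turn the right-hand side into a multiple of $-\Psi(t)$ and conclude via Gronwall. When $R=0$, hypothesis \eqref{BE} reduces to $\partial^2 E/\partial\xi^2 \geq \tfrac{\lambda}{2}D^{-1}$ and the change of variables $\tilde\xi := D^{-1/2}\xi$ reduces the problem to a standard Bakry-Emery/Poincar\'e inequality on a log-concave measure, giving $\int \nabla h^T D\,\nabla h\, f_\infty\, d\xi \geq \lambda\,\Psi$ directly. For general $R$ this direct route is blocked by the non-symmetric drift, and I would instead work with the auxiliary ``twisted'' Fisher information
\[
\Phi(t) \;:=\; \int\nabla h^T D^{-1}\nabla h\, f_\infty\, d\xi,
\]
whose weight matrix is precisely the one appearing on the right-hand side of \eqref{BE}. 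The main technical computation is that of $\dot\Phi$: after commuting $\nabla$ with the evolution equation for $h$ and performing several integrations by parts, $\dot\Phi$ decomposes as a non-positive ``Hessian of $h$'' contribution plus a quadratic form in $\nabla h$ with coefficient matrix exactly $\partial^2 E/\partial\xi^2(I+RD^{-1}) + [\,\cdot\,]^T$; hypothesis \eqref{BE} then yields $\dot\Phi \leq -2\lambda\,\Phi$.

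Finally, I would conclude by combining the $\Phi$-decay with the Cauchy-Schwarz bound $\Phi(t)\cdot\int\nabla h^T D\nabla h\, f_\infty\, d\xi \geq \bigl(\int|\nabla h|^2 f_\infty\, d\xi\bigr)^2$ and the Poincar\'e inequality for $f_\infty$ (itself implied by \eqref{BE}); alternatively, one works with the combined Lyapunov functional $\Psi + \delta\Phi$ for a small $\delta > 0$ and shows $\tfrac{d}{dt}(\Psi+\delta\Phi)\leq -2\lambda(\Psi+\delta\Phi)$, then uses $\Phi\geq 0$ to deduce the bound on $\Psi$. The main obstacle throughout is the lengthy computation of $\dot\Phi$, in particular the careful tracking of cross-terms produced by the skew part $R$ that recombine to form the structure $\partial^2 E/\partial\xi^2(I+RD^{-1})+[\,\cdot\,]^T$; a subsidiary conceptual difficulty is the interpretation of $D^{-1}$ when $\mathrm{rank}(D)<d$, in which case one restricts to $\mathrm{Range}(D)$ via a pseudo-inverse or uses an approximation $D_\varepsilon := D+\varepsilon I$ and sends $\varepsilon\to 0$ using lower semi-continuity of $\Psi$.
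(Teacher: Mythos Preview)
Your overall structure (compute $\dot\Psi$, then differentiate a Fisher-type functional and invoke \eqref{BE}) is right, but the choice of weight matrix is wrong, and this breaks the argument at two places.

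\textbf{Wrong weight.} The functional that makes condition \eqref{BE} appear is the one with weight $D$, not $D^{-1}$: it is precisely the entropy dissipation
\[
I(f|f_\infty)\;=\;2\int_{\mathbb{R}^d}\nabla h^T D\,\nabla h\, f_\infty\,d\xi\;=\;-\dot\Psi(t).
\]
Differentiating $I$ and symmetrizing, the first-order quadratic form in $u=\nabla h$ has coefficient matrix $D\,\partial^2 E\,(D+R)+(D-R)\,\partial^2 E\, D$, and multiplying \eqref{BE} on both sides by $D$ turns it into $D\,\partial^2 E\,(D+R)+(D-R)\,\partial^2 E\, D\ge \lambda D$, which is exactly what is needed to obtain $\dot I\le -2\lambda I$. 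With your weight $P=D^{-1}$, the analogous matrix is $D^{-1}\partial^2 E(D+R)+(D-R)\partial^2 E D^{-1}$, which does \emph{not} equal $\partial^2 E(I+RD^{-1})+[\,\cdot\,]^T$ unless $D$ and $\partial^2 E$ commute; so your claimed identification fails in general.

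\textbf{Closing step.} With the correct weight, the link $\dot\Psi=-I$ allows the paper's clean finish: integrate $\dot I\le -2\lambda I=2\lambda\dot\Psi$ from $t$ to $\infty$ (using $I\to 0$ and $\Psi\to 0$) to get $-I(t)\le -2\lambda\Psi(t)$, i.e.\ $\dot\Psi\le -2\lambda\Psi$, and Gr\"onwall concludes. Your closing option (b) is circular: from $\dot\Psi\le 0$ and $\dot\Phi\le -2\lambda\Phi$ you only get $\tfrac{d}{dt}(\Psi+\delta\Phi)\le -2\lambda\delta\Phi$, and turning this into $-2\lambda(\Psi+\delta\Phi)$ requires $\dot\Psi\le -2\lambda\Psi$, the very conclusion. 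Option (a) introduces a Poincar\'e constant that would degrade the rate and in any case is not needed.

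\textbf{Minor point.} The worry about $\mathrm{rank}(D)<d$ is misplaced here: this theorem is stated in the paper precisely for the non-degenerate case (positive definite $D$), so $D^{-1}$ is honest. The degenerate situation is what motivates the modified entropy method developed afterwards.
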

To prove the theorem above, one considers the  time derivative of the $L^2-$norm
 and we see that it decreases  
 \begin{equation}\label{t.der.L^2}\frac{d}{dt}\int_{\mathbb{R}^d}\left(\frac{f(t)}{f_{\infty}}-1\right)^2 f_{\infty}d\xi=-2\int_{\mathbb{R}^d}\nabla^T\left(\frac{f}{f_{\infty}}\right){D}\nabla\left(\frac{f}{f_{\infty}}\right)f_{\infty}d\xi =:-I(f(t)|f_{\infty})\leq 0.
 \end{equation}
$  I(f(t)|f_{\infty})$ is called the dissipation functional and since $D$ is positive definite it vanishes if and only if $f=f_{\infty}.$ It can be proven that, under the Bakry-Emery condition,
 \begin{equation}\label{diss.f}\frac{d}{dt} I(f(t)|f_{\infty})\leq -2\lambda I(f(t)|f_{\infty}).
 \end{equation}
Integrating this inequality from $(t,\infty)$ and using the convergences $I(f(t)|f_{\infty})\to 0$ and $\displaystyle \int_{\mathbb{R}^d}\left(\frac{f(t)}{f_{\infty}}-1\right)^2 f_{\infty}d\xi \to 0$ as $t \to \infty,$ it follows that \begin{equation}\label{Gron:ent}\frac{d}{dt}\int_{\mathbb{R}^d}\left(\frac{f(t)}{f_{\infty}}-1\right)^2 f_{\infty}d\xi\leq -2\lambda \int_{\mathbb{R}^d}\left(\frac{f(t)}{f_{\infty}}-1\right)^2 f_{\infty}d\xi
\end{equation} and, by Gr\"onwall's lemma, we get the desired result. 

When $D $ is only positive semi-definite, i.e. rank$(D)<d,$ one observes that $I(f(t)|f_{\infty})$ may vanish for certain probability densities $f\neq f_{\infty}.$ Hence the inequalities \eqref{diss.f} and \eqref{Gron:ent} will not hold in general. Since the above problems stem from the singularity of $D,$ one can  modify the dissipation function and define a modified  dissipation functional (see also \cite{FAS, AE})
\begin{equation}\label{mod.dis.fun.}S(f):=2\int_{\mathbb{R}^d}\nabla^T_{\xi}\left(\frac{f}{f_{\infty}}\right){P(\xi)}\nabla_{\xi}\left(\frac{f}{f_{\infty}}\right)f_{\infty}d\xi
\end{equation}
where $P:\mathbb{R}^d\to \mathbb{R}^{d \times d}$ is a symmetric positive definite matrix which will be chosen later. Extending the approach of \cite{FAS, AE}, we allow the matrix $P$ here to depend on $\xi\in \mathbb{R}^d.$   Our goal is to derive a  differential inequality similar to \eqref{diss.f} (like the dissipation functional satisfied for non-degenerate equations), i.e. \begin{equation}\label{IS}
\frac{d}{dt}S(f(t))\leq -2\lambda S(f(t)),\,  \, \, \, 
\end{equation}
for some $\lambda>0$ and a "good" choice of the matrix $P.$ If this holds true, we would obtain 
$$ S(f(t))\leq  S(f_0)e^{-2\lambda t}. $$  If  we can choose such  $P=P(\xi)\geq \eta I$ for some $\eta>0$ and all $\xi \in \mathbb{R}^d$, under the validity of the Poincar\'e inequality \eqref{Poincare} for $f_{\infty}(\xi)=c_E e^{-E(\xi)} $  (where $\begin{pmatrix}
    x\\ v
\end{pmatrix}$ in \eqref{Poincare} is replaced with $\xi$) we have
$$\int_{\mathbb{R}^d}\left(\frac{f(t)}{f_{\infty}}-1\right)^2 f_{\infty}d\xi\leq \frac{1}{C_{PI}} \int_{\mathbb{R}^{d}} \left|\nabla_{\xi} \left(\frac{f(t)}{f_{\infty}}\right)\right|^2f_{\infty}d\xi \leq \frac{1}{2C_{PI}\eta}S(f(t)), \, \, \, $$ which implies the exponential decay of the $L^2-$norm
$$\int_{\mathbb{R}^d}\left(\frac{f(t)}{f_{\infty}}-1\right)^2 f_{\infty}d\xi\leq \frac{1}{2 C_{PI}\eta}S(f_0) e^{-2\lambda t}.$$

 More generally, since the quadratic entropy is also a decreasing function of time $t$, instead of proving  \eqref{IS}, we can consider the functional 
 \begin{multline}\label{Phi1}
 \Phi(f(t))\colonequals\gamma \int_{\mathbb{R}^d}\left(\frac{f(t)}{f_{\infty}}-1\right)^2 f_{\infty}d\xi+S(f(t))\\= \gamma \int_{\mathbb{R}^d}\left(\frac{f(t)}{f_{\infty}}-1\right)^2 f_{\infty}d\xi+2\int_{\mathbb{R}^d}\nabla^T\left(\frac{f}{f_{\infty}}\right){P(\xi)}\nabla\left(\frac{f}{f_{\infty}}\right)f_{\infty}d\xi
 \end{multline} 
and choose a suitable parameter $\gamma\geq 0$ and a matrix $P$ such that 
\begin{equation}\label{dt Phi}
\frac{d \Phi(f(t))}{dt}\leq -2\lambda \Phi(f(t))\leq 0
\end{equation}
 for some $\lambda>0.$
{This} idea and method were successfully applied in \cite{AE} to \eqref{F-P} when the potential $E$ is quadratic.\\

We shall apply this method to the kinetic Fokker-Planck equation with non-quadratic $V(x).$ First, we denote 
$\xi\colonequals \begin{pmatrix}
x\\v
\end{pmatrix} \in \mathbb{R}^{2n},$ $E(\xi):=\frac{\nu}{\sigma}[V(x)+\frac{|v|^2}{2}],\, f_{\infty}=e^{-E}.$ Then the kinetic Fokker-Planck equation \eqref{KFP} can be written in the form of \eqref{F-P},
\begin{equation}\label{GFP}
\partial_t f=\text{div}_{\xi}(D\nabla_{\xi} f+(D+R)\nabla_{\xi} E f)
\end{equation}
with
\begin{equation}\label{D and R}
D=\begin{pmatrix}
0&0\\
0&\sigma I
\end{pmatrix} \in \mathbb{R}^{2n \times 2n} \, \, \, \, \, \, \text{and} \, \, \, \, \, \,   R=\frac{\sigma}{\nu}\begin{pmatrix}
0&-I\\
I&0
\end{pmatrix} \in \mathbb{R}^{2n \times 2n}.
\end{equation}
The rank of the diffusion matrix $D$ is $n<d=2n.$ Thus, \eqref{KFP} is both  non-symmetric and degenerate and the arguments above apply to the equation. 

 We will develop a modified entropy method. We will choose  {$\xi-$}dependent matrix $P$ in the modified dissipation functional  \eqref{mod.dis.fun.} so that \eqref{dt Phi} holds and  $\lambda>0$ is as large as possible.

We also mention that when the potential $E$ is quadratic in \eqref{F-P}, the question about the long time behavior can be reduced to an ODE problem:
\begin{theorem}\label{prop.norm}Let $0 \neq D \in \mathbb{R}^{d\times d} $ be positive semi-definite, $R\in \mathbb{R}^{d\times d}$ be skew-symmetric and $\mathbb{R}^{d} \ni \xi \to E(\xi)=\frac{{\xi}^T K^{-1} \xi}{2}$ for some positive definite matrix $K.$  Assume $(D+R)K^{-1}$ is positive stable and there is no  non-trivial subspace of $\text{\emph{Ker}}D$ which is invariant under $K^{-1}(D-R).$ If $f$ is the solution of \eqref{F-P} and $\xi(t)\in \mathbb{R}^d$ is the solution of the ODE $\dot{\xi}(t)=-K^{-\frac{1}{2}}(D+R)K^{-\frac{1}{2}}\xi$ with initial datum $\xi(0)=\xi_0,$ then 
\begin{equation}\label{prop.norm1}
\sup_{1\neq \frac{f_0}{f_{\infty}} \in L^2(\mathbb{R}^d, f_{\infty})}\frac{||f(t)/f_{\infty}-1||_{L^2(\mathbb{R}^d, f_{\infty})}}{||f_0/f_{\infty}-1||_{L^2(\mathbb{R}^d, f_{\infty})}}=\sup_{0\neq \xi_0\in \mathbb{R}^d}\frac{||\xi(t)||_2}{||\xi_0||_2}, \, \, \,  t\geq 0.
\end{equation}
\end{theorem}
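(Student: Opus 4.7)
The plan is to decompose $L^2(\mathbb{R}^d, f_{\infty})$ into Wiener chaoses, namely Hermite-polynomial subspaces $V_k$ of fixed total degree $k$ adapted to the Gaussian measure $f_\infty \propto e^{-\xi^T K^{-1}\xi/2}$, and then show that the $L^2$-operator norm of the Fokker--Planck semigroup is attained on the first chaos $V_1$, where it coincides with the operator norm of the ODE flow $e^{-tB}$ with $B := K^{-1/2}(D+R)K^{-1/2}$. As a preliminary reduction I change variables $\tilde\xi = K^{-1/2}\xi$; this transforms $D \mapsto K^{-1/2}DK^{-1/2}$, $R \mapsto K^{-1/2}RK^{-1/2}$, makes $f_\infty$ a standard Gaussian, and leaves the stated ODE invariant, so henceforth I treat $K=I$ and write $B = D+R$.

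Writing $h := f/f_\infty$, the generator of the dynamics is $Lh = \operatorname{tr}(D\nabla^2 h) - \xi^T(D+R)\nabla h$. I first verify that $L$ maps each subspace $V_k$ into itself: it preserves the filtration by total polynomial degree, and after subtracting the appropriate lower-order Hermite correction one sees that the top-degree part is sent back into $V_k$. Under the natural isometry $\mathbb{R}^d \ni a \leftrightarrow a\cdot \xi \in V_1$, the restriction $L|_{V_1}$ is precisely the matrix $-B$; on $V_k \cong S^k(\mathbb{R}^d)$ the action is the symmetric-tensor-power derivation $-\sum_{i=1}^{k} I^{\otimes(i-1)}\otimes B\otimes I^{\otimes(k-i)}$ (symmetrized), so the restricted semigroup is the symmetric $k$-th tensor power
$$ \Gamma_k(e^{-tB}) = (e^{-tB})^{\otimes k}\big|_{S^k(\mathbb{R}^d)}, $$
whose operator norm equals $\|e^{-tB}\|_2^{k}$, attained on rank-one symmetric tensors $v^{\otimes k}$ with $v$ a top right singular vector of $e^{-tB}$.

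The crucial comparison is $\|e^{-tB}\|_2 \leq 1$ for all $t\geq 0$: since the symmetric part of $B$ in the chosen coordinates is $D\geq 0$, one has
$$ \frac{d}{dt}|e^{-tB}v|_2^{2} = -2\langle e^{-tB}v,\, D\, e^{-tB}v\rangle \leq 0. $$
Consequently $\|e^{-tB}\|_2^{k}\leq \|e^{-tB}\|_2$ for every $k\geq 1$, and Parseval together with the chaos decomposition gives, for $h_0-1 = \sum_{k\geq 1} p_k$ with $p_k\in V_k$,
$$ \|h(t)-1\|_{L^2(f_\infty)}^{2} = \sum_{k\geq 1}\|p_k(t)\|^{2} \leq \|e^{-tB}\|_2^{2}\sum_{k\geq 1}\|p_k(0)\|^{2} = \|e^{-tB}\|_2^{2}\,\|h_0-1\|_{L^2(f_\infty)}^{2}, $$
with equality whenever $h_0-1$ lies in $V_1$ along a top singular vector of $e^{-tB}$. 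Since the right-hand side is precisely the squared operator norm of the ODE flow $\dot\xi = -B\xi$, this yields \eqref{prop.norm1}.

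The main obstacle is the identification of $e^{tL}|_{V_k}$ with the symmetric $k$-th tensor power of $e^{-tB}$. For $k=1$ it is immediate; for $k=2$ one recognises the Lyapunov-type flow $A(t) = e^{-tB}A(0)e^{-tB^T}$ on symmetric matrices, which already produces the factor $\|e^{-tB}\|_2^{2}$ via the bound $\|e^{-tB}Ae^{-tB^T}\|_F \leq \|e^{-tB}\|_2^{2}\|A\|_F$. For general $k$ one needs either an induction on the degree, based on how $L$ acts on the principal symbol of a homogeneous polynomial, or equivalently a generating-function / Mehler-formula calculation. The hypotheses on $(D+R)K^{-1}$ (positive stability plus the Kalman/invariant-subspace condition) are used only to guarantee that $B$ is itself positive stable, so that $\|e^{-tB}\|_2\to 0$ as $t\to\infty$ and $f_\infty$ is genuinely the long-time attractor; they play no role in the identity \eqref{prop.norm1} itself.
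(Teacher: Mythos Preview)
The paper does not give its own proof of this theorem; it simply cites \cite[Theorem 3.4]{prop} (Arnold--Schmeiser--Signorello). Your chaos-decomposition argument is correct and is exactly the strategy of that reference: after the reduction to $K=I$, one uses the second-quantization identity $e^{tL}|_{V_k}\cong (e^{-tB})^{\otimes k}|_{S^k(\mathbb{R}^d)}$ together with the contractivity $\|e^{-tB}\|_2\le 1$ (from $B+B^T=2D\ge 0$ in the reduced coordinates) to conclude that the propagator norm on $\{1\}^\perp=\bigoplus_{k\ge 1}V_k$ is attained on $V_1$, where it equals $\|e^{-tB}\|_2$.

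The only point you leave slightly open is the tensor-power identification for general $k$. The cleanest way to close it is the generating-function route you mention: for the Wick exponential $g_a(\xi):=e^{a\cdot\xi-|a|^2/2}$ one verifies directly that $e^{tL}g_a=g_{e^{-tB}a}$ (the ansatz $h(t)=g_{a(t)}$ yields $\dot a=-Ba$, and the scalar prefactor is constant because $a^TBa=a^TDa$). Expanding $g_a=\sum_{k\ge 0}\frac{1}{k!}\,{:}(a\cdot\xi)^k{:}$ into homogeneous chaoses and comparing degrees gives $e^{tL}{:}(a\cdot\xi)^k{:}={:}((e^{-tB}a)\cdot\xi)^k{:}$; together with the isometry ${:}(a\cdot\xi)^k{:}\leftrightarrow \sqrt{k!}\,a^{\otimes k}$ (which follows from $\langle g_a,g_b\rangle=e^{a\cdot b}$) this both shows that $L$ preserves each $V_k$ and identifies the restricted semigroup with $(e^{-tB})^{\otimes k}$. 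Your observation that the positive-stability and invariant-subspace hypotheses play no role in the identity \eqref{prop.norm1} itself is also correct; they only ensure $\|e^{-tB}\|_2\to 0$.
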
 
\begin{proof}
See \cite[Theorem 3.4]{prop}.
\end{proof} 
 One consequence of Theorem \ref{prop.norm} is that the decay estimate of the ODE-solution carries over to the corresponding Fokker-Planck equation.
\section{The choice of the matrix P}
For future reference (in the proof of Theorem \ref{Hyp.ellip.}) we shall now also allow the matrix $P$ to be time dependent. Hence we shall next consider the generalized functional
$$
  S(t,f):=2\int_{\mathbb{R}^d}\nabla^T_{\xi}\left(\frac{f}{f_{\infty}}\right){P(t,\xi)}\nabla_{\xi}\left(\frac{f}{f_{\infty}}\right)f_{\infty}d\xi .
$$
The following lemmas will play a crucial role in our arguments.
\begin{lemma}\label{lemma main} Let  $P:[0, \infty)\times \mathbb{R}^{2n}\to \mathbb{R}^{2n \times 2n}$ be smooth and $f$ be the solution of \eqref{KFP}, then 
\begin{multline}\label{derivS}
\frac{d}{dt} S(t,f(t))=-4\sigma\int_{\mathbb{R}^{2n}}\left\{\sum_{i=1}^n (\partial_{v_i}u)^TP\partial_{v_i}u\right\}f_{\infty}dxdv-2\int_{\mathbb{R}^{2n}}  u^T\left\{QP+PQ^T\right\}u f_{\infty}dxdv\\ -2\int_{\mathbb{R}^{2n}}  u^T\left\{[\nabla_x V\cdot \nabla_v-v\cdot \nabla_x+\nu v\cdot\nabla_v-\sigma \Delta_v-\partial_t]P\right\} u f_{\infty}dxdv,
\end{multline}
where $u\colonequals \nabla_{x,v} \left(\frac{f}{f_{\infty}}\right),$ $Q=Q(x)\colonequals \begin{pmatrix}
0&I\\
-\frac{\partial^2 V(x)}{\partial x^2}&\nu I
\end{pmatrix},$ and  $[\nabla_x V\cdot \nabla_v-v\cdot \nabla_x+\nu v\cdot\nabla_v-\sigma \Delta_v -\partial_t]$ denotes a scalar differential operator that is applied  to each element of the matrix $P=P(t,x,v)$. 
\end{lemma}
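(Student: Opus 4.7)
The plan is to differentiate $S(t,f(t))=2\int u^T P u \, f_\infty\,d\xi$ (with $u := \nabla_{x,v}(f/f_\infty)$) directly in $t$, and then shift every derivative of $u$ onto $P$ by integration by parts in $L^2(\mathbb{R}^{2n}, f_\infty)$. Setting $g := f/f_\infty$, one verifies by direct substitution that $g$ solves \eqref{KFP2}, i.e.\ $\partial_t g = Lg$ with the scalar operator $L := -v\cdot \nabla_x + \nabla_x V \cdot \nabla_v + \sigma \Delta_v - \nu v \cdot \nabla_v$. Applying $\nabla_{x,v}$ to this identity and using the elementary commutators $[\nabla_v, v\cdot \nabla_x] = \nabla_x$, $[\nabla_v, \nu v\cdot \nabla_v] = \nu \nabla_v$, $[\nabla_x, \nabla_x V \cdot \nabla_v] = \frac{\partial^2 V}{\partial x^2}\nabla_v$ (all other first-order commutators vanish and the $v$-Laplacian commutes with both $\nabla_x$ and $\nabla_v$), I obtain the vector equation $\partial_t u = Lu - Q^T u$, where $L$ is understood to act componentwise on $u$ and $Q$ is the matrix from the statement.

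Using the symmetry of $P$, the chain rule yields
\begin{equation*}
\frac{d}{dt}S(t,f(t)) = 2\int u^T (\partial_t P)u\, f_\infty\,d\xi + 4\int u^T P(Lu)\, f_\infty\,d\xi - 4\int u^T PQ^T u\, f_\infty\,d\xi.
\end{equation*}
Since $u^T PQ^T u$ is a scalar equal to its transpose $u^T QP u$, the last term symmetrises to $-2\int u^T(QP+PQ^T) u\, f_\infty\, d\xi$, which is the second summand in the claimed identity. For the middle term I split $L = L_1 + L_2$ with $L_1 := -v\cdot\nabla_x + \nabla_x V\cdot\nabla_v$ (Hamiltonian part) and $L_2 := \sigma\Delta_v - \nu v\cdot\nabla_v$ (Ornstein--Uhlenbeck part). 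Using $\nabla_x f_\infty / f_\infty = -(\nu/\sigma)\nabla_x V$ and $\nabla_v f_\infty / f_\infty = -(\nu/\sigma)v$, a short integration by parts verifies that $L_1$ is antisymmetric in $L^2(f_\infty)$. Combined with the first-order Leibniz rule $L_1(Pu) = (L_1 P) u + P(L_1 u)$, the antisymmetry immediately yields $4\int u^T P(L_1 u) f_\infty\,d\xi = -2\int u^T(L_1 P) u\, f_\infty\,d\xi$, supplying the $\nabla_x V\cdot\nabla_v - v\cdot\nabla_x$ piece of the third term in the statement.

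The $L_2$ contribution is the main technical step. A single integration by parts in $v$ gives $4\int u^T P(L_2 u) f_\infty\,d\xi = -4\sigma \sum_{j=1}^n \int (\partial_{v_j}(Pu))^T (\partial_{v_j}u)\, f_\infty\,d\xi$, and expanding $\partial_{v_j}(Pu) = (\partial_{v_j}P)u + P\,\partial_{v_j}u$ splits this into the desired diffusion term $-4\sigma \sum_j \int (\partial_{v_j}u)^T P (\partial_{v_j}u)\,f_\infty$ plus a cross term $-4\sigma \sum_j \int (\partial_{v_j}u)^T(\partial_{v_j}P) u\,f_\infty$. To handle the cross term I use the symmetry of $\partial_{v_j}P$ together with the pointwise identity $2(\partial_{v_j}u)^T(\partial_{v_j}P)u = \partial_{v_j}[u^T(\partial_{v_j}P)u] - u^T(\partial_{v_j}^2 P)u$, and integrate by parts a second time against $f_\infty$; the derivative $\partial_{v_j}f_\infty = -(\nu/\sigma)v_j f_\infty$ regenerates precisely the $\nu v\cdot\nabla_v P$ contribution, while the Hessian pieces combine to a $\sigma\Delta_v P$ term, producing in total $-2\int u^T[(\nu v\cdot\nabla_v - \sigma\Delta_v) P]u\, f_\infty\,d\xi$. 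Assembling the $L_1$-output, the $L_2$-output (diffusion plus cross), and the $\partial_t P$-term from the chain rule, and absorbing the signs into one bracket, yields exactly the operator $(\nabla_x V\cdot\nabla_v - v\cdot\nabla_x + \nu v\cdot\nabla_v - \sigma\Delta_v - \partial_t)P$ appearing in the claimed identity. The main obstacle is precisely this bookkeeping in the $L_2$ step: one must check that the boundary terms generated by $\nabla_v f_\infty$ in the second integration by parts recombine exactly with the Hessian-of-$P$ terms to reproduce the $L_2$-operator acting on $P$ with the correct sign. All manipulations are formal and are routinely justified, for sufficiently smooth $f_0$, by standard truncation and mollification arguments, which I take for granted to keep the focus on the algebraic structure.
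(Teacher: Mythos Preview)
Your proof is correct and follows essentially the same route as the paper: derive the vector equation $\partial_t u = Lu - Q^T u$, differentiate $S$, symmetrise the $Q$-term, and push the remaining $L$-derivatives off $u$ and onto $P$ by integration by parts against $f_\infty$. The only difference is organizational: you exploit the antisymmetry of the transport part $L_1$ and the Dirichlet-form structure of $L_2$ in $L^2(f_\infty)$ to shortcut the integrations by parts, whereas the paper carries out each integration by parts explicitly term-by-term (grouping the $\sigma\Delta_v$ and $-\nu v\cdot\nabla_v$ contributions, then the two transport terms); the underlying computations and the bookkeeping of the cross term are identical.
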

\begin{proof}
We denote $\displaystyle u_1\colonequals \nabla_x\left(\frac{f}{f_{\infty}}\right),$ $\displaystyle u_2\colonequals \nabla_v\left(\frac{f}{f_{\infty}}\right),$ then   $u_1$ and $u_2$ satisfy 
\begin{equation*}\partial_t u_1=\sigma \Delta_v u_1-\nu \sum_{i=1}^n v_i  \partial_{v_i}u_1+\sum_{i=1}^n \partial_{x_i} V \partial_{v_i} u_1-\sum_{i=1}^n v_i \partial_{x_i} u_1+\frac{\partial^2 V}{\partial x^2}u_2,
\end{equation*}
\begin{equation*}
 \partial_t u_2=\sigma \Delta_v u_2-\nu \sum_{i=1}^n v_i  \partial_{v_i}u_2+\sum_{i=1}^n \partial_{x_i} V \partial_{v_i} u_2-\sum_{i=1}^n v_i \partial_{x_i} u_2-u_1-\nu u_2.
 \end{equation*}
 These equations can be written  with respect to $u=\begin{pmatrix}
 u_1\\u_2
\end{pmatrix}:  $
 \begin{equation*}\partial_t u=\sigma \Delta_v u-\nu \sum_{i=1}^n v_i  \partial_{v_i}u+\sum_{i=1}^n \partial_{x_i} V \partial_{v_i} u-\sum_{i=1}^n v_i \partial_{x_i} u-Q^Tu.
\end{equation*}
It allows us to compute the time derivative of the modified dissipation functional
  \begin{multline}\label{time.derS}
 \frac{d}{dt} S(t,f(t))=4\int_{\mathbb{R}^{2n}}
  u^T P  \partial_t u
 f_{\infty}dxdv+2\int_{\mathbb{R}^{2n}}
  u^T  \partial_t P u
 f_{\infty}dxdv\\
= 4\sigma\int_{\mathbb{R}^{2n}} u^T P
 \Delta_v u  f_{\infty}dxdv-4\nu  \sum_{i=1}^n\int_{\mathbb{R}^{2n}} u^T P
   \partial_{v_i}u v_i f_{\infty}dxdv\\+ 4\sum_{i=1}^n \int_{\mathbb{R}^{2n}} u^TP
 \partial_{v_i} u \partial_{x_i} V f_{\infty}dxdv-4\sum_{i=1}^n\int_{\mathbb{R}^{2n}} u^TP
   \partial_{x_i}u v_i f_{\infty}dxdv\\-2\int_{\mathbb{R}^{2n}} u^T \{QP+PQ^T\}u f_{\infty}dxdv+2\int_{\mathbb{R}^{2n}}
  u^T  \partial_t P u
 f_{\infty}dxdv.
 \end{multline}
  First, we consider the term in the second line of \eqref{time.derS} and use $\partial_{v_i}f_{\infty}=-\frac{\nu}{\sigma}v_if_{\infty}:$
  \begin{multline}\label{term1}
   4\sigma \sum_{i=1}^n\int_{\mathbb{R}^{2n}} u^TP
  \partial^2_{v_iv_i} u f_{\infty}dxdv -4\nu  \sum_{i=1}^n\int_{\mathbb{R}^{2n}} u^TP
  \partial_{v_i}u  v_i f_{\infty}dxdv\\=-4\sigma \sum_{i=1}^n \int_{\mathbb{R}^{2n}}\partial_{v_i}u^TP\partial_{v_i}u f_{\infty}dxdv-4\sigma \sum_{i=1}^n\int_{\mathbb{R}^{2n}} u^T (\partial_{v_i}P)\partial_{v_i}u f_{\infty}dxdv.
 \end{multline}
 By integrating by parts the last term of \eqref{term1} we obtain
  \begin{multline*}
  -4\sigma \sum_{i=1}^n\int_{\mathbb{R}^{2n}} u^T (\partial_{v_i}P)\partial_{v_i}u f_{\infty}dxdv\\= 4\sigma \sum_{i=1}^n\int_{\mathbb{R}^{2n}} u^T (\partial_{v_i}P)\partial_{v_i}u f_{\infty}dxdv+4\sigma \sum_{i=1}^n\int_{\mathbb{R}^{2n}} u^T (\partial^2_{v_i v_i}P)u f_{\infty}dxdv-4\nu \sum_{i=1}^n\int_{\mathbb{R}^{2n}} u^T (\partial_{ v_i}P)u v_i f_{\infty}dxdv
  \end{multline*}
  and we find
  \begin{multline*}
  -4\sigma \sum_{i=1}^n\int_{\mathbb{R}^{2n}} u^T (\partial_{v_i}P)\partial_{v_i}u f_{\infty}dxdv=2\sigma \int_{\mathbb{R}^{2n}} u^T (\Delta_v P)u f_{\infty}dxdv-2\nu \sum_{i=1}^n\int_{\mathbb{R}^{2n}} u^T (v_i \partial_{ v_i}P)u  f_{\infty}dxdv.
  \end{multline*}
 If we use this equality  in \eqref{term1}, we get \begin{multline}\label{term11}
 4\sigma\int_{\mathbb{R}^{2n}} u^T P
 \Delta_v u  f_{\infty}dxdv-4\nu  \sum_{i=1}^n\int_{\mathbb{R}^{2n}} u^T P
 v_i  \partial_{v_i}u f_{\infty}dxdv\\=-4\sigma \sum_{i=1}^n \int_{\mathbb{R}^{2n}}(\partial_{v_i}u)^TP\partial_{v_i}u f_{\infty}dxdv-2\int_{\mathbb{R}^{2n}} u^T \{[\nu v\cdot\nabla_v -\sigma \Delta_v] P\}u f_{\infty}dxdv.
 \end{multline}
  Next, we integrate by parts in the terms in the third line of \eqref{time.derS}: \begin{multline}\label{third term1}
  4\sum_{i=1}^n \int_{\mathbb{R}^{2n}} u^TP
 \partial_{v_i} u \partial_{x_i}V f_{\infty}dxdv\\= -4\sum_{i=1}^n \int_{\mathbb{R}^{2n}} u^TP
 \partial_{v_i} u \partial_{x_i} V f_{\infty}dxdv-4\sum_{i=1}^n \int_{\mathbb{R}^{2n}} u^T(\partial_{v_i} P)
 u  \partial_{x_i} V  f_{\infty}dxdv+\frac{4\nu}{\sigma}\sum_{i=1}^n \int_{\mathbb{R}^{2n}} u^TP
 u  \partial_{x_i} V v_i f_{\infty}dxdv,
 \end{multline}
 \begin{multline}\label{third term2}
 -4\sum_{i=1}^n\int_{\mathbb{R}^{2n}} u^TP
 \partial_{x_i}u  v_i  f_{\infty}dxdv\\=4\sum_{i=1}^n\int_{\mathbb{R}^{2n}} u^TP
   \partial_{x_i}u v_i f_{\infty}dxdv+4\sum_{i=1}^n\int_{\mathbb{R}^{2n}} u^T(\partial_{x_i}P)
   uv_i f_{\infty}dxdv-\frac{4\nu}{\sigma}\sum_{i=1}^n\int_{\mathbb{R}^{2n}} u^TP
   u \partial_{x_i}V v_i f_{\infty}dxdv.
 \end{multline}
 \eqref{third term1} and \eqref{third term2} show that the third line of  \eqref{time.derS} equals
 \begin{multline}\label{thirdterm}
  -2\sum_{i=1}^n \int_{\mathbb{R}^{2n}} u^T(\partial_{v_i} P)
  u \partial_{x_i} V f_{\infty}dxdv+2\sum_{i=1}^n\int_{\mathbb{R}^{2n}} u^T(\partial_{x_i}P)
   u v_i f_{\infty}dxdv\\=-2\int_{\mathbb{R}^{2n}} u^T\{[\nabla_x V\cdot \nabla_v-v\cdot \nabla_x]P\}
  u   f_{\infty}dxdv.
 \end{multline}
 Combining \eqref{time.derS}, \eqref{term11}, and \eqref{thirdterm} we obtain the statement \eqref{derivS}.
\end{proof}

\begin{Remark}\label{S-Gamma1}We give now a (formal)  generalization of  the above result \eqref{derivS} to Markovian evolution equations using the \emph{Gamma calculus}, see, e.g., \cite{AGMar, Baudoin, BGH}:\\
\indent
First, let $L$ be the generator of some Markovian evolution on $\R^d$ with corresponding invariant measure $f_\infty d\xi$.
Let $P=P(\xi)$ be a smooth matrix function (but it does not have to be symmetric or positive definite). We define the first order bilinear form
$$
  \Gamma^P(g,h)\colonequals \nabla_\xi g^T P \nabla_\xi h
$$
and 
$$
  \Gamma^P_2(g,h)\colonequals \frac12 \big(L \Gamma^P(g,h)-\Gamma^P(Lg,h)-\Gamma^P(g,Lh)\big)\,.
$$
For a solution $h(t)$ of $\partial_t h=Lh$, these definitions give 
\begin{equation}\label{Gamma-der1}
  \frac{d}{dt} \Gamma^P(h,h)=\Gamma^P(Lh,h)+\Gamma^P(h,Lh)=-2 \Gamma_2^P(h,h) + L \Gamma^P(h,h),\qquad\forall \xi\in\R^d.
\end{equation}
 We use $ \Gamma^P$ to define the modified dissipation functional
$$
  S(f) \colonequals  2\int_{\R^d} \Gamma^P(h,h)\, f_\infty d\xi\quad\text{ with } h=\frac{f}{f_\infty}.
$$
 We obtain by integrating \eqref{Gamma-der1}:
\begin{equation}\label{s-der1}
  \frac{d}{dt} S(f(t)) = -4\int_{\R^d} \Gamma^P_2(h,h)\, f_\infty d\xi \,,
\end{equation}
where we used that $\int_{\mathbb{R}^d} L\Gamma^P(h,h)\, f_\infty d\xi=0$.\\
\indent
In particular, let $L$ be the generator of the kinetic Fokker-Planck equation \eqref{KFP2}, and we recall that $\xi\colonequals \begin{pmatrix}x\\v\end{pmatrix} .$ 
Then, a straightforward (but lengthy) computation shows that 
\begin{multline*}\label{derivS21}
2\Gamma_2^P(h,h)=2\sigma\sum_{i=1}^n (\partial_{v_i}u)^TP\partial_{v_i}u + u^T\left(QP+PQ^T\right)u + u^T (LP) u  
\\+2\sigma \sum_{i=1}^n (\partial_{v_i}u)^T(\partial_{v_i}P)u +4\sigma \sum_{i=1}^n u^T(\partial_{v_i}P)\partial_{v_i}u.
\end{multline*}
One can check (by integrating by parts the term $4 \sigma\int_{\R^d} \sum_{i=1}^n u^T(\partial_{v_i}P)\partial_{v_i} u  f_\infty d\xi$ in the right  hand side of \eqref{s-der1}) that  \eqref{s-der1} coincides with \eqref{derivS}. Hence, \eqref{s-der1} reproduces \eqref{derivS}. But in contrast to \eqref{derivS}, the preceding statement \eqref{Gamma-der1} is local in $\xi$ and therefore stronger.
%
\end{Remark}
The key question for using the modified entropy dissipation functional $S (f )$ is how to choose the matrix $P.$ To determine $P$ we
shall need the following algebraic result:
\begin{lemma}\label{mat.ine}
For any fixed matrix $\mathcal{Q}\in \mathbb{R}^{d\times d},$ let $\mu:=\min\{\textsf{Re}(\beta): \beta \text{ is an eigenvalue of }  \mathcal{Q}\}.$ Let $\{\beta_m: 1\leq m\leq m_0\}$ be all the eigenvalues of $\mathcal{Q}$ with $\mu=\textsf{Re}(\beta),$ only counting their geometric multiplicity.

(a) If $\beta_m$ is non-defective for all $m\in \{1,...,m_0\},$ then there exists a symmetric, positive definite matrix $P\in \mathbb{R}^{d\times d}$ with 
$$\mathcal{Q} P+P\mathcal{Q}^T\geq 2\mu P.$$ 

(b) If $\beta_m$ is defective for at least one  $m\in \{1,...,m_0\},$ then for any $\varepsilon>0$ there exists a symmetric, positive definite matrix $P(\varepsilon)\in \mathbb{R}^{d\times d}$ with 
$$\mathcal{Q}P(\varepsilon)+P(\varepsilon)\mathcal{Q}^T\geq 2(\mu-\varepsilon) P(\varepsilon).$$ 
\end{lemma}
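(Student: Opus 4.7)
The plan is to reduce the problem to the real Jordan normal form of $\mathcal Q$ and then construct $P$ block by block. Let $T\in\R^{d\times d}$ be invertible with $T^{-1}\mathcal Q T=J$, where $J=\mathrm{diag}(J_1,\ldots,J_s)$ is the real Jordan canonical form. Since
$$\mathcal Q P+P\mathcal Q^T-2\mu P=T\bigl(J\tilde P+\tilde P J^T-2\mu \tilde P\bigr)T^T\quad\text{ for }\quad P\colonequals T\tilde P T^T,$$
it suffices to find a symmetric positive definite $\tilde P$ with $J\tilde P+\tilde P J^T\ge 2\mu\tilde P$. I will search among block-diagonal $\tilde P=\mathrm{diag}(\tilde P_1,\ldots,\tilde P_s)$ matching the block structure of $J$, which reduces the task to the block-wise inequalities $J_k\tilde P_k+\tilde P_k J_k^T\ge 2\mu\tilde P_k$.

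For part (a) I split the Jordan blocks into two families. If $J_k$ corresponds to a non-defective eigenvalue on the leftmost line, then either $J_k=(\mu)$ or $J_k=\begin{pmatrix}\mu&\omega\\-\omega&\mu\end{pmatrix}$; in both cases $J_k+J_k^T=2\mu I$, so $\tilde P_k\colonequals I$ yields equality. For every other block, the corresponding eigenvalue $\lambda_k$ satisfies $\mathrm{Re}(\lambda_k)>\mu$. Writing $J_k=C_k+N_k$, where $C_k$ is block-diagonal with $1\times 1$ or $2\times 2$ diagonal factors of real part $\mathrm{Re}(\lambda_k)$ and $N_k$ is the nilpotent part consisting of the super-diagonal identities, I use the rescaling $D_\delta^{(k)}\colonequals\mathrm{diag}(1,\delta,\delta^2,\ldots)$ tensored with the inner-block identity. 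A direct computation gives
$$J_k(D_\delta^{(k)})^2+(D_\delta^{(k)})^2J_k^T-2\mu(D_\delta^{(k)})^2=D_\delta^{(k)}\Bigl(2\bigl(\mathrm{Re}(\lambda_k)-\mu\bigr)I+\delta\,M_k\Bigr)D_\delta^{(k)}$$
for some symmetric matrix $M_k$ independent of $\delta$. Since $\mathrm{Re}(\lambda_k)-\mu>0$, choosing $\delta$ small enough makes the bracket positive definite, and $\tilde P_k\colonequals(D_\delta^{(k)})^2$ does the job.

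For part (b) the same block-wise scaling construction applies uniformly to every block: after the weakening $\mu\mapsto\mu-\varepsilon$, every eigenvalue of $\mathcal Q$ has real part strictly above $\mu-\varepsilon$, so the positive gap $\mathrm{Re}(\lambda_k)-(\mu-\varepsilon)\ge\varepsilon$ can always be exploited by a sufficiently small scaling parameter (chosen per block, depending on block size and on $\varepsilon$). As an alternative, one may invoke Lyapunov's theorem directly: since $\mathcal Q-(\mu-\varepsilon)I$ has only eigenvalues with positive real parts, there exists a symmetric $P(\varepsilon)>0$ with
$$\bigl(\mathcal Q-(\mu-\varepsilon)I\bigr)P(\varepsilon)+P(\varepsilon)\bigl(\mathcal Q-(\mu-\varepsilon)I\bigr)^T=I,$$
which yields the claim in strict form.

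The main technical step is the scaling estimate for Jordan blocks of size greater than one: the positive diagonal gap $2(\mathrm{Re}(\lambda_k)-\mu)$ in (a) (respectively $2\varepsilon$ in (b)) must be shown to dominate the off-diagonal contribution coming from the nilpotent shift $N_k$ after the $D_\delta^{(k)}$-rescaling, uniformly in the block size. This is precisely where non-defectiveness on the leftmost line is essential in (a); without the strict gap $\mathrm{Re}(\lambda_k)>\mu$, no choice of $\delta$ can eliminate the off-diagonal shift, which is exactly why a defective leftmost eigenvalue forces the loss $\mu\to\mu-\varepsilon$ in (b).
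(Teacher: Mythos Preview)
The paper does not give its own proof of this lemma; it simply cites \cite[Lemma~4.3]{AE}. Your argument is correct and is in fact the classical construction that underlies that reference: pass to the real Jordan normal form, take $\tilde P$ block-diagonal, and on each block use a diagonal rescaling $D_\delta$ to shrink the nilpotent super-diagonal so that the strictly positive spectral gap $\mathrm{Re}(\lambda_k)-\mu$ (or $\varepsilon$ in part~(b)) dominates it. The computation $D_\delta^{-1}N_kD_\delta=\delta N_k$ (and its transpose analogue) that you rely on is exactly what makes the bracket $2(\mathrm{Re}(\lambda_k)-\mu)I+\delta M_k$ positive definite for small $\delta$, and your alternative for (b) via the Lyapunov equation for the positive-stable matrix $\mathcal Q-(\mu-\varepsilon)I$ is also standard and valid. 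So there is nothing to compare against in the paper itself, and your self-contained proof is sound; it is essentially the same route as the cited source.
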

\begin{proof}
See \cite[Lemma 4.3]{AE}.
\end{proof}

 We consider the matrix function \begin{equation}\label{Q(x)}
 Q(x)\colonequals \begin{pmatrix}
0&I\\
-\frac{\partial^2 V(x)}{\partial x^2}&\nu I
\end{pmatrix}, \, \, \, \, x \in \mathbb{R}^n,
\end{equation}
which appears in \eqref{derivS}. We want to 
construct a symmetric positive definite matrix $P(x)$ such that $Q(x)P(x)+P(x)Q^T(x)$ is positive definite and 
$$Q(x)P(x)+P(x)Q^T(x)\geq 2\mu  P(x)$$ 
for some $\mu>0$  and for all $x \in \mathbb{R}^n.$ 
We recall  $$\displaystyle \alpha(x)\colonequals \min_{i\in \{1,..,n\} }\left\{\alpha_i(x)\, : \,\alpha_i (x) \, \text{ is an eigenvalue of } \frac{\partial^2 V(x)}{\partial x^2}\right\},$$ $$\displaystyle \alpha_0 \colonequals\inf_{x \in \mathbb{R}^n}\alpha(x),$$
$$\mu\colonequals \inf_{x \in \mathbb{R}^n, \,i\in \{1,..,n\}}\{\textsf{Re}(\beta_i(x)): \beta_i(x) \text{ is an eigenvalue of }  Q(x)\}.$$

\begin{lemma}\label{mat.ine1} 1)  The matrix $Q(x)$ is positive stable at any fixed $x \in \mathbb{R}^n,$ if and only if $\frac{\partial^2 V(x)}{\partial x^2}$ is positive definite.\\
2) Let $\frac{\partial^2 V(x)}{\partial x^2}$ be positive definite for some $x \in \mathbb{R}^n.$ Then:
 \begin{enumerate} \item[(a)] If $\alpha_0> \frac{\nu^2}{4},$ then $\mu=\frac{\nu}{2}$ and there exists a symmetric positive definite matrix $P(x)$ such that 
$$Q(x)P(x)+P(x)Q^T(x)=2\mu P(x).$$
\item[(b)] If $0<\alpha_0<  \frac{\nu^2}{4},$ then  $\mu=\frac{\nu-\sqrt{\nu^2-4\alpha_0}}{2}$ and there exists a symmetric positive definite matrix $P(x)$ such that 
$$Q(x)P(x)+P(x)Q^T(x)\geq 2\mu P(x).$$
\item[(c)] If $\alpha_0 =\frac{\nu^2}{4},$ then $\mu=\frac{\nu}{2}$  and, for any $\varepsilon\in (0, \nu),$  there exists a symmetric positive definite matrix $P(x,\varepsilon)$ such that 
$$Q(x)P(x,\varepsilon)+P(x,\varepsilon)Q^T(x)\geq (2\mu-\varepsilon) P(x,\varepsilon).$$ 
\end{enumerate}
\end{lemma}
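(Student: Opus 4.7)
The plan is to reduce $Q(x)$ to a direct sum of $2\times 2$ blocks via orthogonal diagonalization of the Hessian, and then exploit Lemma \ref{mat.ine} blockwise. At fixed $x$ let $U\in O(n)$ diagonalize $\frac{\partial^2 V(x)}{\partial x^2}=U^T\Lambda U$ with $\Lambda=\mathrm{diag}(\alpha_1,\ldots,\alpha_n)$. Then $Q(x)=\mathrm{diag}(U^T,U^T)\,\tilde Q(x)\,\mathrm{diag}(U,U)$ with $\tilde Q(x)=\begin{pmatrix}0 & I\\ -\Lambda & \nu I\end{pmatrix}$; after the permutation that interleaves the $i$-th $x$- and $v$-coordinates, $\tilde Q(x)$ becomes block diagonal with $i$-th block $Q_i=\begin{pmatrix}0 & 1\\ -\alpha_i & \nu\end{pmatrix}$, whose eigenvalues are $\beta^{\pm}_i=(\nu\pm\sqrt{\nu^2-4\alpha_i})/2$.

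For part 1, a short case analysis on the sign of $\nu^2-4\alpha_i$ shows $\textsf{Re}(\beta^{\pm}_i)>0$ iff $\alpha_i>0$; since the spectrum of $Q(x)$ is the union of the spectra of the $Q_i$'s, $Q(x)$ is positive stable iff the Hessian is positive definite. For part 2, the smallest real part inside a block equals $\nu/2$ when $\alpha_i\geq\nu^2/4$, and the strictly increasing function $(\nu-\sqrt{\nu^2-4\alpha_i})/2$ when $0<\alpha_i\leq\nu^2/4$; taking the infimum over $i$ and $x$, realized as $\alpha(x)\to\alpha_0$, gives the values of $\mu$ in (a)--(c). The repeated eigenvalue $\nu/2$ at $\alpha_i=\nu^2/4$ is defective, since $Q_i-(\nu/2)I$ then has rank $1$.

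To produce $P(x)$, in case (a) every eigenvalue of $Q(x)$ has real part exactly $\mu=\nu/2$ and is non-defective; I would verify the explicit guess
\[
  P(x)=\begin{pmatrix} I & \tfrac{\nu}{2}I \\ \tfrac{\nu}{2}I & \frac{\partial^2 V(x)}{\partial x^2}\end{pmatrix}
\]
directly, obtaining $Q(x)P(x)+P(x)Q^T(x)=\nu P(x)$, and positive definiteness from the Schur complement $\frac{\partial^2 V(x)}{\partial x^2}-\frac{\nu^2}{4}I>0$ (which holds since $\alpha_0>\nu^2/4$). In case (b), at each fixed $x$ the minimizing eigenvalue of $Q(x)$ is real and non-defective (a defective value $\nu/2$ would have real part larger than $\mu$), so Lemma \ref{mat.ine}(a) applied at $\mu_x:=\min_i\textsf{Re}(\beta^-_i(x))\geq\mu$ supplies $P(x)>0$ with $Q(x)P(x)+P(x)Q^T(x)\geq 2\mu_x P(x)\geq 2\mu P(x)$. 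In case (c), the defective points $\alpha(x)=\nu^2/4$ force the loss of $\varepsilon$, and Lemma \ref{mat.ine}(b) yields the required $P(x,\varepsilon)$.

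The most delicate point is case (b) at $x$ with $\alpha(x)=\alpha_0$: there $Q(x)-\mu I$ is only semi-stable, so a strict Lyapunov equation has no positive definite solution. Lemma \ref{mat.ine}(a) still applies because the minimizing eigenvalues are non-defective; concretely, with right eigenvectors $v_\pm$ of the minimizing block $Q_i$ one can take $P_i=v_-v_-^T+v_+v_+^T$, which is positive definite (distinct real $\beta^\pm_i$) and satisfies $Q_iP_i+P_iQ_i^T=2\beta^-_i v_-v_-^T+2\beta^+_i v_+v_+^T\geq 2\mu P_i$, while blocks with $\alpha_j(x)>\nu^2/4$ are handled by strict Lyapunov solvability and then assembled into a pointwise $P(x)$ by conjugating with $\mathrm{diag}(U^T,U^T)$ and the interleaving permutation.
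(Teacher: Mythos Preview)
Your argument is essentially correct and establishes the lemma as stated, but it takes a genuinely different route from the paper. The paper never invokes Lemma~\ref{mat.ine} pointwise; instead it writes down an explicit global formula for $P(x)$ in each case---namely $P(x)=\begin{pmatrix}2I&\nu I\\\nu I&2\frac{\partial^2 V(x)}{\partial x^2}+(\nu^2-4\omega)I\end{pmatrix}$ with $\omega=\alpha_0$ (case (a) is $\omega=\alpha_0>\nu^2/4$, case (b) is $\omega=\alpha_0<\nu^2/4$, case (c) is $\omega=\alpha_0-\varepsilon^2/4$)---and verifies the inequality and positive definiteness by direct eigenvalue computation. Your case~(a) matrix coincides with the paper's up to the harmless factor $2$; for (b) and (c) you instead block-diagonalize and appeal to Lemma~\ref{mat.ine} at each $x$. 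This is cleaner conceptually, but the paper's explicit formula buys more: it gives a $P(x)$ that is as smooth in $x$ as the Hessian and satisfies a \emph{uniform} lower bound $P(x)\geq\eta I$ with an explicit $\eta>0$ independent of $x$. Both features are actually used in the proof of Theorem~\ref{Main}, so your pointwise construction would need to be upgraded before it feeds into the main result.

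There is also a small gap in your case~(b) middle paragraph. You assert that at each $x$ the minimizing eigenvalue of $Q(x)$ is real and non-defective, but this fails whenever $\alpha(x)\geq\nu^2/4$: then $\mu_x=\nu/2$, and if some $\alpha_j(x)=\nu^2/4$ the corresponding block is defective at $\mu_x$, so Lemma~\ref{mat.ine}(a) does not apply with threshold $\mu_x$. The fix is immediate---since $\nu/2>\mu$ in case~(b), $Q(x)-\mu I$ is strictly positive stable at such $x$ and the standard Lyapunov equation gives a strict inequality---and your final paragraph's block-by-block construction already contains this idea (just replace ``$\alpha_j(x)>\nu^2/4$'' by ``$\alpha_j(x)\geq\nu^2/4$'' in the last sentence).
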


\begin{proof}
{Part $1)$}
Let $x$ be any point of $\mathbb{R}^n,$ we compute the eigenvalues $\beta(x)$ of $Q(x).$ If $\beta(x)\neq 0$ we have {the condition}
\begin{multline*}
\det(Q(x)-\beta(x) I)=\begin{vmatrix}
-\beta(x) I&I\\
-\frac{\partial^2 V(x)}{\partial x^2}&(\nu-\beta(x)) I
\end{vmatrix}\\=\frac{1}{(\beta(x))^n} \begin{vmatrix}
-\beta(x) I&0\\
-\frac{\partial^2 V(x)}{\partial x^2}&-\frac{\partial^2 V(x)}{\partial x^2}+\beta(x)(\nu-\beta(x))I 
\end{vmatrix}\\={(-1)^n }\det\left(-\frac{\partial^2 V(x)}{\partial x^2}+\beta(x)(\nu-\beta(x))I\right)=0.
\end{multline*}
Let $\alpha_i(x)\in \mathbb{R}, \, \, i \in \{1,...,n\}$ denote the eigenvalues of $\frac{\partial^2 V(x)}{\partial x^2},$ then
the above eigenvalue condition reads $$ \prod_{i=1}^n(\beta^2(x)-\nu \beta(x)+\alpha_i(x))=0.$$ 
Hence  the non-zero eigenvalues of $Q(x)$ are
\begin{equation}\label{eigenv}
\beta^{\pm}_i(x)=\begin{cases} \frac{\nu \pm \sqrt{{\nu}^2-4 \alpha_i(x)}}{2}, \, \, \, \,\, \,  \text{if }\, \, \, \, \nu^2 \geq 4 \alpha_i(x) \\
\frac{\nu\pm  \mathbf{i} \sqrt{4 \alpha_i(x)-{\nu}^2}}{2}, \, \, \, \, \, \text{if }\, \, \, \, \nu^2 < 4 \alpha_i(x)
\end{cases}, 
\, \, i \in \{1,...,n\},
\end{equation}
where $\mathbf{i}=\sqrt{-1}.$ 
 Moreover,  $\beta(x)=0$ can be an eigenvalue of $Q(x)$ iff one of the eigenvalues of $\frac{\partial^2 V(x)}{\partial x^2}$ is zero. 
This  shows that $Q(x)$ is positive stable (i.e.,\,the eigenvalues $\beta_{i}(x)$ have positive  real part) iff $\frac{\partial^2 V(x)}{\partial x^2}> 0. $\\ 

For Part $2)$  we shall construct  matrices $P(x),$ which relies on the proof of Lemma \ref{mat.ine} (Lemma 4.3 in \cite{AE}).

$(a)$ Let $\alpha_0> \frac{\nu^2}{4}.$
 In this case, because of \eqref{eigenv} the matrix $Q(x)$ is positive stable  and $\mu=\frac{\nu}{2}>0.$
We define the matrix 
    $$P(x)\colonequals \begin{pmatrix}
2I&\nu I\\
\nu I& 2\frac{\partial^2 V(x)}{\partial x^2}
\end{pmatrix},$$
and for this choice, it is easy to check that $$Q(x)P(x)+P(x)Q^T(x)=\nu P(x)=2\mu P(x).$$
To make sure that $P(x)$ is positive definite, we compute the eigenvalues $\eta(x)$ of $P(x)$ at each $x \in \mathbb{R}^n:$ {For $\eta(x)\neq 2$ we have the condition 
\begin{multline*}
\det(P(x)-\eta(x) I)=\begin{vmatrix}
(2-\eta(x)) I&\nu I\\
\nu I& 2\frac{\partial^2 V(x)}{\partial x^2}-\eta(x) I
\end{vmatrix}\\=\frac{1}{(2-\eta(x))^n} \begin{vmatrix}
(2-\eta(x)) I&0\\
\nu I&(2-\eta(x))\left(2\frac{\partial^2 V(x)}{\partial x^2}-\eta(x) I\right)-\nu^2 I 
\end{vmatrix}\\= \det\left((2-\eta(x))\left(2\frac{\partial^2 V(x)}{\partial x^2}-\eta(x) I\right)-\nu^2 I\right)=0.
\end{multline*}
 $\eta(x)=2$ is not an eigenvalue of $P(x)$ and so the eigenvalues of $P(x)$ satisfy
$$\prod_{i=1}^n\left(\eta^2(x)-(2+2\alpha_i(x))\eta(x)+4\alpha_i(x)-\nu^2\right)=0.$$}
 We conclude that the eigenvalues are 
$$\eta^{\pm}_i(x)=1+\alpha_i(x)\pm \sqrt{(\alpha_i(x)+1)^2-(4\alpha_i(x)-\nu^2)}, \, \, i \in \{1,...,n\}.$$
Since we assumed  $\alpha_i(x)\geq \alpha(x)\geq \alpha_0> \frac{\nu^2}{4}$ for all $i\in \{1,...,n\},$  the eigenvalues are positive and satisfy  $$\eta\colonequals \inf_{x \in \mathbb{R}^n, \,i\in\{1,...,n\}}{\eta^{\pm}_i(x)}=1+\alpha_0- \sqrt{(\alpha_0+1)^2-(4\alpha_0-\nu^2)}>0.$$ Thus, $P(x)$ is positive definite and $P(x)\geq \eta I$ for all $x \in \mathbb{R}^n.$  

$(b)-(c)$ Let  $0<\alpha_0\leq\frac{\nu^2}{4}.$ Then \eqref{eigenv} shows $\mu=\frac{\nu-\sqrt{\nu^2-4\alpha_0}}{2}.$ Let $\varepsilon>0$ be a fixed small number. We define $$\omega\colonequals\begin{cases}\alpha_0, \, \, \text{ if } \alpha_0<\frac{\nu^2}{4}\\
\alpha_0-\frac{\varepsilon^2}{4},  \, \, \text{ if } \alpha_0=\frac{\nu^2}{4}\end{cases} $$  
and consider the matrix $$P(x)\colonequals \begin{pmatrix}
2I&\nu I\\
\nu I& 2\frac{\partial^2 V(x)}{\partial x^2}+(\nu^2-4\omega)I
\end{pmatrix}.$$
We compute its eigenvalues $\eta(x)$ by a similar computation as above: 
\begin{equation}\label{eig.val.P}\eta^{\pm}_i(x)=1+\zeta_i(x)\pm \sqrt{(\zeta_i(x)+1)^2-(4\zeta_i(x)-\nu^2)},
\end{equation}
where $\zeta_i(x)\colonequals \alpha_i(x)+\frac{\nu^2}{2}-2\omega>\frac{\nu^2}{4}.$ We also have
{ $$\eta:=\inf_{x \in \mathbb{R}^n, \,i\in\{1,...,n\}}{\eta^{\pm}_i(x)}={
1+\alpha_0+\frac{\nu^2}{2}-2\omega- \sqrt{(\alpha_0+\frac{\nu^2}{2}-2\omega-1)^2+\nu^2}} >0.$$} Thus, $P(x)$ is positive definite and $P(x)\geq \eta I$ for all $x \in \mathbb{R}^n.$
Then we compute
 \begin{multline}\label{QP+PQ..}
 Q(x)P(x)+P(x)Q^T(x)
\\=
(\nu-\sqrt{\nu^2-4\omega}) P(x)+\sqrt{\nu^2-4\omega}\begin{pmatrix}
2 I&(\nu +\sqrt{\nu^2-4\omega}) I\\
(\nu+\sqrt{\nu^2-4\omega})I& 2\frac{\partial^2 V}{\partial x^2}+\sqrt{\nu^2-4\omega}(\nu+\sqrt{\nu^2-4\omega})  I
\end{pmatrix}.
\end{multline}
Since $\frac{\partial^2 V}{\partial x^2}\geq \omega I,$ {the second matrix in the last line of \eqref{QP+PQ..} is bounded below by 
\begin{multline*}\label{m.a}
 \begin{pmatrix}
2 I&(\nu +\sqrt{\nu^2-4\omega}) I\\
(\nu+\sqrt{\nu^2-4\omega})I& 2\omega+\sqrt{\nu^2-4\omega}(\nu+\sqrt{\nu^2-4\omega})  I
\end{pmatrix}\\ =\begin{pmatrix}
2 I&(\nu +\sqrt{\nu^2-4\omega}) I\\
(\nu+\sqrt{\nu^2-4\omega})I& \frac{1}{2}(\nu+\sqrt{\nu^2-4\omega})^2  I
\end{pmatrix}\geq  0. 
\end{multline*}}

Consequently, we get 
\begin{equation*}
Q(x)P(x)+P(x)Q^T(x)\geq (\nu-\sqrt{\nu^2-4\omega})P(x) \, \, \, \text{ for all }\, \, \, x \in \mathbb{R}^n.
\end{equation*}
\end{proof}

Lemma \ref{mat.ine1} shows that, if $\frac{\partial^2 V(x)}{\partial x^2} $ is not positive definite at some $x \in \mathbb{R}^n$ (and hence $\alpha_0\leq 0$), then $Q(x)$ is not positive stable. In this case,  it is not possible to  find a positive constant $\mu$ and a positive definite matrix $P(x)$ such that  $Q(x)P(x)+P(x)Q^T(x)\geq \mu P(x).$ If $\alpha_0$ is just finite and not necessarily positive, we have the following modified inequality.
\begin{lemma}\label{gen.mat.inq}
Let $\alpha_0>-\infty.$ Then there exist $\gamma\geq 0,$ $\delta\in [0, \nu),$ and a symmetric positive definite matrix function $P(x)$ such that
\begin{equation}\label{aux,mat.ineq}
Q(x)P(x)+P(x)Q^T(x)+\gamma D\geq (\nu-\delta)P(x), \, \, \, \, \forall x \in \mathbb{R}^d,
\end{equation}
where  $D=\begin{pmatrix}0&0\\0&\sigma I\end{pmatrix} \in \mathbb{R}^{2n \times 2n}$ is the matrix defined in \eqref{GFP}. 
\end{lemma}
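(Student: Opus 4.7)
The strategy is to reuse the block ansatz from the proof of Lemma \ref{mat.ine1}, but with $\frac{\partial^2 V}{\partial x^2}$ replaced by its shift $A(x)\colonequals\frac{\partial^2 V(x)}{\partial x^2}+cI$, where $c\geq-\alpha_0$ is chosen so that $A(x)\geq(\alpha_0+c)I\geq 0$ holds uniformly in $x$. This positive-semidefinite shift plays the role that the (now absent) positive-definiteness of $\frac{\partial^2 V}{\partial x^2}$ played in Lemma \ref{mat.ine1}. The extra term $\gamma D$ on the left of \eqref{aux,mat.ineq} is then what absorbs the resulting mismatch between $A$ and $\frac{\partial^2 V}{\partial x^2}$ inside the $(2,2)$-block when comparing with $(\nu-\delta)P$.

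Concretely, I would set
\[
P(x)\colonequals \begin{pmatrix} 2I & \nu I \\ \nu I & 2A(x)+\beta I\end{pmatrix},
\]
with $\beta>\tfrac{\nu^2}{2}-2(\alpha_0+c)$ chosen so that the Schur complement $2A(x)+(\beta-\tfrac{\nu^2}{2})I$ is uniformly positive definite; a direct analogue of the eigenvalue computation in the proof of Lemma \ref{mat.ine1} (replacing $\frac{\partial^2 V}{\partial x^2}$ by $A$ and $\nu^2-4\omega$ by $\beta$) then yields $P(x)\geq\eta I$ for some $x$-independent $\eta>0$. Writing $\frac{\partial^2 V}{\partial x^2}=A-cI$ and multiplying out the blocks gives
\[
Q(x)P(x)+P(x)Q^T(x)=\begin{pmatrix} 2\nu I & (\beta+2c+\nu^2)I \\ (\beta+2c+\nu^2)I & 2\nu A(x)+2\nu(c+\beta)I\end{pmatrix},
\]
so after subtracting $(\nu-\delta)P(x)$ and adding $\gamma D$ one finds
\[
M(x):=Q(x)P(x)+P(x)Q^T(x)+\gamma D-(\nu-\delta)P(x)=\begin{pmatrix} 2\delta I & (\beta+2c+\nu\delta)I \\ (\beta+2c+\nu\delta)I & 2\delta A(x)+\bigl(2\nu c+(\nu+\delta)\beta+\gamma\sigma\bigr)I\end{pmatrix}.
\]

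Fixing any $\delta\in(0,\nu)$, the $(1,1)$-block $2\delta I$ is strictly positive, so the Schur-complement criterion reduces $M(x)\geq 0$ to a scalar estimate. Using $A(x)\geq(\alpha_0+c)I$, it suffices to choose $\gamma\geq 0$ large enough that
\[
\gamma\sigma\;\geq\;\frac{(\beta+2c+\nu\delta)^2}{2\delta}-2\delta(\alpha_0+c)-2\nu c-(\nu+\delta)\beta,
\]
which is always achievable because $\gamma$ is unconstrained from above. With this $\gamma$, together with the above $P$ and $\delta$, the inequality \eqref{aux,mat.ineq} holds uniformly in $x\in\R^n$. The only genuine obstacle is bookkeeping: the parameters $c,\beta,\delta,\gamma$ are interlinked through the simultaneous requirements of positive-definiteness of $P$ and nonnegativity of the Schur complement, but because $\gamma$ is free each condition can be met independently, and everything else reduces to direct matrix algebra.
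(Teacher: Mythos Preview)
Your proof is correct and follows essentially the same approach as the paper: the same block ansatz for $P$ (your parameters relate to the paper's via $2a=2c+\beta$), the same computation of $QP+PQ^T+\gamma D-(\nu-\delta)P$, and the same mechanism of absorbing the defect by choosing $\gamma$ large. The only substantive difference is in the order of parameter selection and the verification that the residual matrix $M(x)$ is positive semi-definite: you fix $\delta\in(0,\nu)$ first and then pick $\gamma$ via the Schur complement, whereas the paper computes the eigenvalues of $M(x)$ directly, solves for the smallest $\delta=\delta(a,\gamma)$ making them nonnegative (equation \eqref{delta}), and then observes $\delta\to 0$ as $\gamma\to\infty$. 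For the lemma as stated your route is a bit cleaner; the paper's explicit formula for $\delta(a,\gamma)$, however, is what feeds into the quantitative decay-rate optimisation in Theorem~\ref{Main}~(c)--(d), so if you later want sharp rates you will need to revisit this step and extract that dependence.
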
 
\begin{proof}
Let  $a\geq 0$ be any constant such that  $a+\alpha_0>\frac{\nu^2}{4}.$ 
We consider the matrix 
\begin{equation*}
P(x)\colonequals \begin{pmatrix}
2I&\nu I\\
\nu I& 2\frac{\partial^2 V(x)}{\partial x^2}+2aI
\end{pmatrix}.
\end{equation*}
{In analogy to \eqref{eig.val.P} we find its eigenvalues as } $$\eta^{\pm}_i(x)=1+\zeta_i(x)\pm \sqrt{(\zeta_i(x)+1)^2-(4\zeta_i(x)-\nu^2)},$$
where $\zeta_i(x):=\alpha_i(x)+a\geq a+\alpha_0>\frac{\nu^2}{4},$ and  $\alpha_i(x)\in \mathbb{R}, \, \, i \in \{1,...,n\}$ denote the eigenvalues of $\frac{\partial^2 V(x)}{\partial x^2}.$
We also have
 \begin{equation}\label{eta}
 \eta\colonequals \inf_{x\in \mathbb{R}^n, i\in \{1,...,n\}}{\eta^{\pm}_i(x)}= \frac{4( a+\alpha_0-\frac{\nu^2}{4})}{
1+ a+\alpha_0+ \sqrt{( a+\alpha_0-1)^2+\nu^2}} >0.
\end{equation}   Thus, $P(x)$ is uniformly positive definite and $P(x)\geq \eta I$ for all $x \in \mathbb{R}^n.$

 Next we compute
 $$QP+PQ^T+\gamma D=
\nu P+\begin{pmatrix}
0&2a I\\
2aI& (2\nu a+\gamma \sigma)I
\end{pmatrix}$$ \begin{equation}\label{QP+PQ+yD}
=(\nu-\delta) P+\begin{pmatrix}
2\delta I&(\nu \delta +2a) I\\
(\nu\delta+2a)I& \delta(2\frac{\partial^2 V}{\partial x^2}+2aI)+(2\nu a+\gamma \sigma)I
\end{pmatrix},
\end{equation}
where { $\delta\in  [0, \nu)$  will be chosen later.}
We compute the (real) eigenvalues $\theta$ of the symmetric matrix 
\begin{equation}\label{matrix.aux}
\begin{pmatrix}
2\delta I&(\nu \delta +2a) I\\
(\nu\delta+2a)I& \delta(2\frac{\partial^2 V}{\partial x^2}+2aI)+(2\nu a+\gamma \sigma)I
\end{pmatrix} 
\end{equation} which appears in \eqref{QP+PQ+yD}:\\
{For $\theta(x)\neq 2 \delta$  we have the condition}
\begin{multline*}\begin{vmatrix}
(2\delta-\theta) I&(\nu \delta +2a) I\\
(\nu\delta+2a)I& \delta(2\frac{\partial^2 V}{\partial x^2}+2aI)+(2\nu a+\gamma \sigma-\theta)I
\end{vmatrix}\\= \frac{1}{(2\delta-\theta)^n}\begin{vmatrix}
(2\delta-\theta) I& 0 \\
(\nu \delta+2a)I & (2\delta-\theta) \left(\delta(2\frac{\partial^2 V}{\partial x^2}+2aI)+(2\nu a+\gamma \sigma-\theta)I \right)-(\nu \delta +2a)^2 I 
\end{vmatrix}\\  
=\bigg| (2\delta-\theta) \left(\delta(2\frac{\partial^2 V}{\partial x^2}+2aI)+(2\nu a+\gamma \sigma-\theta)I \right)-(\nu \delta +2a)^2 I \bigg| \\
=\prod_{i=1}^n \left(\theta^2-\theta\left[2\delta(\alpha_i(x)+a)+2\delta+2\nu a+\gamma\sigma\right]+4\delta^2(\alpha_i(x)+a-{\nu^2}/{4})+2\delta \gamma \sigma -4a^2 \right)=0.
\end{multline*}
Let us consider the following equations with $i \in \{1,...,n\}:$
\begin{equation}\label{eign.aux}\theta^2-\theta[2\delta(\alpha_i(x)+a)+2\delta+2\nu a+\gamma\sigma]+[4\delta^2(\alpha_i(x)+a-{\nu^2}/{4})+2\delta \gamma \sigma -4a^2]=0,
\end{equation} and we shall show that they have non-negative solutions for an appropriate choice of $\delta$ and $\gamma.$ To this end  we see first that 
$$2\delta(\alpha_i(x)+a)+2\delta+2\nu a+\gamma\sigma\geq 2\delta(\alpha_0+a)+2\delta\geq \frac{\delta \nu^2}{2}+2\delta\geq 0.$$
Next, we choose  
\begin{equation}\label{delta}
\displaystyle \delta=\delta(a,\gamma)\colonequals \frac{1}{\sqrt{a+\alpha_0-\frac{\nu^2}{4}}}\left[\sqrt{\left(\frac{\gamma \sigma}{4\sqrt{a+\alpha_0-\frac{\nu^2}{4}}}\right)^2+a^2}-\frac{\gamma \sigma}{4\sqrt{a+\alpha_0-\frac{\nu^2}{4}}}\right]\geq 0,
\end{equation} which satisfies    \begin{equation}\label{quad.eq}
4\delta^2(a+\alpha_0-\frac{\nu^2}{4}) +2\delta \gamma \sigma -4a^2=0.
\end{equation} 
Hence, the last term of \eqref{eign.aux} satisfies
$$4\delta^2(\alpha_i(x)+a-\frac{\nu^2}{4})+2\delta \gamma \sigma -4a^2\geq 4\delta^2(a+\alpha_0-\frac{\nu^2}{4}) +2\delta \gamma \sigma -4a^2=0 $$
for all $i \in \{1,...,n\}.$
Therefore, the quadratic equations \eqref{eign.aux} have non-negative coefficients and so their solutions, i.e.  the eigenvalues of \eqref{matrix.aux}, are non-negative. Consequently, we get \eqref{aux,mat.ineq}.

We note that $\delta$ from \eqref{delta} satisfies, for any fixed $a>\frac{\nu}{4}-\alpha_0,$ $\delta(a, \gamma) \to 0$ as $\gamma \to \infty.$ Hence, choosing $\gamma$ large enough, we have $\delta\in [0,\nu).$ 
\end{proof}
\begin{Remark} 
If $\alpha_0>0,$ we can take $\gamma=0$ in Lemma \ref{gen.mat.inq}. This follows by choosing in the proof of Lemma \ref{gen.mat.inq}
$$a=\begin{cases} 
0,& \text{ if } \alpha_0>\frac{\nu^2}{4}\\
 \frac{\varepsilon^2}{2},& \text{ if } \alpha_0=\frac{\nu^2}{4} \\
 \frac{\nu^2-4\alpha_0}{2},& \text{ if } 0<\alpha_0<\frac{\nu^2}{4} \end{cases}, \, \, \, \, \delta=\begin{cases} 
0,& \text{ if } \alpha_0>\frac{\nu^2}{4}\\
 \frac{\varepsilon}{\sqrt{2}},& \text{ if } \alpha_0=\frac{\nu^2}{4} \\
 \sqrt{\nu^2-4\alpha_0},& \text{ if } 0<\alpha_0<\frac{\nu^2}{4} \end{cases},$$ with any $\varepsilon\in (0, \nu).$  Therefore, Lemma \ref{gen.mat.inq} includes the second part of Lemma \ref{mat.ine1}. 
 However, if $\alpha_0\leq 0,$ we have to choose $\gamma>0.$  
\end{Remark}
\section{Proofs }
\subsection{Proof of Theorem \ref{Main} }
\begin{proof}
We denote 
 $\displaystyle u_1\colonequals \nabla_{x}\left(\frac{f}{f_{\infty}}\right),\, \,  u_2\colonequals \nabla_{v}\left(\frac{f}{f_{\infty}}\right),$ and $  u\colonequals \begin{pmatrix}
u_1\\u_2
\end{pmatrix}.$
We consider the modified  dissipation functional
$$S(f(t))=2 \int_{\mathbb{R}^{2n}}u^T(t) Pu(t) f_{\infty}dxdv$$
for some {symmetric positive definite} matrix $P=P(x,v)\in \mathbb{R}^{2n \times 2n}.$ By Lemma \ref{lemma main} (for a $t$-independent matrix $P$) we have  
\begin{multline}\label{SKFP}\displaystyle\frac{d}{dt} S(f(t))=-4\sigma\int_{\mathbb{R}^{2n}}\left\{\sum_{i=1}^n (\partial_{v_i}u)^TP\partial_{v_i}u\right\}f_{\infty}dxdv-2\int_{\mathbb{R}^{2n}}  u^T\left\{QP+PQ^T\right\}u f_{\infty}dxdv\\ -2\int_{\mathbb{R}^{2n}}  u^T\left\{[\nabla_x V\cdot \nabla_v-v\cdot \nabla_x+\nu v\cdot\nabla_v-\sigma \Delta_v]P\right\} u f_{\infty}dxdv,
\end{multline}
with $Q(x)= \begin{pmatrix}
0&I\\
-\frac{\partial^2 V(x)}{\partial x^2}&\nu I
\end{pmatrix}.$
Let $c\in \mathbb{R}$ and $\tau\in [0,\nu) $ are the  constants such that Assumption \ref{A1} is satisfied. Since \eqref{Condition1} is positive semi-definite,
$\frac{\partial^2  V(x)}{\partial x^2}+cI$
is also  positive semi-definite and so $\frac{\partial^2  V(x)}{\partial x^2}\geq -cI$ for all $x \in \mathbb{R}^n.$  We define the matrix $P$ depending on the constant $c.$
\subsubsection*{Case $(a)$:}Assume $c\leq -\frac{\nu^2}{4}, \,\, \alpha_0>\frac{\nu^2}{4}.$ 
By Lemma \ref{mat.ine1}  $(2a)$ and {by its proof},  the matrix  $P(x)\colonequals \begin{pmatrix}
2I&\nu I\\
\nu I& 2\frac{\partial^2 V(x)}{\partial x^2}
\end{pmatrix}$
satisfies $$Q(x)P(x)+P(x)Q^T(x)=\nu P(x)\, \,\text{ and } \, \,P(x)\geq \eta I $$ 
for all $ x \in \mathbb{R}^n$ and $\eta\colonequals 1+\alpha_0- \sqrt{(\alpha_0+1)^2-(4\alpha_0-\nu^2)}>0.$
For this choice of the matrix $P,$ 
\begin{equation}\label{[]P}
[\nabla_x V\cdot \nabla_v-v\cdot \nabla_x+\nu v\cdot\nabla_v-\sigma \Delta_v]P(x)=\begin{pmatrix}
0&0\\
0& -2\frac{\partial^2 (v\cdot \nabla_x V)}{\partial x^2}
\end{pmatrix}.
\end{equation}
Then \eqref{SKFP} can be written as
\begin{multline}\label{SKFP1}
\frac{d}{dt} S(f(t))=-4\sigma\int_{\mathbb{R}^{2n}}\left\{\sum_{i=1}^n (\partial_{v_i}u)^TP\partial_{v_i}u\right\}f_{\infty}dxdv\\ -2\nu\int_{\mathbb{R}^{2n}}  u^T Pu f_{\infty}dxdv +4\int_{\mathbb{R}^{2n}} u^T\begin{pmatrix}
0&0\\
0& \frac{\partial^2 (v\cdot \nabla_x V)}{\partial x^2}
\end{pmatrix} u f_{\infty}dxdv\\=
-4\sigma\int_{\mathbb{R}^{2n}}\left\{\sum_{i=1}^n (\partial_{v_i}u)^TP\partial_{v_i}u\right\}f_{\infty}dxdv-\nu S(f(t))+4\int_{\mathbb{R}^{2n}} u^T\begin{pmatrix}
0&0\\
0& \frac{\partial^2 (v\cdot \nabla_x V)}{\partial x^2}
\end{pmatrix} u f_{\infty}dxdv.
\end{multline}
We shall now consider each term of this equation. First we compute
\begin{multline}\label{est:S}
S(f(t))=2\int_{\mathbb{R}^{2n}}\left\{2 |u_{1}|^2+2\nu  u_{1}\cdot u_{2}+2u^T_{2}\frac{\partial^2  V}{\partial x^2}u_{2}\right\}f_{\infty}dxdv\\=4\int_{\mathbb{R}^{2n}}|u_{1}+\frac{\nu}{2} u_{2}|^2f_{\infty}dxdv+4\int_{\mathbb{R}^{2n}} u^T_{2}\left(\frac{\partial^2  V}{\partial x^2}-\frac{\nu^2}{4}I\right)u_{2} f_{\infty}dxdv\\ \geq 4\int_{\mathbb{R}^{2n}} u^T_{2}\left(\frac{\partial^2  V}{\partial x^2}-\frac{\nu^2}{4}I\right)u_{2} f_{\infty}dxdv.
 \end{multline}
  Then 
\begin{multline}\label{est:Tr}\displaystyle 4\sigma\int_{\mathbb{R}^{2n}}\left\{\sum_{i=1}^n (\partial_{v_i}u)^TP \partial_{v_i}u\right\}f_{\infty}dxdv\\=4\sigma\int_{\mathbb{R}^{2n}}\left\{\sum_{i=1}^n\left(2 |\partial_{v_i}u_1|^2+2\nu  \partial_{v_i}u_1\cdot \partial_{v_i}u_2+2(\partial_{v_i}u_2)^T\frac{\partial^2  V}{\partial x^2}\partial_{v_i}u_2\right)\right\}f_{\infty}dxdv\\=
 8\sigma\int_{\mathbb{R}^{2n}}\left\{\sum_{i=1}^n |\partial_{v_i}u_1+\frac{\nu}{2} \partial_{v_i}u_2|^2\right\}f_{\infty}dxdv+8\sigma\int_{\mathbb{R}^{2n}}\left\{\sum_{i=1}^n (\partial_{v_i}u_2)^T\left(\frac{\partial^2  V}{\partial x^2}-\frac{\nu^2}{4}I\right)\partial_{v_i}u_2\right\}f_{\infty}dxdv\\
 \geq 8\sigma\int_{\mathbb{R}^{2n}}\left\{\sum_{i=1}^n (\partial_{v_i}u_2)^T\left(\frac{\partial^2  V}{\partial x^2}-\frac{\nu^2}{4}I\right)\partial_{v_i}u_2\right\}f_{\infty}dxdv.
 \end{multline}
 Now we consider the last term in \eqref{SKFP1}
 \begin{multline}\label{est:mix} \displaystyle 4\int_{\mathbb{R}^{2n}} u^T\begin{pmatrix}
0&0\\
0& \frac{\partial^2 (v\cdot \nabla_x V)}{\partial x^2}
\end{pmatrix} u f_{\infty}dxdv=4\int_{\mathbb{R}^{2n}} u_2^T\frac{\partial^2 (v\cdot \nabla_x V)}{\partial x^2}
 u_2 f_{\infty}dxdv\\=
  4\int_{\mathbb{R}^{2n}}\left\{\sum_{i,j=1}^n u_{2,i} v\cdot \nabla_x V_{ij}u_{2,j}\right\}f_{\infty}dxdv=4\int_{\mathbb{R}^{2n}}\left\{\sum_{i,j,k=1}^n u_{2,i} v_k V_{ijk}u_{2,j}\right\}f_{\infty}dxdv\\=-\frac{4\sigma}{\nu}\int_{\mathbb{R}^{2n}}\left\{\sum_{i,j,k=1}^n u_{2,i} V_{ijk}u_{2,j}(\partial_{v_k}f_{\infty})\right\}dxdv=\frac{4\sigma}{\nu}\int_{\mathbb{R}^{2n}}\left\{\sum_{i,j,k=1}^n \partial_{v_k}(u_{2,i} u_{2,j}) V_{ijk}\right\}f_{\infty}dxdv\\= \frac{4\sigma}{\nu}\int_{\mathbb{R}^{2n}}\left\{\sum_{i,j,k=1}^n (\partial_{v_k}u_{2,i}) u_{2,j} V_{ijk}+u_{2,i}(\partial_{v_k}u_{2,j})  V_{ijk}\right\}f_{\infty}dxdv\\=
 \frac{8\sigma}{\nu}\int_{\mathbb{R}^{2n}}\left\{\sum_{i,j,k=1}^n (\partial_{v_k}u_{2,i}) u_{2,j} V_{ijk}\right\}f_{\infty}dxdv=\frac{8\sigma}{\nu}\int_{\mathbb{R}^{2n}}\left\{\sum_{k=1}^n (\partial_{v_k} u_2)^T \frac{\partial^2(\partial_{x_k}V)}{\partial x^2} u_{2} \right\}f_{\infty}dxdv,
 \end{multline}
where we integrated by parts and used $\partial_{v_k} f_{\infty}=-\frac{\nu}{\sigma}v_k f_{\infty}$ and the notations $u_{2,i}\colonequals\partial_{v_i}\left(\frac{f}{f_{\infty}}\right), $ $V_{ij}\colonequals \partial_{x_ix_j}^2V,$ $V_{ijk}\colonequals \partial^3_{x_ix_jx_k}V.$  
 By \eqref{SKFP1}, \eqref{est:Tr}, \eqref{est:mix}, and \eqref{est:S} we obtain  
  \begin{multline*}\displaystyle\frac{d}{dt} S(f(t))+(\nu-\tau)S(f(t))\leq   -\tau S(f(t))\\-8\sigma\int_{\mathbb{R}^{2n}}\left\{\sum_{i=1}^n (\partial_{v_i}u_2)^T\left(\frac{\partial^2  V}{\partial x^2}-\frac{\nu^2}{4}I\right)\partial_{v_i}u_2\right\}f_{\infty}dxdv+
    \frac{8\sigma}{\nu}\int_{\mathbb{R}^{2n}}\left\{\sum_{i=1}^n (\partial_{v_i}u_{2})^T \frac{\partial^2(\partial_{x_i}V)}{\partial x^2} u_{2} \right\}f_{\infty}dxdv\\ \leq 
-4\tau\int_{\mathbb{R}^{2n}} u^T_{2}\left(\frac{\partial^2  V}{\partial x^2}-\frac{\nu^2}{4}I\right)u_{2}f_{\infty}dxdv -8\sigma\int_{\mathbb{R}^{2n}}\left\{\sum_{i=1}^n (\partial_{v_i}u_2)^T\left(\frac{\partial^2  V}{\partial x^2}-\frac{\nu^2}{4}I\right)\partial_{v_i}u_2\right\}f_{\infty}dxdv\\+\frac{8\sigma}{\nu}\int_{\mathbb{R}^{2n}}\left\{\sum_{i=1}^n (\partial_{v_i}u_2)^T \frac{\partial^2(\partial_{x_i}V)}{\partial x^2} u_{2} \right\}f_{\infty}dxdv
 \end{multline*}
 \begin{multline*}=-\frac{8\sigma}{\nu}\sum_{i=1}^n\int_{\mathbb{R}^{2n}} \left\{\nu (\partial_{v_i}u_2)^T\left(\frac{\partial^2  V}{\partial x^2}-\frac{\nu^2}{4}I\right)\partial_{v_i}u_2-(\partial_{v_i}u_2)^T \frac{\partial^2(\partial_{x_i}V)}{\partial x^2} u_{2}\right\}f_{\infty}dxdv\\-\frac{8\sigma}{\nu}\int_{\mathbb{R}^{2n}}\frac{\tau \nu}{2\sigma}u^T_{2}\left(\frac{\partial^2  V}{\partial x^2}-\frac{\nu^2}{4}I\right)u_{2}f_{\infty}dxdv.
 \end{multline*}
 The right hand side of this inequality  is a quadratic polynomial with respect to {$\partial_{v_i}u_{2}, \, \, i\in \{1,...,n\},$ and $u_2.$}  The corresponding matrix of this quadratic polynomial is
 \begin{equation}\label{matrix}  \begin{pmatrix}
 \nu \left(\frac{\partial^2  V}{\partial x^2}-\frac{\nu^2}{4}I\right)&0&...&0&-\frac{1}{2}\frac{\partial^2(\partial_{x_1}V)}{\partial x^2} \\
 0&\nu \left(\frac{\partial^2  V}{\partial x^2}-\frac{\nu^2}{4}I\right)&...&0&-\frac{1}{2}\frac{\partial^2(\partial_{x_2}V)}{\partial x^2}\\ 
 
 ...&...&...&...&...\\
 0&0&...&\nu \left(\frac{\partial^2  V}{\partial x^2}-\frac{\nu^2}{4}I\right)&-\frac{1}{2}\frac{\partial^2(\partial_{x_n}V)}{\partial x^2}\\
 -\frac{1}{2}\frac{\partial^2(\partial_{x_1}V)}{\partial x^2}&-\frac{1}{2}\frac{\partial^2(\partial_{x_2}V)}{\partial x^2}&...&-\frac{1}{2}\frac{\partial^2(\partial_{x_n}V)}{\partial x^2}&\frac{\tau \nu}{2\sigma}\left(\frac{\partial^2  V}{\partial x^2}-\frac{\nu^2}{4}I\right)
 \end{pmatrix}.
 \end{equation}
  The assumption $ \frac{\partial^2  V}{\partial x^2}-\frac{\nu^2}{4}I\geq \frac{\partial^2  V}{\partial x^2}+cI$ and the Assumption \ref{A1} imply that  \eqref{matrix} is positive semi-definite. \\

Thus we have obtained 
   $$\displaystyle\frac{d}{dt} S(f(t))+(\nu-\tau)S(f(t))\leq 0$$ 
   and  by Gr\"onwall's lemma 
   \begin{equation}\label{Gron.1}
   S(f(t))\leq e^{-(\nu-\tau)t}S(f_0).
   \end{equation}
   The estimate $P(x)\geq \eta I$  and the Poincar\'e inequality \eqref{Poincare} imply 
 \begin{equation}\label{L^2les S}
 \int_{\mathbb{R}^{2n}}\left(\frac{f(t)}{f_{\infty}}-1\right)^2 f_{\infty}dxdv\leq \frac{1}{2C_{PI}\eta} S(f(t))\leq \frac{1}{2C_{PI}\eta} e^{-(\nu-\tau)t}S(f_0).
 \end{equation} 
The matrix inequalities (see Lemma \ref{lem:mat.inq.} in  Appendix \ref{6.2})
\begin{equation}\label{mat.in1}
\frac{1}{1+\alpha_0+\sqrt{(1-\alpha_0)^2+\nu^2}}P \leq \begin{pmatrix}
I&0\\ 0& \frac{\partial^2  V}{\partial x^2}+(1-\alpha_0)I
\end{pmatrix}\leq \frac{1+\alpha_0+\sqrt{(1-\alpha_0)^2+\nu^2}}{4\alpha_0-\nu^2}P
\end{equation}   
 show that  $S(f(t))$ is equivalent to the functional $$\int_{\mathbb{R}^{2n}}\left|\nabla_{x}\left(\frac{f(t)}{f_{\infty}}\right)\right|^2 f_{\infty}dxdv+\int_{\mathbb{R}^{2n}}\nabla^T_{v}\left(\frac{f(t)}{f_{\infty}}\right)\left(\frac{\partial^2 V}{\partial x^2}+ (1-\alpha_0) I\right)\nabla_{v}\left(\frac{f(t)}{f_{\infty}}\right)f_{\infty} dxdv.$$ 
 This equivalence, and \eqref{L^2les S} let us 
 obtain \eqref{main.est}. 

  \subsubsection*{Case $(b)$:}
Assume $c=-\alpha_0=-\frac{\nu^2}{4}.$ 
Then by  Lemma \ref{mat.ine1} $(2c),$ for any $\varepsilon\in (0,\nu-\tau),$  the matrix $$P(x)\colonequals\begin{pmatrix}
2I&\nu I\\
\nu I& 2\frac{\partial^2 V(x)}{\partial x^2}+\varepsilon^2 I
\end{pmatrix}$$ satisfies {
\begin{equation}\label{ep QP+PQ^T}
Q(x)P(x)+P(x)Q^T(x)\geq (\nu-{\varepsilon})P(x) \, \, \, \, \text{and} \, \, \, P(x)\geq \eta I
\end{equation}
  for all $x \in \mathbb{R}^n$ and $\eta\colonequals {
1+\frac{\nu^2+2\varepsilon^2}{4}- \sqrt{(\frac{\nu^2+2\varepsilon^2}{4}-1)^2+\nu^2}} >0.$}
With this matrix we have  
\begin{multline}\label{S2:S}
S(f(t))=4\int_{\mathbb{R}^{2n}}|u_{1}+\frac{\nu}{2} u_{2}|^2f_{\infty}dxdv+4\int_{\mathbb{R}^{2n}} u^T_{2}\left(\frac{\partial^2  V}{\partial x^2}+\frac{2\varepsilon^2-\nu^2}{4}I\right)u_{2} f_{\infty}dxdv\\ \geq 4\int_{\mathbb{R}^{2n}} u^T_{2}\left(\frac{\partial^2  V}{\partial x^2}+\frac{2\varepsilon^2-\nu^2}{4}I\right)u_{2} f_{\infty}dxdv,
 \end{multline}
 \begin{multline}\label{S2:Tr}
  4\sigma\int_{\mathbb{R}^{2n}}\left\{\sum_{i=1}^n (\partial_{v_i}u)^T P \partial_{v_i}u\right\}f_{\infty}dxdv\\=
8\sigma\int_{\mathbb{R}^{2n}}\left\{\sum_{i=1}^n |\partial_{v_i}u_{1}+\frac{\nu}{2}\partial_{v_i} u_{2}|^2\right\}f_{\infty}dxdv+8\sigma\int_{\mathbb{R}^{2n}}\left\{\sum_{i=1}^n (\partial_{v_i}u_2)^T\left(\frac{\partial^2  V}{\partial x^2}+\frac{2\varepsilon^2-\nu^2}{4}I\right)\partial_{v_i}u_2\right\}f_{\infty}dxdv,
 \end{multline}
and {by using \eqref{[]P}, $\partial_{v_i}f_{\infty}=-\frac{\nu}{\sigma}v_i f_{\infty}:$}
\begin{multline}\label{S2:mix} -2\int_{\mathbb{R}^{2n}}  u^T\left\{[\nabla_x V\cdot \nabla_v-v\cdot \nabla_x+\nu v\cdot\nabla_v-\sigma \Delta_v]P\right\} u f_{\infty}dxdv\\=\frac{8\sigma}{\nu}\int_{\mathbb{R}^{2n}}\left\{\sum_{i=1}^n(\partial_{v_i}u_2)^T \frac{\partial^2(\partial_{x_i}V)}{\partial x^2} u_{2} \right\}f_{\infty}dxdv.
 \end{multline}
 \eqref{SKFP}, \eqref{ep QP+PQ^T}, \eqref{S2:S}, \eqref{S2:Tr}, \eqref{S2:mix}, and similar estimates as for Case $a)$ show that 
  \begin{multline*}
  \displaystyle\frac{d}{dt} S(f(t))+(\nu-\tau-\varepsilon)S(f(t))\\
  \leq  -\frac{8\sigma}{\nu}\sum_{i=1}^n\int_{\mathbb{R}^{2n}}\left\{ \nu (\partial_{v_i}u_2)^T\left(\frac{\partial^2  V}{\partial x^2}+\frac{2\varepsilon^2-\nu^2}{4}I\right)\partial_{v_i}u_{2}-(\partial_{v_i}u_2)^T \frac{\partial^2(\partial_{x_i}V)}{\partial x^2} u_{2}\right\}f_{\infty}dxdv\\-\frac{8\sigma}{\nu}\int_{\mathbb{R}^{2n}}\frac{\tau \nu}{2\sigma}u^T_{2}\left(\frac{\partial^2  V}{\partial x^2}+\frac{2\varepsilon^2-\nu^2}{4}I\right)u_{2}f_{\infty}dxdv.
  \end{multline*}
  The right hand side of this inequality  is a quadratic polynomial with respect to $\partial_{v_i}u_{2}, \, \, i\in \{1,...,n\},$ and  $u_2.$ The corresponding matrix of this quadratic polynomial is 
  \begin{equation}\label{S2:matrix}  \begin{pmatrix}
 \nu \left(\frac{\partial^2  V}{\partial x^2}+\frac{2\varepsilon^2-\nu^2}{4}I\right)&0&...&0&-\frac{1}{2}\frac{\partial^2(\partial_{x_1}V)}{\partial x^2} \\
 0&\nu \left(\frac{\partial^2  V}{\partial x^2}+\frac{2\varepsilon^2-\nu^2}{4}I\right)&...&0&-\frac{1}{2}\frac{\partial^2(\partial_{x_2}V)}{\partial x^2}\\ 

 ...&...&...&...&...\\
 0&0&...&\nu\left(\frac{\partial^2  V}{\partial x^2}+\frac{2\varepsilon^2-\nu^2}{4}I\right)&-\frac{1}{2}\frac{\partial^2(\partial_{x_n}V)}{\partial x^2}\\
 -\frac{1}{2}\frac{\partial^2(\partial_{x_1}V)}{\partial x^2}&-\frac{1}{2}\frac{\partial^2(\partial_{x_2}V)}{\partial x^2}&...&-\frac{1}{2}\frac{\partial^2(\partial_{x_n}V)}{\partial x^2}&\frac{\tau \nu}{2\sigma}\left(\frac{\partial^2  V}{\partial x^2}+\frac{2\varepsilon^2-\nu^2}{4}I\right)
 \end{pmatrix}.
 \end{equation}
    Because of $ \frac{\partial^2  V}{\partial x^2}+\frac{2\varepsilon^2-\nu^2}{4}I> \frac{\partial^2  V}{\partial x^2}+cI$ and  Assumption \ref{A1}, \eqref{S2:matrix} is positive definite and we get 
    $$\displaystyle\frac{d}{dt} S(f(t))+(\nu-\tau-\varepsilon)S(f(t))\leq 0$$
  { and by Gr\"onwall's lemma 
    \begin{equation}\label{Gron.2}
    S(f(t))\leq e^{-(\nu-\tau-\varepsilon)t}S(f_0).
    \end{equation}
     Similar to \eqref{L^2les S}, we have 
    \begin{equation}\label{L^2<eps.S}
  \int_{\mathbb{R}^{2n}}\left(\frac{f(t)}{f_{\infty}}-1\right)^2 f_{\infty}dxdv\leq \frac{1}{2C_{PI}\eta} S(f(t))\leq \frac{1}{2C_{PI}\eta} e^{-(\nu-\tau-\varepsilon)t}S(f_0).
    \end{equation}
  The functional 
   $$\int_{\mathbb{R}^{2n}}\left|\nabla_{x}\left(\frac{f(t)}{f_{\infty}}\right)\right|^2 f_{\infty}dxdv+\int_{\mathbb{R}^{2n}}\nabla^T_{v}\left(\frac{f(t)}{f_{\infty}}\right)\left(\frac{\partial^2 V}{\partial x^2}+ (1-\alpha_0) I\right)\nabla_{v}\left(\frac{f(t)}{f_{\infty}}\right)f_{\infty} dxdv$$  and $S(f(t))$ are equivalent because of (see Lemma \ref{lem:mat.inq.} in Appendix \ref{6.2})
\begin{equation}\label{mat.in2}
\frac{1}{1+\frac{\nu^2+2\varepsilon^2}{4}+\sqrt{\left(1-\frac{\nu^2+2\varepsilon^2}{4}\right)^2+\nu^2}}P \leq \begin{pmatrix}
I&0\\ 0& \frac{\partial^2  V}{\partial x^2}+(1-\alpha_0)I
\end{pmatrix}\leq \frac{1+\frac{\nu^2+2\varepsilon^2}{4}+\sqrt{\left(1-\frac{\nu^2+2\varepsilon^2}{4}\right)^2+\nu^2}}{2\varepsilon^2}P.
\end{equation}   
This equivalence, and \eqref{L^2<eps.S} imply \eqref{main.est}.}

 \subsubsection*{Case $(c)$ and $(d),$ exponential decay:}
Assume $c>-\frac{\nu^2}{4}.$ { For some $\gamma \geq 0$ to be chosen later,  we consider the  functional
 \begin{align}\label{Phi}
\Phi(f(t)) \colonequals & \gamma \int_{\mathbb{R}^{2n}}\left(\frac{f}{f_{\infty}}-1\right)^2 f_{\infty}d xdv+S(f(t))\nonumber\\= &\gamma \int_{\mathbb{R}^{2n}}\left(\frac{f}{f_{\infty}}-1\right)^2 f_{\infty}dxdv+2\int_{\mathbb{R}^{2n}}u^TPuf_{\infty}dxdv.
 \end{align}}
 {Using \eqref{t.der.L^2} and \eqref{SKFP}  its time derivative reads}
 $$\frac{d \Phi(f(t))}{dt}=-4\sigma\int_{\mathbb{R}^{2n}}\left\{\sum_{i=1}^n (\partial_{v_i}u)^TP\partial_{v_i} u\right\}f_{\infty}dxdv-2\int_{\mathbb{R}^{2n}}  u^T\left\{QP+PQ^T+\gamma D\right\}u f_{\infty}dxdv$$
\begin{equation}\label{time.der}-2\int_{\mathbb{R}^{2n}}  u^T\left\{[\nabla_x V\cdot \nabla_v-v\cdot \nabla_x+\nu v\cdot\nabla_v-\sigma \Delta_v]P\right\} u f_{\infty}dxdv.
\end{equation}
 Let  $a,$ to be chosen later, be any number such that $a\geq c+\frac{\nu^2}{4}>0$ and $a+\alpha_0>\frac{\nu^2}{4}.$ 
We consider the matrix 
\begin{equation}\label{P-bc}
P(x)\colonequals \begin{pmatrix}
2I&\nu I\\
\nu I& 2\frac{\partial^2 V(x)}{\partial x^2}+2aI
\end{pmatrix}.
\end{equation}
Then, by Lemma \ref{gen.mat.inq} we have
\begin{equation}\label{aux,mat.ineq1}
Q(x)P(x)+P(x)Q^T(x)+\gamma D\geq (\nu-\delta)P(x), \, \, \, \, \, \forall x \in \mathbb{R}^d,
\end{equation}
with a constant $\delta$ defined in \eqref{delta}. If $\gamma$  is large enough, \eqref{delta} shows that  $\delta\in (0,\nu-\tau).$  

The choice of the matrix $P$ in \eqref{P-bc}, \eqref{time.der}, and \eqref{aux,mat.ineq1} lets us estimate
\begin{multline}\label{timeder:Phi}
\frac{d \Phi(f(t))}{dt}\leq  -4\sigma\int_{\mathbb{R}^{2n}}\left\{\sum_{i=1}^n(\partial_{v_i}u)^T P \partial_{v_i}u \right\}f_{\infty}dxdv\\-(\nu-\delta)S(f(t))+4\int_{\mathbb{R}^{2n}} u^T\begin{pmatrix}
0&0\\
0& \frac{\partial^2 (v\cdot \nabla_x V)}{\partial x^2}
\end{pmatrix} u f_{\infty}dxdv.
\end{multline}
{Similar computations as for Case $(a)$ as well as \eqref{S2:S} (but with $\varepsilon^2=2a$) lead to} 
 \begin{multline*}\displaystyle\frac{d}{dt} \Phi(f(t))+(\nu-\delta-\tau)S(f(t))\\ \leq  -\frac{8\sigma}{\nu}\sum_{i=1}^n\int_{\mathbb{R}^{2n}}\left\{\nu(\partial_{v_i}u_2)^T\left(\frac{\partial^2  V}{\partial x^2}+\frac{4a-\nu^2}{4}I\right)\partial_{v_i}u_{2}-(\partial_{v_i}u_{2})^T \frac{\partial^2(\partial_{x_i}V)}{\partial x^2} u_{2}\right\}f_{\infty}dxdv\\-\frac{8\sigma}{\nu}\int_{\mathbb{R}^{2n}}\frac{\tau \nu}{2\sigma}u^T_{2}\left(\frac{\partial^2  V}{\partial x^2}+\frac{4a-\nu^2}{4}I\right)u_{2}f_{\infty}dxdv.
 \end{multline*}
{The two integrands of the right hand side are together a quadratic polynomial of $ \partial_{v_i}u_{2},$ $i\in \{1,...,n\},$ and $u_2,$} and  its corresponding matrix is 
 \begin{equation}\label{matrix3}  \begin{pmatrix}
 \nu \left(\frac{\partial^2  V}{\partial x^2}+\frac{4a-\nu^2}{4}I\right)&0&...&0&-\frac{1}{2}\frac{\partial^2(\partial_{x_1}V)}{\partial x^2} \\
 0&\nu\left(\frac{\partial^2  V}{\partial x^2}+\frac{4a-\nu^2}{4}I\right)&...&0&-\frac{1}{2}\frac{\partial^2(\partial_{x_2}V)}{\partial x^2}\\ 
 
 ...&...&...&...&...\\
 0&0&...&\nu \left(\frac{\partial^2  V}{\partial x^2}+\frac{4a-\nu^2}{4}I\right)&-\frac{1}{2}\frac{\partial^2(\partial_{x_n}V)}{\partial x^2}\\
 -\frac{1}{2}\frac{\partial^2(\partial_{x_1}V)}{\partial x^2}&-\frac{1}{2}\frac{\partial^2(\partial_{x_2}V)}{\partial x^2}&...&-\frac{1}{2}\frac{\partial^2(\partial_{x_n}V)}{\partial x^2}&\frac{\tau \nu}{2\sigma}\left(\frac{\partial^2  V}{\partial x^2}+\frac{4a-\nu^2}{4}I\right)
 \end{pmatrix}.
 \end{equation}
 Because of $a-\frac{\nu^2}{4}\geq c$ and Assumption \ref{A1}, the matrix \eqref{matrix3}  is positive semi-definite, thus, we have  \begin{equation}\label{phiS}
 \displaystyle\frac{d}{dt} \Phi(f(t))+(\nu-\tau-\delta)S(f(t))\leq 0.
 \end{equation} The estimate $P(x)\geq \eta I$  ($\eta>0 $ defined in \eqref{eta}) and the Poincar\'e inequality \eqref{Poincare} imply 
 \begin{equation*}
 \int_{\mathbb{R}^{2n}}\left(\frac{f}{f_{\infty}}-1\right)^2 f_{\infty}dxdv\leq \frac{1}{  2\eta C_{PI}} S(f(t))
  \end{equation*}
and so $$ \frac{1}{1+\frac{\gamma}{2\eta C_{PI}}}\Phi(f(t))\leq S(f(t)).$$  This estimate  and \eqref{phiS} let us  conclude 
 \begin{equation}\label{Gron.inq}
 \frac{d}{dt} \Phi(f(t))+2\lambda \Phi(f(t))\leq 0
 \end{equation}
  for  
  \begin{equation}\label{lambda max}2\lambda=\frac{\nu-\tau-\delta}{1+\frac{\gamma}{2\eta C_{PI}}}>0.
  \end{equation}
 By Gr\"onwall's lemma we obtain
  \begin{equation}\label{Gron.3}
  \Phi(f(t))\leq e^{-2\lambda t} \Phi(f_0).
  \end{equation}
One can check that (see Lemma \ref{lem:mat.inq.} in Appendix \ref{6.2})
\begin{equation}\label{mat.in3}
\frac{1}{a+\alpha_0+1+\sqrt{(a+\alpha_0-1)^2+\nu^2}} P \leq \begin{pmatrix}
I&0\\ 0& \frac{\partial^2  V}{\partial x^2}+(1-\alpha_0)I
\end{pmatrix}\leq \frac{a+\alpha_0+1+\sqrt{(a+\alpha_0-1)^2+\nu^2}}{4(a+\alpha_0)-\nu^2} P.
\end{equation} 
Hence, $S(f(t))$ is equivalent to the functional $$\int_{\mathbb{R}^{2n}}\left|\nabla_{x}\left(\frac{f(t)}{f_{\infty}}\right)\right|^2 f_{\infty}dxdv+\int_{\mathbb{R}^{2n}}\nabla^T_{v}\left(\frac{f(t)}{f_{\infty}}\right)\left(\frac{\partial^2 V}{\partial x^2}+ (1-\alpha_0) I\right)\nabla_{v}\left(\frac{f(t)}{f_{\infty}}\right)f_{\infty} dxdv.$$ 
 Subsequently, $\Phi(f(t))$ and  the functional on the left hand side of \eqref{main.est} are equivalent. This equivalence and \eqref{Gron.3} let us  obtain \eqref{main.est}.
  
   \subsubsection*{Case $(c)$ and $(d),$ estimated decay rate:}
Next, we shall estimate $\lambda$ from \eqref{lambda max} explicitly, and  we shall choose the parameters $a$ and $\gamma$ such that $\lambda$ is (rather) large.
  By \eqref{eta} and \eqref{quad.eq}, $\eta=\eta(a)$ and $\delta=\delta(a,\gamma)$ are  functions of $a\in [c+\frac{\nu^2}{4}, \infty)\bigcap (\frac{\nu^2}{4}-\alpha_0, \infty)$ and $\gamma\in [0,\infty).$   
  {Since $\delta>0,$ and $\eta$ is monotonically increasing up to 2, we have the following uniform estimate and  choice of the decay rate: }
  $$2\lambda\colonequals \sup_{a\in [c+\frac{\nu^2}{4}, \infty)\bigcap (\frac{\nu^2}{4}-\alpha_0, \infty),\, \gamma\geq 0}\frac{\nu-\tau-\delta(a,\gamma)}{1+\frac{\gamma}{2\eta(a) C_{PI}}}\leq \sup_{\gamma\geq 0}\frac{\nu-\tau}{1+\frac{\gamma}{4C_{PI}}}\leq \nu-\tau.$$ 
Next, we shall estimate this supremum (in fact it is a maximum). First we introduce a new variable  $s:=\displaystyle \frac{\gamma \sigma}{4a\sqrt{a+\alpha_0-\frac{\nu^2}{4}}}\in [0,\infty),$ then \begin{equation*}\delta(a,\gamma)= \frac{a}{\sqrt{a+\alpha_0-\frac{\nu^2}{4}}}(\sqrt{1+s^2}-s).
  \end{equation*}
  With the notations $\displaystyle A(a):=\frac{1+ a+\alpha_0+ \sqrt{( a+\alpha_0-1)^2+\nu^2}}{2\sigma C_{PI}}>0$ and $\displaystyle B(a):=\frac{a}{\sqrt{a+\alpha_0-\frac{\nu^2}{4}}}>0,$  we have $$2\lambda=\max_{a\in [c+\frac{\nu^2}{4}, \infty)\bigcap (\frac{\nu^2}{4}-\alpha_0, \infty),\, s\geq 0}\frac{\nu-\tau-B(a)(\sqrt{1+s^2}-s)}{1+A(a)B(a)s}.$$ 
 Next, we shall fix the parameter $a.$ To estimate $\lambda$ as accurately as possible,  we choose $a$ as the argmin of  $B(a)$ such that $\nu-\tau-B(a)(\sqrt{1+s^2}-s)$ is maximal with respect to $a.$     
 The minimal value of $B(a)$ is 
$$\displaystyle \min_{a\in [c+\frac{\nu^2}{4}, \infty)\bigcap (\frac{\nu^2}{4}-\alpha_0, \infty)} B(a)=\begin{cases}\displaystyle B(a_1)=\frac{c+\frac{\nu^2}{4}}{\sqrt{c+\alpha_0}} \, \,  \, \,  \, \, \, \, \text{ if } \,\, c+2\alpha_0>\frac{\nu^2}{4}\\ \displaystyle
B(a_2)=\sqrt{{\nu^2}-4\alpha_0} \, \, \text{ if } \,\, c+2\alpha_0\leq \frac{\nu^2}{4} \end{cases},$$
and this minimum is attained at $a_1\colonequals {c+\frac{\nu^2}{4}}$  if  $ c+2\alpha_0>\frac{\nu^2}{4}$ (i.e. in Case $(c)$), and $a_2\colonequals 
2(\frac{\nu^2}{4}-\alpha_0)$  if $ c+2\alpha_0\leq \frac{\nu^2}{4}$ (i.e. in Case $(d)$). \\

  If $c+2\alpha_0>\frac{\nu^2}{4},$ then  $c>-\alpha_0$ and so $a$ varies in $$ [c+\frac{\nu^2}{4}, \infty)\bigcap (\frac{\nu^2}{4}-\alpha_0, \infty)=[c+\frac{\nu^2}{4}, \infty)=[a_1, \infty).$$ Since
   $A(a)$ is increasing,   both  $A(a)$ and $B(a)$ attain their minimal values at $a_1.$ Thus, $a_1$ is optimal, i.e.   
$$  \max_{a}\frac{\nu-\tau-B(a)(\sqrt{1+s^2}-s)}{1+A(a)B(a)s}=\frac{\nu-\tau-B(a_1)(\sqrt{1+s^2}-s)}{1+A(a_1)B(a_1)s}.$$\\

If $c+2\alpha_0\leq \frac{\nu^2}{4},$  $a_2=2(\frac{\nu^2}{4}-\alpha_0)$ may not be optimal as $A(a)$ does not attain its minimum at this point, i.e.
$$  \max_{a}\frac{\nu-\tau-B(a)(\sqrt{1+s^2}-s)}{1+A(a)B(a)s}\geq\frac{\nu-\tau-B(a_2)(\sqrt{1+s^2}-s)}{1+A(a_2)B(a_1)s}.$$ But it is the optimal choice when $s=0$ and so it gives a good approximation   if $s$ is small.  From now on we assume that $a$ is fixed as
\begin{equation}\label{a}
\displaystyle a\colonequals \begin{cases} a_1 ={c+\frac{\nu^2}{4}} \, \, \, \, \, \, \,  \, \,  \, \, \, \, \text{ if } \,\, c+2\alpha_0>\frac{\nu^2}{4}\\ 
a_2=2(\frac{\nu^2}{4}-\alpha_0) \, \, \text{ if } \,\, c+2\alpha_0\leq \frac{\nu^2}{4} \end{cases}.
\end{equation}
Note that this choice is independent of $s.$\\
 Let $\displaystyle \Lambda(a,s)\colonequals\frac{\nu-\tau-B(a)(\sqrt{1+s^2}-s)}{1+A(a)B(a)s}$ and we seek its maximum with respect to $s\in [0, \infty).$ We compute
 \begin{multline}\label{part Lambda}
  \partial_s\Lambda(a,s)\\=\frac{B(a)}{(1+A(a)B(a)s)^2\sqrt{s^2+1}}\left([1-(\nu-\tau-B(a))A(a)]\sqrt{s^2+1}-A(a)B(a)(\sqrt{s^2+1}-1)-s \right).
  \end{multline}
 
 If $1-(\nu-\tau-B(a))A(a)\leq 0,$ then $\partial_s\Lambda(a,s)\leq 0$ which implies that $\Lambda(a,s)$ is a decreasing function of $s$ and the maximum in $[0,\infty) $ is attained at $s=0.$ \\
 
 If $1-(\nu-\tau-B(a))A(a)>0,$ then $\partial_s\Lambda(a,0)=B(a)[1-(\nu-\tau-B(a))A(a)]>0$ and  $\Lambda(a,s)$ is  increasing  in a neighborhood of $s=0.$ We also see $\partial_s\Lambda(a,s)$ is negative if $s$ is large enough (since $\nu-\tau>0$). This means that $\Lambda(a,s)$ starts to grow at $s=0$ and it decreases as $s\to \infty.$ Therefore, there is a point in $(0,\infty)$ at which $\Lambda(a,s)$ takes its maximum. Setting $\partial_s\Lambda(a,s)=0$ we obtain
 $$[1-(\nu-\tau)A(a)]\sqrt{s^2+1}-s+A(a)B(a)=0.$$
 It has only one solution in $(0,\infty)$ given by 
 \begin{small}
 \begin{equation}\label{s}
 s(a)=\begin{cases}
 \frac{A^2(a)B^2(a)-1}{2A(a)B(a)}& \text{ if }  (\nu-\tau)A(a)=2  \\
\frac{1}{\nu-\tau}\left[\left|\frac{(\nu-\tau)A(a)-1}{(\nu-\tau)A(a)-2}\right|\sqrt{B^2(a)+2(\nu-\tau)A^{-1}(a)-(\nu-\tau)^2}-\frac{B(a)}{(\nu-\tau)A(a)-2}\right] & \text{ if }  (\nu-\tau)A(a)\neq 2  
  \end{cases}
  \end{equation}
  \end{small}
and at this point  $\Lambda(a,s)$ attains its maximum with respect to $s.$

Considering the computations above, we conclude that the decay rate can be estimated by:
\begin{equation}
 \displaystyle 2\lambda=\begin{cases} \nu-\tau-B(a)& \text{ if } \nu-\tau\geq A^{-1}(a)+B(a)\\ 
\frac{\nu-\tau-B(a)(\sqrt{1+s^2(a)}-s(a))}{1+A(a)B(a)s(a)} & \text{ if }  \nu-\tau< A^{-1}(a)+B(a) \end{cases},
\end{equation}
where two cases correspond to the two cases discussed after \eqref{part Lambda}. Moreover, $a$ and $s(a)$ are defined in \eqref{a} and \eqref{s}, respectively. If we denote $A_1:=A(a_1),$ $A_2:=A(a_2),$ $s_1:=s(a_1)$ and $s_2:=s(a_2)$ and take into account that $B(a_1)=\frac{c+\frac{\nu^2}{4}}{\sqrt{c+\alpha_0}}$ and $B(a_2)=\sqrt{{\nu^2}-4\alpha_0}, $ we obtain the explicit decay rates stated in the theorem.
  \subsubsection*{Case (e):}
 Let $V(x)$ be a quadratic function of $x$ and  $\frac{\partial^2  V}{\partial x^2}$ be positive definite. Then, $\frac{\partial^2  (\partial_{x_i}V)}{\partial x^2}$ are zero matrices for all $i\in\{1,...,n\}.$  Thus, $V$ satisfies Assumption \ref{A1} with $\tau=0,$ $-c=\alpha_0>0.$  

 If $\alpha_0<\frac{\nu^2}{4},$ then  $c+2\alpha_0=\alpha_0<\frac{\nu^2}{4}$ which falls into  Case $(d).$ 
The constant  in the Poincar\'e inequality \eqref{Poincare} equals  $C_{PI}=\frac{\nu}{\sigma}\min\{1, \alpha_0\}$ (see \cite{OSI}). It lets us compute $A_2^{-1}$ explicitly: $$\displaystyle A_2^{-1}=\frac{2\nu \min\{1, \alpha_0\}}{1+ \frac{\nu^2}{2}-\alpha_0+ \sqrt{( \frac{\nu^2}{2}-\alpha_0-1)^2+\nu^2}}. $$
In Appendix \ref{6.3} we prove the following inequality:
   \begin{equation}\label{aA}
 \nu\geq A^{-1}_2+\sqrt{{\nu^2}-4\alpha_0}.\end{equation}  
 Thus Case $(d)$ implies \begin{equation}\label{rate a<}\lambda=  \frac{\nu-\sqrt{{\nu^2}-4\alpha_0}}{2}.
 \end{equation}

 If $\alpha_0\geq\frac{\nu^2}{4},$ the decay rate is explicit by Case $(a)$ and Case $(b):$  \begin{equation}\label{rate}\lambda=\begin{cases} {\frac{\nu}{2}}& \text{ if } \alpha_0>\frac{\nu^2}{4}\\
\frac{ \nu-\varepsilon}{2}& \text{ if } \alpha_0=\frac{\nu^2}{4}, \text{  for any } \varepsilon\in (0,\nu)
  \end{cases}.
 \end{equation}

 We now prove that the decay rates in \eqref{rate a<} and \eqref{rate} are sharp:  From Corollary \ref{Cor.}   $$\int_{\mathbb{R}^{2n}}\left(\frac{f(t)}{f_{\infty}}-1\right)^2 f_{\infty}dxdv 
   \leq C e^{-2\lambda t} \int_{\mathbb{R}^{2n}} \left(\frac{f_0}{f_{\infty}} -1\right)^2 \left(\left|\left|\frac{\partial^2V}{\partial x^2}\right|\right|^2+1\right) f_{\infty}dxdv,\, \, \, \, \, \, \forall t\geq t_0$$
   holds with the same $\lambda$ given in \eqref{rate a<} and \eqref{rate}.
 Since $\left|\left|\frac{\partial^2V}{\partial x^2}\right|\right|+1$ is  constant, this estimate implies 
 \begin{equation}\label{prop.l}
 \sup_{1\neq \frac{f_0}{f_{\infty}} \in L^2(\mathbb{R}^d, f_{\infty})}\frac{||f(t)/f_{\infty}-1||_{L^2(\mathbb{R}^d, f_{\infty})}}{||f_0/f_{\infty}-1||_{L^2(\mathbb{R}^d, f_{\infty})}}\leq \tilde{C} e^{-\lambda t},\, \, \, \, \, \, \forall t\geq t_0
 \end{equation}
 for some constant $\tilde{C}>0.$ On the one hand
this means that  the estimated decay rate $\lambda$ can not be larger than the (true) decay rate of the propagator norm given on the left hand side of \eqref{prop.l}. On the other hand, Proposition \ref{prop.}  gives the sharp decay rates for this propagator norm. The decay rates in \eqref{rate a<} and \eqref{rate} coincide with the  ones in Proposition \ref{prop.}  except in the case of $\alpha_0= \frac{\nu^2}{4}.$ Thus, the exponential decay rates  in Case $(a)$  and Case $(d)$ are  sharp.
When $\alpha_0= \frac{\nu^2}{4},$  Proposition \ref{prop.} provides the sharp decay $(1+t)e^{-\frac{\nu}{2} t}$ for the propagator norm.  
  Hence, \eqref{main.est} can hold with rates $\lambda=\frac{\nu-\varepsilon}{2}$ for any small fixed $\varepsilon \in (0,\nu),$ but it does not hold for $\varepsilon=0.$
  \end{proof}
 \subsection{Proof of Proposition \ref{prop.}}
\begin{proof}[{Proof of Proposition \ref{prop.}}] Let $V$ be a quadratic polynomial and   $\frac{\partial^2 V}{\partial x^2}\equalscolon M^{-1} \in \mathbb{R}^{n\times n} $ be positive definite.  Then there are   $x_0\in \mathbb{R}^n$ and $\mathcal{C}\in \mathbb{R}$ such that   $V(x)=\frac{{(x-x_0)}^T M^{-1}(x-x_0)}{2}+\mathcal{C},$ $\forall x \in \mathbb{R}^n.$  Since  the change $x \to x+x_0$ does not affect the supremum in \eqref{l^2 t^2} and only  the gradient of $V$ appears in \eqref{KFP}, without loss of generality  we assume  that  $x_0=0$ and $\mathcal{C}=0.$     
\subsubsection*{Step 1, reformulation as an ODE-problem:}
To this end we use Theorem \ref{prop.norm}. We check the conditions of this theorem for the kinetic Fokker-Planck equation. With the notation 
 $\xi=\begin{pmatrix}
 x\\v
 \end{pmatrix},$ we write
\begin{equation}\label{K}
E(\xi)=\frac{\nu}{\sigma}\left(V(x)+\frac{|v|^2}{2}\right)=\frac{\nu}{\sigma}\left(\frac{{x}^T M^{-1} x}{2}+\frac{|v|^2}{2}\right)=\frac{1}{2}\xi^T\begin{pmatrix}
  \frac{\nu}{\sigma}M^{-1}& 0\\
  0&\frac{\nu}{\sigma}I
  \end{pmatrix} \xi=\frac{\xi^TK^{-1}\xi}{2} 
  \end{equation}
  with $K^{-1}:=\frac{\nu}{\sigma}\begin{pmatrix}
 M^{-1}& 0\\
  0&I
  \end{pmatrix}.$
  From \eqref{D and R} we see that $ \text{Ker}D=\{(\psi,0)^T: \, \, \psi\in \mathbb{R}^n\}.$  Let $(\psi,0)^T\in  \text{Ker}D,$ then its image under $K^{-1}(D-R)$ is
  $$K^{-1}(D-R)\begin{pmatrix}
  \psi \\ 0
  \end{pmatrix}=\begin{pmatrix}
  0 & M^{-1}\\
  -I & \nu I
  \end{pmatrix}
   \begin{pmatrix}
  \psi \\ 0
  \end{pmatrix}=\begin{pmatrix}
  0\\-\psi
  \end{pmatrix} $$ and it is in $\text{Ker}D$ iff $\psi=0.$ Therefore, there is no non-trivial $K^{-1}(D-R)-$invariant subspace of $\text{Ker}D.$ 
 Next we compute the eigenvalues $\beta$ of $K^{-1/2}(D+R)K^{-1/2}=\begin{pmatrix}
0&-M^{-1/2}\\
M^{-1/2}&\nu I  
  \end{pmatrix}:$ 
\begin{multline*}
\begin{vmatrix}
-\beta I&-M^{-1/2}\\
M^{-1/2}&(\nu-\beta)I
\end{vmatrix}=\begin{vmatrix}
-\beta I&0\\
M^{-1/2}&(\nu-\beta)I-\beta^{-1}M^{-1}
\end{vmatrix}\\= \text{det}(\beta(\beta-\nu)I+M^{-1})=\prod_{i=1}^n(\beta^2-\nu \beta+\alpha_i)=0,
\end{multline*}
where $\alpha_i,$  
 $ i\in \{1,...,n\}$ denote the eigenvalues of $M^{-1}.$ By solving the latter equation, we find that the eigenvalues of $K^{-1/2}(D+R)K^{-1/2}$ are $\beta^{-}_{i}=\frac{\nu-\sqrt{\nu^2-4\alpha_i}}{2},$ $\beta^{+}_{i}=\frac{\nu+\sqrt{\nu^2-4\alpha_i}}{2},$  $ i\in \{1,...,n\}.$
  If $\alpha_0>0$ is the smallest eigenvalue of $M^{-1},$ then  $$\mu\colonequals \min_i\{\textsf{Re}(\beta_i): \beta_i \text{ is an eigenvalue of }  K^{-1/2}(D+R)K^{-1/2}\}=\begin{cases} \frac{\nu}{2}&  \text{ if } \alpha_0\geq \frac{\nu^2}{4}\\
 \frac{\nu-\sqrt{\nu^2-4\alpha_0}}{2} & \text{ if } \alpha_0<\frac{\nu^2}{4}  \end{cases}.$$ 
 Hence $\mu$ is positive, so $K^{-1/2}(D+R)K^{-1/2}$ and $(D+R)K^{-1}$ are positive stable.  Therefore,  Theorem \ref{prop.norm} applies to  the kinetic Fokker-Planck equation.
 \subsubsection*{Step 2, decay rates of the ODE-solution:}
We consider the ODE $$\dot{\xi}(t)=-K^{-1/2}(D+R)K^{-1/2}\xi$$ with the initial data $\xi(0)=\xi_0.$ Since $K^{-1/2}(D+R)K^{-1/2}$ is positive stable, the solution ${\xi}(t)$ is stable. To quantify the decay rate, we continue to analyze the eigenvalues of $K^{-1/2}(D+R)K^{-1/2}.$  
Let $m_i$ be the multiplicity of $\alpha_i>0$ as an eigenvalue of $M^{-1}$ (now  the $\alpha_i$ with $i\in\{1,...,\tilde{n}\}$ are labeled without multiplicity). Since $M^{-1}$ is symmetric, there are linearly independent eigenvectors $\psi_{ij}\in \mathbb{R}^n, \, \, j \in \{1,..., m_i\}$ of $M^{-1}$ corresponding to $\alpha_i.$ Then we can check that  the vectors \begin{equation}\label{eigvec1}
\begin{pmatrix}
 -\frac{\alpha_i^{1/2}}{\beta^{-}_{i}}\psi_{ij}\\
 \psi_{ij}
 \end{pmatrix}\in  \mathbb{R}^{2n}, \, \, j \in \{1,..., m_i\}
 \end{equation}
  are linearly independent eigenvectors of $K^{-1/2}(D+R)K^{-1/2}$ corresponding to $\beta^{-}_{i},$ $i\in\{1,...,\tilde{n}\}.$ Moreover, these vectors form a basis of the space of eigenvectors corresponding to $\beta^{-}_{i}.$ Similarly, the  vectors \begin{equation}\label{eigvec2}
\begin{pmatrix}
 -\frac{\alpha_i^{1/2}}{\beta^{+}_{i}}\psi_{ij}\\
 \psi_{ij}
 \end{pmatrix}\in  \mathbb{R}^{2n}, \, \, j \in \{1,..., m_i\}.
 \end{equation}
 satisfy the same property  for $\beta^{+}_{i}.$

 If $\alpha_i\neq\frac{\nu^2}{4}$ for all $i\in\{1,...,\tilde{n}\}$ (i.e., $\beta^{-}_{i}\neq \beta^{+}_{i}),$ the  algebraic multiplicities of $\beta^{-}_i$  and $\beta^{+}_i$ are  equal to $m_i.$ Then  $\beta^{-}_i$ (resp. $\beta^{+}_i$) has  $m_i$ eigenvectors given by \eqref{eigvec1} (resp. \eqref{eigvec2}). Thus, the geometric multiplicities of $\beta^{-}_i$ and $\beta^{+}_i$ also equal $m_i.$  In particular, $K^{-1/2}(D+R)K^{-1/2}$ is  diagonalizable.

  If $\alpha_{i_0}=\frac{\nu^2}{4}$ for some $i_0\in\{1,...,\tilde{n}\},$ then the algebraic multiplicity of $\beta^{-}_{i_0}=\beta^{+}_{i_0}=\frac{\nu}{2}$  equals $2m_{i_0}.$  Since the vectors \eqref{eigvec1} and \eqref{eigvec2} coincide in this case, the geometric multiplicity of $\frac{\nu}{2}$ equals $m_{i_0}.$ Thus, in this case, $\frac{\nu}{2}$ is a defective\footnote{An eigenvalue is \textit{defective} if its geometric multiplicity is strictly less than its algebraic multiplicity.} eigenvalue of $K^{-1/2}(D+R)K^{-1/2}$ with the corresponding eigenvectors
  \begin{equation}\label{eigen v 0}\begin{pmatrix}
 -\psi_{i_0 j}\\
 \psi_{i_0 j}
 \end{pmatrix}\in  \mathbb{R}^{2n}, \, \, j \in \{1,..., m_{i_0}\}.
 \end{equation} 
  By solving the following linear system (with respect to $\xi$)
 $$K^{-1/2}(D+R)K^{-1/2}\xi-\frac{\nu}{2}\xi=\begin{pmatrix}
-\frac{\nu}{2} I&-M^{-1/2}\\
M^{-1/2}&\frac{\nu}{2}I
\end{pmatrix}\xi=\begin{pmatrix}
 -\psi_{i_0 j}\\
 \psi_{i_0 j}
 \end{pmatrix}, \, \, \, \, \xi \in \mathbb{R}^{2d}, $$
 we find that  the solution
$
\xi= \begin{pmatrix}
 0\\
 \frac{2}{\nu}\psi_{i_0 j}
 \end{pmatrix}
 $
  is a generalized eigenvector  of $\frac{\nu}{2}$ corresponding to the eigenvector $\begin{pmatrix}
 -\psi_{i_0 j}\\
 \psi_{i_0 j}
 \end{pmatrix}.$
 Since $\psi_{i_0 j},$  $j \in \{1,..., m_{i_0}\}$ are linearly independent,
the vectors  \begin{equation}\label{geigv 0}
 \begin{pmatrix}
 0\\
 \frac{2}{\nu}\psi_{i_0 j}
 \end{pmatrix}, \, \, j \in \{1,..., m_{i_0}\}
 \end{equation} form a set of linearly independent generalized eigenvectors  of $\frac{\nu}{2}.$ Since the vectors in \eqref{eigen v 0} and \eqref{geigv 0} are linearly independent and their total number equals $2m_{i_0}$ (which is the algebraic multiplicity of $\frac{\nu}{2}$), we conclude that each eigenvector of $\frac{\nu}{2}$ has only one generalized eigenvector. Therefore, all Jordan blocks associated to $\frac{\nu}{2} $  have the same size $2\times 2.$ 
 In particular, if $\alpha_0=\frac{\nu^2}{4},$ then the eigenvalue $\mu=\frac{\nu}{2} $ is defective and the maximal size of the Jordan blocks associated to $\frac{\nu}{2} $ is 2.

  Then, the classical stability theory for ODEs shows that 
$$\sup_{1\neq \frac{f_0}{f_{\infty}} \in L^2(\mathbb{R}^d, f_{\infty})}\frac{||f(t)/f_{\infty}-1||_{L^2(\mathbb{R}^d, f_{\infty})}}{||f_0/f_{\infty}-1||_{L^2(\mathbb{R}^d, f_{\infty})}}=\sup_{0\neq \xi_0\in \mathbb{R}^d}\frac{||\xi(t)||_2}{||\xi_0||_2}\asymp \begin{cases}
e^{-\frac{\nu}{2}t},& \text{ if } \alpha_0>\frac{\nu^2}{4} \\
(1+ t)e^{-\frac{\nu}{2}t},& \text{ if } \alpha_0=\frac{\nu^2}{4}\\
 e^{-{\frac{\nu-\sqrt{\nu^2-4\alpha_0}}{2}t}},& \text{ if } \alpha_0<\frac{\nu^2}{4} 
\end{cases}$$  
 as $ t\to \infty.$

\end{proof}

\begin{Remark}
With the eigenvalues of $C\colonequals (D+R)K^{-1}$ $($see \eqref{GFP}, \eqref{K}$)$ obtained at the end of Step 1 in the above proof, the sharpness of the decay rate $\mu$ in the cases 1 and  3 of \eqref{l^2 t^2} would also follow from  \cite[Theorem 6.1]{AE}.
\end{Remark}
\bigskip


\subsection{Proof of Theorem \ref{Hyp.ellip.} and Corollary \ref{Cor.}}

\text{   }

\begin{proof}[{Proof of Theorem \ref{Hyp.ellip.}}]
\subsubsection*{Step 1, an auxiliary inequality:}
As we assume the matrix \eqref{Condition1} is positive semi-definite, then the following submatrices of \eqref{Condition1} are positive semi-definite:
 \begin{equation*}
Y_k\colonequals \begin{pmatrix}
 \nu \left(\frac{\partial^2  V}{\partial x^2}+cI\right)&-\frac{1}{2}\frac{\partial^2(\partial_{x_k}V)}{\partial x^2} \\
 -\frac{1}{2}\frac{\partial^2(\partial_{x_k}V)}{\partial x^2}&\frac{\tau \nu}{2\sigma}\left(\frac{\partial^2  V}{\partial x^2}+cI\right)
 \end{pmatrix} \in \mathbb{R}^{2n\times 2n}, \, \, \, \, \, \, k\in \{1,...,n\}.
 \end{equation*}
Letting  $\delta>0,$ we consider
\begin{equation*}
X_{\delta}\colonequals \begin{pmatrix}I&\delta I\\\delta I&\delta^2 I
\end{pmatrix} 
 \otimes
 \left(\frac{\partial^2  V}{\partial x^2}+cI\right)=\begin{pmatrix}
 \frac{\partial^2  V}{\partial x^2}+cI&\delta \frac{\partial^2  V}{\partial x^2}+\delta cI \\ \delta \frac{\partial^2  V}{\partial x^2}+\delta cI&\delta^2 \frac{\partial^2  V}{\partial x^2}+\delta^2 cI
 \end{pmatrix}\in \mathbb{R}^{2n\times 2n}.
 \end{equation*}
$X_{\delta}$ is positive semi-definite  as it is the Kronecker product \cite[Corollary 4.2.13]{MA} of two positive semi-definite matrices. Hence, we have for all $k \in \{1,...,n\}:$  
\begin{equation*}
\mathrm{Tr}(X^{1/2}_{\delta}Y_kX^{1/2}_{\delta})=\mathrm{Tr}(X_{\delta}Y_k)=(\nu+\delta^2\frac{ \tau \nu}{2\sigma}) \mathrm{Tr} \left[\left(\frac{\partial^2  V}{\partial x^2}+c I\right)^2\right]-\delta \mathrm{Tr}\left[\left(\frac{\partial^2V}{\partial x^2}+cI\right)\frac{\partial^2(\partial_{x_k}V)}{\partial x^2}\right]\geq 0. 
\end{equation*}
This implies
\begin{equation}\label{nonopt.tr}
\frac{2\sigma\nu+\delta^2 \tau \nu}{2\sigma\delta} \mathrm{Tr} \left[\left(\frac{\partial^2  V}{\partial x^2}+c I\right)^2\right]\geq  \mathrm{Tr}\left[\left(\frac{\partial^2V}{\partial x^2}+c I\right)\frac{\partial^2(\partial_{x_k}V)}{\partial x^2}\right] 
\end{equation}
and by minimizing  the constant on the left hand side of \eqref{nonopt.tr} with respect to $\delta$ (i.e.,\,by choosing $\delta=\sqrt{\frac{2\sigma}{\tau}}$), we obtain  
\begin{equation}\label{opt.tr}
\sqrt{\frac{2\tau \nu^2}{\sigma}}\mathrm{Tr} \left[\left(\frac{\partial^2  V(x)}{\partial x^2}+c I\right)^2\right]\geq  \mathrm{Tr}\left[\left(\frac{\partial^2V(x)}{\partial x^2}+cI\right)\frac{\partial^2(\partial_{x_k}V(x))}{\partial x^2}\right] \, \, \, \, \, \, \, \text{for all} \,\, \, \, x \in \mathbb{R}^n.
\end{equation}
\subsubsection*{Step 2, growth estimate for the r.h.s. of \eqref{Vhyp.ell.1}, \eqref{Vhyp.ell.2}:} We denote $\displaystyle u_1\colonequals \nabla_x \left(\frac{f(t)}{f_{\infty}}\right),$  $\displaystyle u_2\colonequals \nabla_v \left(\frac{f(t)}{f_{\infty}}\right),$ and $u\colonequals \begin{pmatrix}
u_1\\u_2
\end{pmatrix}.$
Since $\displaystyle  \frac{f(t)}{f_{\infty}}-1 $ satisfies  $$\partial_t \left(\frac{f(t)}{f_{\infty}}-1\right)=-v \cdot \nabla_x \left(\frac{f(t)}{f_{\infty}}-1\right)+\nabla_x V \cdot \nabla_v \left(\frac{f(t)}{f_{\infty}}-1\right)+\sigma \Delta_v \left(\frac{f(t)}{f_{\infty}}-1\right)-\nu v\cdot\nabla_v \left(\frac{f(t)}{f_{\infty}}-1\right)$$
 and by integrating by parts, we obtain 
\begin{equation}\label{t.der.}
\frac{d}{dt} \displaystyle \int_{\mathbb{R}^{2n}} \left(\frac{f(t)}{f_{\infty}} -1\right)^2f_{\infty}dxdv=-2\sigma\displaystyle \int_{\mathbb{R}^{2n}} |u_2|^2 f_{\infty}dxdv.
\end{equation}
Next, we compute (with $||\cdot||$ denoting the Frobenius norm) 
\begin{multline}\label{t.der.V}
\frac{d}{dt} \displaystyle \int_{\mathbb{R}^{2n}} \left(\frac{f(t)}{f_{\infty}} -1\right)^2 \left|\left|\frac{\partial^2V}{\partial x^2}+c I\right|\right|^2 f_{\infty}dxdv\\
=2\int_{\mathbb{R}^{2n}} \left(\frac{f(t)}{f_{\infty}}-1 \right)\partial_t \left(\frac{f(t)}{f_{\infty}}-1\right) \left|\left|\frac{\partial^2V}{\partial x^2}+c I\right|\right|^2 f_{\infty}dxdv\\=2\int_{\mathbb{R}^{2n}} \left(\frac{f(t)}{f_{\infty}} -1\right)\left[-v \cdot \nabla_x \left(\frac{f(t)}{f_{\infty}}-1\right)+\nabla_x V \cdot \nabla_v \left(\frac{f(t)}{f_{\infty}}-1\right)\right] \left|\left|\frac{\partial^2V}{\partial x^2}+c I\right|\right|^2 f_{\infty}dxdv\\
+2\int_{\mathbb{R}^{2n}} \left(\frac{f(t)}{f_{\infty}} -1\right)\left[\sigma \Delta_v \left(\frac{f(t)}{f_{\infty}}-1\right)-\nu v\cdot\nabla_v \left(\frac{f(t)}{f_{\infty}}-1\right)\right]\left|\left|\frac{\partial^2V}{\partial x^2}+c I\right|\right|^2 f_{\infty}dxdv.
\end{multline}
 Integrating by parts with respect to $v,$ we obtain
\begin{multline}\label{L term}
2\int_{\mathbb{R}^{2n}} \left(\frac{f(t)}{f_{\infty}} -1\right)\left[\sigma \Delta_v \left(\frac{f(t)}{f_{\infty}}-1\right)-\nu v\cdot\nabla_v \left(\frac{f(t)}{f_{\infty}}-1\right)\right] \left|\left|\frac{\partial^2V}{\partial x^2}+c I\right|\right|^2 f_{\infty}dxdv\\=
-2\sigma\displaystyle \int_{\mathbb{R}^{2n}} |u_2|^2 \left|\left|\frac{\partial^2V}{\partial x^2}+c I\right|\right|^2 f_{\infty}dxdv.
\end{multline}
Next, we work on the term in the second line of \eqref{t.der.V}:
\begin{multline}\label{T term}
2\int_{\mathbb{R}^{2n}} \left(\frac{f(t)}{f_{\infty}} -1\right)\left[-v \cdot \nabla_x \left(\frac{f(t)}{f_{\infty}}-1\right)+\nabla_x V \cdot \nabla_v \left(\frac{f(t)}{f_{\infty}}-1\right)\right]\left|\left|\frac{\partial^2V}{\partial x^2}+c I\right|\right|^2 f_{\infty}dxdv\\
=\int_{\mathbb{R}^{2n}} \left(-v \cdot \nabla_x \left(\frac{f(t)}{f_{\infty}}-1\right)^2+\nabla_x V \cdot \nabla_v \left(\frac{f(t)}{f_{\infty}}-1\right)^2\right)\left|\left|\frac{\partial^2V}{\partial x^2}+c I\right|\right|^2 f_{\infty}dxdv\\
=\int_{\mathbb{R}^{2n}} \left(\frac{f(t)}{f_{\infty}}-1\right)^2 \left[v \cdot \nabla_x \left(\left|\left|\frac{\partial^2V}{\partial x^2}+c I\right|\right|^2 f_{\infty}\right)-\nabla_x V \cdot \nabla_v \left( \left|\left|\frac{\partial^2V}{\partial x^2}+c I\right|\right|^2 f_{\infty}\right)\right] dxdv\\
=\int_{\mathbb{R}^{2n}} \left(\frac{f(t)}{f_{\infty}}-1\right)^2 v \cdot \nabla_x \left( \left|\left|\frac{\partial^2V}{\partial x^2}+c I\right|\right|^2\right) f_{\infty}dxdv\\
=
\frac{2\sigma}{\nu}\int_{\mathbb{R}^{2n}} \left(\frac{f(t)}{f_{\infty}} -1\right) u_2\cdot \nabla_x\left(\left|\left|\frac{\partial^2V}{\partial x^2}+c I\right|\right|^2\right) f_{\infty}dxdv\\=\frac{2\sigma}{\nu}\int_{\mathbb{R}^{2n}} \left(\frac{f(t)}{f_{\infty}} -1\right)\sum_{k=1}^n u_{2,k} \partial_{x_k}\left(\left|\left|\frac{\partial^2V}{\partial x^2}+c I\right|\right|^2\right) f_{\infty}dxdv\\= \frac{4\sigma}{\nu}\int_{\mathbb{R}^{2n}} \left(\frac{f(t)}{f_{\infty}} -1\right)\left\{\sum_{k=1}^n u_{2,k}\sum_{i,j=1}^n(\partial^2_{x_i x_j}V+\delta_{ij}c)\partial^2_{x_i x_j}(\partial_{x_k}V)\right\} f_{\infty}dxdv,
\end{multline}
where we integrated by parts twice, and used $-\frac{\nu}{\sigma}vf_{\infty}=\nabla_vf_{\infty}$ and the notations $$u_{2,k}\colonequals \partial_{v_k} \left(\frac{f(t)}{f_{\infty}} \right) \,\,\, \, \text{ and } \, \,\, \, \delta_{ij}\colonequals \begin{cases} 1 \, \, \, \, \text{ if }\,\, i=j\\
0 \, \, \, \, \text{ if }\,\, i\neq j\end{cases}.$$ 
Using the identity 
$$\displaystyle \sum_{i,j=1}^n(\partial^2_{x_i x_j}V+\delta_{ij}c)\partial^2_{x_i x_j}(\partial_{x_k}V)=\mathrm{Tr}\left[\left(\frac{\partial^2V}{\partial x^2}+cI\right)\frac{\partial^2(\partial_{x_k}V)}{\partial x^2}\right],$$ the estimate \eqref{opt.tr}, and the discrete H\"older inequality, \eqref{T term} can be estimated as
\begin{multline}\label{last,term}
 \frac{4\sigma}{\nu}\int_{\mathbb{R}^{2n}} \left(\frac{f(t)}{f_{\infty}} -1\right)\left\{\sum_{i,j,k=1}^n u_{2,k}(\partial^2_{x_i x_j}V+\delta_{ij}c)\partial^2_{x_i x_j}(\partial_{x_k}V)\right\} f_{\infty}dxdv\\=\frac{4\sigma}{\nu}\int_{\mathbb{R}^{2n}} \left(\frac{f(t)}{f_{\infty}} -1\right)\left\{\sum_{k=1}^n u_{2,k}\mathrm{Tr}\left[\left(\frac{\partial^2V}{\partial x^2}+c I\right)\frac{\partial^2(\partial_{x_k}V)}{\partial x^2}\right]\right\} f_{\infty}dxdv\\ \leq 4\sqrt{2 \sigma \tau}\int_{\mathbb{R}^{2n}} \left(\frac{f(t)}{f_{\infty}} -1\right)\left\{\sum_{k=1}^n |u_{2,k}|\mathrm{Tr}\left[\left(\frac{\partial^2V}{\partial x^2}+c I\right)^2\right]\right\} f_{\infty}dxdv \\ \leq 4\sqrt{2 \sigma \tau n}\int_{\mathbb{R}^{2n}} \left(\frac{f(t)}{f_{\infty}} -1\right)|u_{2}|\mathrm{Tr}\left[\left(\frac{\partial^2V}{\partial x^2}+c I\right)^2\right] f_{\infty}dxdv\\
\leq \sigma \int_{\mathbb{R}^{2n}} |u_{2}|^2\mathrm{Tr}\left[\left(\frac{\partial^2V}{\partial x^2}+c I\right)^2\right] f_{\infty}dxdv+8\tau n\int_{\mathbb{R}^{2n}} \left(\frac{f(t)}{f_{\infty}}-1 \right)^2 \mathrm{Tr}\left[\left(\frac{\partial^2V}{\partial x^2}+c I\right)^2\right] f_{\infty}dxdv.  
\end{multline}
Combining the equations from \eqref{t.der.V} to \eqref{last,term} and the identity 
\begin{equation*}\label{TR}
\displaystyle \left|\left|\frac{\partial^2V}{\partial x^2}+c I\right|\right|^2=\mathrm{Tr}\left[\left(\frac{\partial^2V}{\partial x^2}+c I\right)^2\right],
\end{equation*}
 we get 
\begin{multline}\label{t.d.f^2V}
\frac{d}{dt} \displaystyle \int_{\mathbb{R}^{2n}} \left(\frac{f(t)}{f_{\infty}} -1\right)^2 \left|\left|\frac{\partial^2V}{\partial x^2}+c I\right|\right|^2 f_{\infty}dxdv\\ \leq -\sigma \int_{\mathbb{R}^{2n}} |u_{2}|^2\left|\left|\frac{\partial^2V}{\partial x^2}+cI\right|\right|^2 f_{\infty}dxdv+8\tau n\int_{\mathbb{R}^{2n}} \left(\frac{f(t)}{f_{\infty}} -1\right)^2 \left|\left|\frac{\partial^2V}{\partial x^2}+c I\right|\right|^2 f_{\infty}dxdv.
\end{multline}
\eqref{t.d.f^2V} can be reformulated as 
\begin{multline}\label{t.d.f^2V1}
\frac{d}{dt}\left( e^{-8\tau n t}\displaystyle \int_{\mathbb{R}^{2n}} \left(\frac{f(t)}{f_{\infty}} -1\right)^2 \left|\left|\frac{\partial^2V}{\partial x^2}+c I\right|\right|^2 f_{\infty}dxdv \right)  \leq -\sigma e^{-8\tau n t} \int_{\mathbb{R}^{2n}} |u_{2}|^2\left|\left|\frac{\partial^2V}{\partial x^2}+c I\right|\right|^2 f_{\infty}dxdv. 
\end{multline}
\subsubsection*{Step 3, $t-$dependent functional $\Psi$:} In order to prove the short-time regularization of \eqref{Vhyp.ell.1} and \eqref{Vhyp.ell.2} we introduce now an auxiliary functional that depends explicitly on time. Our strategy  is the generalization of the approach in \cite[Theorem A.12]{Vil}, \cite[Theorem 1.1]{Herau}, \cite[Theorem 4.8]{AE}.

For $t \in (0,t_0],$ we consider the following functional
\begin{equation}\label{func.t} \displaystyle
\Psi(t,f(t))\colonequals \displaystyle  \int_{\mathbb{R}^{2n}} \left(\frac{f(t)}{f_{\infty}} -1\right)^2 \left(\gamma_1 e^{-8\tau n t}\left|\left|\frac{\partial^2V}{\partial x^2}+c I\right|\right|^2+\gamma_2\right) f_{\infty}dxdv+\int_{\mathbb{R}^{2n}} u^TP u f_{\infty}dxdv,
\end{equation} 
with the $t-$ and $x-$dependent matrix in $\mathbb{R}^{2n\times 2n},$
\begin{equation}\label{P(x,t)}
P=P(t,x)\colonequals\begin{pmatrix}
2\varepsilon^3 t^3I&\varepsilon^2 t^2I\\\varepsilon^2 t^2I& 2\varepsilon tI+t(\frac{\partial^2V}{\partial x^2}+cI)
\end{pmatrix}.
\end{equation}
 $\varepsilon,$
 $\gamma_1,$ and $\gamma_2$  are  positive constants which we shall fix  later.
 We note that, for all $t\in (0,t_0],$ 
\begin{equation}\label{P>0}
P(t,x)\geq \begin{pmatrix}
\varepsilon^3 t^3I&0\\0& t(\frac{\partial^2V}{\partial x^2}+cI)+\varepsilon tI
\end{pmatrix}>\begin{pmatrix}
\varepsilon^3 t^3I&0\\0& t(\frac{\partial^2V}{\partial x^2}+cI)
\end{pmatrix}\geq 0
\end{equation}
as  $\frac{\partial^2V}{\partial x^2}+cI$ is positive semi-definite. Thus,  $\Psi(t,f(t))$ is non-negative and satisfies \begin{multline}\label{func.t.pos.}
\displaystyle \Psi(t,  f(t))\geq \displaystyle \displaystyle  \int_{\mathbb{R}^{2n}} \left(\frac{f(t)}{f_{\infty}} -1\right)^2 \left(\gamma_1 e^{-8\tau n t} \left|\left|\frac{\partial^2V}{\partial x^2}+c I\right|\right|^2+\gamma_2\right) f_{\infty}dxdv+\varepsilon^3t^3\displaystyle \int_{\mathbb{R}^{2n}}|u_1|^2 f_{\infty} dx dv\\+ t\int_{\mathbb{R}^{2n}} u_2^T \left(\frac{\partial^2V}{\partial x^2}+(c+\varepsilon)I\right) u_2 f_{\infty} dxdv. 
\end{multline}
Our goal is to show that $\Psi(t,  f(t))$ decreases. To this end  we estimate the time derivative of the second term in \eqref{func.t}. 
 First, \eqref{derivS} yields 
\begin{multline}\label{derv.S.hypel.}
\frac{d}{dt}\int_{\mathbb{R}^{2n}}u^TPuf_{\infty}dxdv\\ =-2\sigma\int_{\mathbb{R}^{2n}}\left\{\sum_{i=1}^n (\partial_{v_i}u)^TP\partial_{v_i}u\right\}f_{\infty}dxdv-\int_{\mathbb{R}^{2n}}  u^T\left\{QP+PQ^T-\partial _t P\right\}u f_{\infty}dxdv\\ -\int_{\mathbb{R}^{2n}}  u^T\left\{[\nabla_x V\cdot \nabla_v-v\cdot \nabla_x+\nu v\cdot\nabla_v-\sigma \Delta_v]P\right\} u f_{\infty}dxdv,
\end{multline} with $Q= \begin{pmatrix}
0&I\\
-\frac{\partial^2 V(x)}{\partial x^2}&\nu I
\end{pmatrix}.$
We consider each terms of \eqref{derv.S.hypel.}. Because of \eqref{P>0}, the first term can be estimated as
\begin{equation}\label{second.deriv.}
-2\sigma\int_{\mathbb{R}^{2n}}\left\{\sum_{i=1}^n (\partial_{v_i}u)^TP\partial_{v_i}u\right\}f_{\infty}dxdv\leq -2t\sigma\int_{\mathbb{R}^{2n}}\left\{\sum_{i=1}^n (\partial_{v_i}u_2)^T\left(\frac{\partial^2V}{\partial x^2}+c I\right)
\partial_{v_i}u_2\right\}f_{\infty}dxdv.
\end{equation}
For the third term of \eqref{derv.S.hypel.} we have $$[\nabla_x V\cdot \nabla_v-v\cdot \nabla_x+\nu v\cdot\nabla_v-\sigma \Delta_v]P=\begin{pmatrix}
0&0\\
0& -t\frac{\partial^2 (v\cdot \nabla_x V)}{\partial x^2}
\end{pmatrix}$$
and using $vf_{\infty}=-\frac{\sigma}{\nu}\nabla_v f_{\infty}$ yields 
\begin{multline}\label{est.mix.V}
-\int_{\mathbb{R}^{2n}}  u^T\left\{[\nabla_x V\cdot \nabla_v-v\cdot \nabla_x+\nu v\cdot\nabla_v-\sigma \Delta_v]P\right\} u f_{\infty}dxdv\\=\frac{2t\sigma}{\nu}\int_{\mathbb{R}^{2n}}\left\{\sum_{i=1}^n (\partial_{v_i}u_{2})^T \frac{\partial^2(\partial_{x_i}V)}{\partial x^2} u_{2} \right\}f_{\infty}dxdv.
\end{multline} 
For the second term of \eqref{derv.S.hypel.} we compute 
\begin{multline}
-\int_{\mathbb{R}^{2n}} u^T\left\{QP+PQ^T-\partial_t P\right\}uf_{\infty}dxdv\\
=-\int_{\mathbb{R}^{2n}} \begin{pmatrix} u_1\\ u_2 \end{pmatrix}^T \small{\begin{pmatrix}
0&
(t-2\varepsilon^3t^3)\left(\frac{\partial^2V}{\partial x^2}+c I\right)\\
(t-2\varepsilon^3t^3)\left(\frac{\partial^2V}{\partial x^2}+c I\right)& (-1+2\nu t-2\varepsilon^2 t^2)\left(\frac{\partial^2V}{\partial x^2}+c I\right)
\end{pmatrix}}\begin{pmatrix} u_1\\ u_2 \end{pmatrix}f_{\infty}dxdv
\\ -\int_{\mathbb{R}^{2n}} \begin{pmatrix} u_1\\ u_2 \end{pmatrix}^T \small{\begin{pmatrix}
2\varepsilon^2t^2(1-3\varepsilon)I&
[2c\varepsilon^3t^3+\nu \varepsilon^2 t^2+2(1-\varepsilon)\varepsilon t]I\\
[2c \varepsilon^3t^3+\nu \varepsilon^2 t^2+2(1-\varepsilon)\varepsilon t]I&[2c \varepsilon^2t^2+4\varepsilon \nu t-2\varepsilon]I
\end{pmatrix}}\begin{pmatrix} u_1\\ u_2 \end{pmatrix}f_{\infty}dxdv.
\end{multline}
Using  the estimates
\begin{multline*}
-(t-2\varepsilon^3t^3)\int_{\mathbb{R}^{2n}}u_1^T\left(\frac{\partial^2V}{\partial x^2}+cI\right)u_2f_{\infty}dxdv\\ \leq  \varepsilon^3 t^2 |1-2\varepsilon^3t^2|\int_{\mathbb{R}^{2n}}|u_1|^2f_{\infty}dxdv+\frac{|1-2\varepsilon^3t^2|}{4\varepsilon^3}\int_{\mathbb{R}^{2n}}|u_2|^2 \left|\left|\frac{\partial^2V}{\partial x^2}+c I\right|\right|^2f_{\infty}dxdv
\end{multline*}
and 
\begin{multline*}
-(-1+2\nu t-2\varepsilon^2 t^2)\int_{\mathbb{R}^{2n}}u_2^T\left(\frac{\partial^2V}{\partial x^2}+c I\right)u_2f_{\infty}dxdv\\ \leq |1-2\nu t+2\varepsilon^2 t^2|  \int_{\mathbb{R}^{2n}}|u_2|^2 \left|\left|\frac{\partial^2V}{\partial x^2}+c I\right|\right|f_{\infty}dxdv,
\end{multline*}
we get 
\begin{multline}\label{QP+PQ^T.V}
-\int_{\mathbb{R}^{2n}} u^T\left\{QP+PQ^T-\partial_t P\right\}uf_{\infty}dxdv\\
\leq \int_{\mathbb{R}^{2n}}  |u_2|^2\left[\frac{|1-2\varepsilon^3t^2|}{2\varepsilon^3}\left|\left|\frac{\partial^2V}{\partial x^2}+c I\right|\right|^2+|1-2\nu t+2\varepsilon^2 t^2|\left|\left|\frac{\partial^2V}{\partial x^2}+c I\right|\right|\right]
f_{\infty}dxdv
\\-\int_{\mathbb{R}^{2n}} \begin{pmatrix} u_1\\ u_2 \end{pmatrix}^T \small{\begin{pmatrix}
2\varepsilon^2t^2(1-3\varepsilon-\varepsilon|1-2\varepsilon^2t^2|)I&
[2c\varepsilon^3t^3+\nu \varepsilon^2 t^2+2(1-\varepsilon)\varepsilon t]I\\
[2c \varepsilon^3t^3+\nu \varepsilon^2 t^2+2(1-\varepsilon)\varepsilon t]I&[2c \varepsilon^2t^2+4\varepsilon \nu t-2\varepsilon]I
\end{pmatrix}}\begin{pmatrix} u_1\\ u_2 \end{pmatrix}f_{\infty}dxdv.
\end{multline}
We fix $\varepsilon=\varepsilon(t_0)>0$ so that the element in the upper left corner of the matrix in  \eqref{QP+PQ^T.V} is positive for $t>0;$ more precisely we require
  \begin{equation}\label{eps(t_0)}
1-3\varepsilon-\varepsilon|1-2\varepsilon^2t^2|>0 \, \, \, \, \, \text{ for all }\, \, \,  t \in [0,t_0]. 
\end{equation}
  Then,  the matrix in  the last line of \eqref{QP+PQ^T.V} can be estimated as 
\begin{multline*}
\begin{pmatrix}
2\varepsilon^2t^2(1-3\varepsilon-\varepsilon|1-2\varepsilon^2t^2|)I&[2c\varepsilon^3t^3+\nu \varepsilon^2 t^2+2(1-\varepsilon)\varepsilon t]I\\
[2c\varepsilon^3t^3+\nu \varepsilon^2 t^2+2(1-\varepsilon)\varepsilon t]I&[2c \varepsilon^2t^2+4\varepsilon \nu t-2\varepsilon]I
\end{pmatrix}\\ \geq \begin{pmatrix}
0&
0\\
0&[2c \varepsilon^2t^2+4\varepsilon \nu t-2\varepsilon]I-\frac{[2c\varepsilon^2t^2+\nu \varepsilon t+2(1-\varepsilon)]^2}{2(1-3\varepsilon-\varepsilon|1-2\varepsilon^2t^2|)}I
\end{pmatrix}.
\end{multline*}
Using this matrix inequality,  we obtain from \eqref{QP+PQ^T.V}:
\begin{multline}\label{QP+pQ^T.V1}
-\int_{\mathbb{R}^{2n}} u^T\left\{QP+PQ^T-\partial_t P\right\}uf_{\infty}dxdv\\
\leq \int_{\mathbb{R}^{2n}}  |u_2|^2\left[\frac{|1-2\varepsilon^3t^2|}{2\varepsilon^3}\left|\left|\frac{\partial^2V}{\partial x^2}+c I\right|\right|^2+|1-2\nu t+2\varepsilon^2 t^2|\left|\left|\frac{\partial^2V}{\partial x^2}+c I\right|\right|\right. \\ \left. -2c\varepsilon^2t^2-4\varepsilon \nu t+2\varepsilon+\frac{[2c\varepsilon^2t^2+\nu \varepsilon t+2(1-\varepsilon)]^2}{2(1-3\varepsilon-\varepsilon|1-2\varepsilon^2t^2|)}\right]
f_{\infty}dxdv.
\end{multline}
\eqref{derv.S.hypel.}, \eqref{second.deriv.}, \eqref{est.mix.V}, and \eqref{QP+pQ^T.V1} show that  
\begin{multline*}
\frac{d}{dt}\int_{\mathbb{R}^{2n}}u^TPuf_{\infty}dxdv 
\leq -2t\sigma\int_{\mathbb{R}^{2n}}\left\{\sum_{i=1}^n (\partial_{v_i}u_2)^T\left(\frac{\partial^2V}{\partial x^2}+c I\right)
\partial_{v_i}u_2\right\}f_{\infty}dxdv\\
+\frac{2t\sigma}{\nu}\int_{\mathbb{R}^{2n}}\left\{\sum_{i=1}^n (\partial_{v_i}u_{2})^T \frac{\partial^2(\partial_{x_i}V)}{\partial x^2} u_{2} \right\}f_{\infty}dxdv\\+\int_{\mathbb{R}^{2n}}  |u_2|^2\left[\frac{|1-2\varepsilon^3t^2|}{2\varepsilon^3}\left|\left|\frac{\partial^2V}{\partial x^2}+c I\right|\right|^2+|1-2\nu t+2\varepsilon^2 t^2|\left|\left|\frac{\partial^2V}{\partial x^2}+c I\right|\right| \right. \\ \left.  -2c\varepsilon^2t^2-4\varepsilon \nu t+2\varepsilon+\frac{[2c\varepsilon^2t^2+\nu \varepsilon t+2(1-\varepsilon)]^2}{2(1-3\varepsilon-\varepsilon|1-2\varepsilon^2t^2|)}\right]
f_{\infty}dxdv.
\end{multline*}   
As the matrix \eqref{Condition1} is positive semi-definite, we have 
\begin{multline*}
-2t\sigma\int_{\mathbb{R}^{2n}}\left\{\sum_{i=1}^n (\partial_{v_i}u_2)^T\left(\frac{\partial^2V}{\partial x^2}+cI\right)
\partial_{v_i}u_2\right\}f_{\infty}dxdv
+\frac{2t\sigma}{\nu}\int_{\mathbb{R}^{2n}}\left\{\sum_{i=1}^n (\partial_{v_i}u_{2})^T \frac{\partial^2(\partial_{x_i}V)}{\partial x^2} u_{2} \right\}f_{\infty}dxdv\\ \leq \tau t\int_{\mathbb{R}^{2n}}u_2^T\left(\frac{\partial^2V}{\partial x^2}+cI\right)
u_2f_{\infty}dxdv \leq \tau t\int_{\mathbb{R}^{2n}}|u_2|^2\left|\left|\frac{\partial^2V}{\partial x^2}+c I\right|\right|
f_{\infty}dxdv.
\end{multline*}
 Subsequently, 
 \begin{multline}\label{der.S.V.}
\frac{d}{dt}\int_{\mathbb{R}^{2n}}u^TPuf_{\infty}dxdv 
\\ \leq \int_{\mathbb{R}^{2n}}  |u_2|^2\left[\frac{|1-2\varepsilon^3t^2|}{2\varepsilon^3}\left|\left|\frac{\partial^2V}{\partial x^2}+c I\right|\right|^2+(|1-2\nu t+2\varepsilon^2 t^2|+\tau t)\left|\left|\frac{\partial^2V}{\partial x^2}+c I\right|\right| \right. \\ \left.  -2c\varepsilon^2t^2-4\varepsilon \nu t+2\varepsilon+\frac{[2c\varepsilon^2t^2+\nu \varepsilon t+2(1-\varepsilon)]^2}{2(1-3\varepsilon-\varepsilon|1-2\varepsilon^2t^2|)}  \right]
f_{\infty}dxdv.
\end{multline} 
\subsubsection*{Step 4, decay of the functional $\Psi$:}   We estimate the time derivative of \eqref{func.t}:
Combining  \eqref{t.der.}, \eqref{t.d.f^2V1}, and \eqref{der.S.V.} yield
\begin{multline}
\frac{d}{dt}\Psi(t,f(t) )
\\ \leq - \int_{\mathbb{R}^{2n}}  |u_2|^2\left[\left(\sigma e^{-8\tau n t} \gamma_1-\frac{|1-2\varepsilon^3t^2|}{2\varepsilon^3} \right)\left|\left|\frac{\partial^2V}{\partial x^2}+c I\right|\right|^2-(|1-2\nu t+2\varepsilon^2 t^2|+\tau t)\left|\left|\frac{\partial^2V}{\partial x^2}+c I\right|\right| \right. \\ \left. +2\sigma \gamma_2 +2c\varepsilon^2t^2+4\varepsilon \nu t-2\varepsilon-\frac{[2c\varepsilon^2t^2+\nu \varepsilon t+2(1-\varepsilon)]^2}{2(1-3\varepsilon-\varepsilon|1-2\varepsilon^2t^2|)}\right]
f_{\infty}dxdv.
\end{multline}
  We fix $\gamma_1>0$ and $\gamma_2>0$ such that \begin{multline}\label{aux.eq.5}
 \left(\sigma e^{-8\tau n t } \gamma_1-\frac{|1-2\varepsilon^3t^2|}{2\varepsilon^3} \right)\left|\left|\frac{\partial^2V}{\partial x^2}+c I\right|\right|^2-(|-1+2\nu t-2\varepsilon^2 t^2|+\tau t)\left|\left|\frac{\partial^2V}{\partial x^2}+c I\right|\right|\\ +2\sigma \gamma_2 +2c\varepsilon^2t^2+4\varepsilon \nu t-2\varepsilon-\frac{[2c\varepsilon^2t^2+\nu \varepsilon t+2(1-\varepsilon)]^2}{2(1-3\varepsilon-\varepsilon|1-2\varepsilon^2t^2|)}\geq 0
 \end{multline} 
for all $x \in \mathbb{R}^n$ and $t\in [0,t_0].$   We recall that we have fixed $\varepsilon=\varepsilon(t_0)$  so that \eqref{eps(t_0)} holds, which makes the above denominator positive. The existence of such $\gamma_1>0$ and $\gamma_2>0$  can be proven by the following arguments: We can consider the left hand side of \eqref{aux.eq.5} as a quadratic polynomial of $\left|\left|\frac{\partial^2V}{\partial x^2}+c I\right|\right|\in [0, \infty).$ As time $t$ varies in a bounded interval $[0,t_0],$ the terms containing $t$ are bounded. Therefore, we can choose large values for $\gamma_1=\gamma_1(t_0)$ and $\gamma_2=\gamma_2(t_0)$ so that  this quadratic polynomial is non-negative  for all $t\in [0,t_0].$

Consequently, we obtain that  
$$
\frac{d}{dt}\Psi(t, f(t))\leq 0. $$
Hence $
\Psi(t, f(t)) $ is decreasing and  
\begin{equation}\label{psi0}
\Psi(t,f(t))\leq \Psi(0,f_0) \, \, \, \, \, \text{ for all } \,\, \, \, t\in [0,t_0].
\end{equation} 
\eqref{func.t.pos.} and \eqref{psi0} show that  
\begin{equation}\label{u-1}
\displaystyle \int_{\mathbb{R}^{2n}}|u_1|^2 f_{\infty} dx dv \leq \frac{1}{ \varepsilon ^3t^3}\int_{\mathbb{R}^{2n}} \left(\frac{f_0}{f_{\infty}} -1\right)^2 \left(\gamma_1  \left|\left|\frac{\partial^2V}{\partial x^2}+c I\right|\right|^2+\gamma_2\right) f_{\infty}dxdv,
\end{equation}
\begin{equation}\label{u-2}
\int_{\mathbb{R}^{2n}} |u_2|^2 f_{\infty} dxdv\leq  \frac{1}{\varepsilon t}\int_{\mathbb{R}^{2n}} \left(\frac{f_0}{f_{\infty}} -1\right)^2 \left(\gamma_1  \left|\left|\frac{\partial^2V}{\partial x^2}+c I\right|\right|^2+\gamma_2\right) f_{\infty}dxdv,
\end{equation} and 
\begin{equation}\label{u-2 V}
 \int_{\mathbb{R}^{2n}} u_2^T \left(\frac{\partial^2V}{\partial x^2}+cI\right) u_2 f_{\infty} dxdv \leq \frac{1}{t}\int_{\mathbb{R}^{2n}} \left(\frac{f_0}{f_{\infty}}-1 \right)^2 \left(\gamma_1  \left|\left|\frac{\partial^2V}{\partial x^2}+c I\right|\right|^2+\gamma_2\right) f_{\infty}dxdv.
\end{equation}
It is clear that there is a positive constant $C$ such that \begin{equation}\label{V..}\gamma_1 \left|\left|\frac{\partial^2V}{\partial x^2}+c I\right|\right|^2+\gamma_2\leq C\left(  \left| \left|\frac{\partial^2V}{\partial x^2}\right|\right|^2+1\right).
\end{equation}
\eqref{u-1},  a proper linear combination of \eqref{u-2} and \eqref{u-2 V}, and \eqref{V..} imply the claimed estimates \eqref{Vhyp.ell.1}, \eqref{Vhyp.ell.2}.
 \end{proof}

\bigskip

\begin{proof}[{{Proof of Corollary \ref{Cor.}}}]
Theorem \ref{Main} and Theorem \ref{Hyp.ellip.} show that, for $t\geq t_0>0,$ 
\begin{multline}\label{t_01} 
\int_{\mathbb{R}^{2n}}\left(\frac{f(t)}{f_{\infty}}-1\right)^2 f_{\infty}dxdv +\int_{\mathbb{R}^{2n}}\left|\nabla_{x}\left(\frac{f(t)}{f_{\infty}}\right)\right|^2 f_{\infty}dxdv\\+\int_{\mathbb{R}^{2n}}\nabla^T_{v}\left(\frac{f(t)}{f_{\infty}}\right)\left(\frac{\partial^2 V}{\partial x^2}+(1-\alpha_0)I\right)\nabla_{v}\left(\frac{f(t)}{f_{\infty}}\right)f_{\infty} dxdv \\
   \leq C e^{-2\lambda (t-t_0)} \left[\int_{\mathbb{R}^{2n}}\left(\frac{f(t_0)}{f_{\infty}}-1\right)^2 f_{\infty}dxdv+\int_{\mathbb{R}^{2n}}\left|\nabla_{x}\left(\frac{f(t_0)}{f_{\infty}}\right)\right|^2 f_{\infty}dxdv\right.\\+ \left.\int_{\mathbb{R}^{2n}}\nabla^T_{v}\left(\frac{f(t_0)}{f_{\infty}}\right)\left(\frac{\partial^2 V}{\partial x^2}+ (1-\alpha_0) I\right)\nabla_{v}\left(\frac{f(t_0)}{f_{\infty}}\right)f_{\infty}dxdv\right]
  \end{multline} 
 holds with the constant $C$ and the rate $\lambda$ given in Theorem \ref{Main}.
Using \eqref{Vhyp.ell.1} and \eqref{Vhyp.ell.2} at $t=t_0,$ we get 
\begin{equation}\label{t_02}
 \displaystyle \int_{\mathbb{R}^{2n}}\left|\nabla_x \left(\frac{f(t_0)}{f_{\infty}}\right) \right|^2f_{\infty} dx dv \leq \frac{C_1}{t_0^{3}} \int_{\mathbb{R}^{2n}} \left(\frac{f_0}{f_{\infty}} -1\right)^2 \left(\left|\left|\frac{\partial^2V}{\partial x^2}\right|\right|^2+1\right) f_{\infty}dxdv
\end{equation}
and 
\begin{multline}\label{t_03}
 \displaystyle \int_{\mathbb{R}^{2n}}\nabla_v^T \left(\frac{f(t_0)}{f_{\infty}}\right)\left(\frac{\partial^2V}{\partial x^2}+(1-\alpha_0)I\right)\nabla_v \left(\frac{f(t_0)}{f_{\infty}}\right)  f_{\infty} dx dv\\ \leq \frac{C_2}{t_0}\int_{\mathbb{R}^{2n}} \left(\frac{f_0}{f_{\infty}} -1\right)^2 \left(\left|\left|\frac{\partial^2V}{\partial x^2}\right| \right|^2+1\right) f_{\infty}dxdv.
\end{multline}
Combining \eqref{t_01}, \eqref{t_02}, and \eqref{t_03}, we obtain 
\eqref{main.est.hyp}. 
\end{proof}
\bigskip

\section{Appendix}
\subsection{ Proof that Assumption \textcolor{blue}{2.}\ref{A2}\textcolor{blue}{'} implies Assumption \ref{A1}}\label{6.1}
 Assume Assumption \textcolor{blue}{2.}\ref{A2}' is satisfied. Let  $(u_1, u_2,...,u_{n+1})^T$  be any vector in $  \mathbb{R}^{n(n+1)},$ where $u_i $ is a vector in $ \mathbb{R}^n$ for all $i\in\{1,...,n+1\}. $ 
We compute the quadratic form of the matrix \eqref{Condition1}
$$\begin{pmatrix}
u_1\\ u_2\\.\\.\\.\\ u_{n+1}
\end{pmatrix}^T \begin{pmatrix}
 \nu \left(\frac{\partial^2  V(x)}{\partial x^2}+cI\right)&0&...&0&-\frac{1}{2}\frac{\partial^2(\partial_{x_1}V(x))}{\partial x^2} \\
 0&\nu \left(\frac{\partial^2  V(x)}{\partial x^2}+cI\right)&...&0&-\frac{1}{2}\frac{\partial^2(\partial_{x_2}V(x))}{\partial x^2}\\ 
 ...&...&...&...&...\\
 0&0&...&\nu \left(\frac{\partial^2  V(x)}{\partial x^2}+cI\right)&-\frac{1}{2}\frac{\partial^2(\partial_{x_n}V(x))}{\partial x^2}\\
 -\frac{1}{2}\frac{\partial^2(\partial_{x_1}V(x))}{\partial x^2}&-\frac{1}{2}\frac{\partial^2(\partial_{x_2}V(x))}{\partial x^2}&...&-\frac{1}{2}\frac{\partial^2(\partial_{x_n}V(x))}{\partial x^2}&\frac{\tau \nu}{2\sigma}\left(\frac{\partial^2  V(x)}{\partial x^2}+cI\right)
 \end{pmatrix}\begin{pmatrix}
u_1\\ u_2\\.\\.\\.\\ u_{n+1}
\end{pmatrix}$$
$$=\sum_{i=1}^n\left\{ \nu u_i^T\left(\frac{\partial^2  V(x)}{\partial x^2}+cI\right)u_i-u^T_{i}\frac{\partial^2  (\partial_{x_i}V(x))}{\partial x^2}u_{n+1}\right\}+\frac{\tau \nu}{2\sigma}u_{n+1}^T\left(\frac{\partial^2  V(x)}{\partial x^2}+cI\right)u_{n+1}.$$ To show that \eqref{Condition1} is positive semi-definite, it is enough to show the quadratic form above is non-negative.  Assumption \textcolor{blue}{2.}\ref{A2}' implies
 \begin{equation*}
 \left| u^T_{i}\frac{\partial^2  (\partial_{x_i}V(x))}{\partial x^2}u_{n+1}\right|\leq |u_{i}||u_{n+1}|\sqrt{\frac{2\tau \nu^2}{n\sigma}}(\alpha(x)+ c)\leq \nu (\alpha(x)+ c) |u_i|^2+{\frac{ \tau \nu}{2n\sigma}}(\alpha(x)+ c)|u_{n+1}|^2. 
 \end{equation*} Therefore, we get the desired result
$$\sum_{i=1}^n\left\{ \nu u_i^T\left(\frac{\partial^2  V(x)}{\partial x^2}+cI\right)u_i-u^T_{i}\frac{\partial^2  (\partial_{x_i}V(x))}{\partial x^2}u_{n+1}+\frac{\tau \nu}{2n\sigma}u_{n+1}^T\left(\frac{\partial^2  V(x)}{\partial x^2}+cI\right)u_{n+1}\right\} $$
$$\geq \sum_{i=1}^n\left\{ \nu u_i^T\left(\frac{\partial^2  V(x)}{\partial x^2}-\alpha(x)I\right)u_i+\frac{\tau \nu}{2n\sigma}u_{n+1}^T\left(\frac{\partial^2  V(x)}{\partial x^2}-\alpha(x) I\right)u_{n+1}\right\}\geq 0. $$ 
$$ \, \, \, \, \, \, \, \, \, \, \, \, \, \, \, \, \, \,\, \, \, \, \, \, \, \, \, \, \,  \, \, \, \, \, \, \, \, \, \, \, \, \, \, \, \, \, \, \, \, \, \, \, \, \, \, \, \, \, \, \, \, \, \, \, \, \, \, \, \, \, \, \, \, \, \, \, \, \, \, \, \, \, \, \, \, \, \, \, \, \, \, \, \, \, \, \, \, \, \, \, \, \, \, \, \, \, \, \, \, \, \, \, \, \, \, \, \, \, \, \, \, \, \, \, \, \, \, \, \, \, \, \, \, \, \, \, \, \, \, \, \, \, \, \, \, \, \, \, \, \, \, \, \, \, \, \, \, \, \, \, \, \, \, \, \, \, \, \, \, \, \, \, \, \, \, \, \, \, \, \, \, \, \, \, \, \, \, \, \, \, \, \, \, \, \, \, \, \, \, \, \, \, \, \, \, \, \, \, \, \, \, \, \, \, \, \, \, \, \, \, \, \, \, \, \, \, \, \, \, \, \, \, \, \, \, \, \, \, \, \, \, \, \, \, \, \, \, \, \, \, \, \, \, \, \square $$

\subsection{ Matrix inequalities for Section 5.1}\label{6.2}

\begin{lemma}\label{lem:mat.inq.}
Let $\alpha_0>-\infty$ be the constant defined by \eqref{alpha-zero},  $a\in \mathbb{R}$ be some constant such that $a+\alpha_0>\frac{\nu^2}{4},$ and $P\colonequals \begin{pmatrix}
 2I&\nu I\\\nu I& 2\frac{\partial^2 V}{\partial x^2}+2a I
\end{pmatrix}.$ Then \begin{equation}\label{c_1P<()<c_2P}
c_1 P\leq \begin{pmatrix}
 I&0\\0& \frac{\partial^2 V}{\partial x^2}+(1-\alpha_0)I
\end{pmatrix}\leq c_2 P
\end{equation} holds with  $c_1 \colonequals \frac{1}{a+\alpha_0+1+\sqrt{(a+\alpha_0-1)^2+\nu^2}}>0, \, \, \, \, \, c_2 \colonequals \frac{a+\alpha_0+1+\sqrt{(a+\alpha_0-1)^2+\nu^2}}{4(a+\alpha_0)-\nu^2}>0.$
\end{lemma}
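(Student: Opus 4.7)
The plan is to reduce the $2n\times 2n$ matrix inequality \eqref{c_1P<()<c_2P} to a one-parameter family of $2\times 2$ generalized eigenvalue inequalities, and then to show that the extremal generalized eigenvalues over the admissible parameter range are attained at $\alpha_i(x)=\alpha_0$ and equal exactly $c_1$ and $c_2$.

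First, at each fixed $x\in\R^n$ I would simultaneously block-diagonalize both matrices in \eqref{c_1P<()<c_2P}. Since $\frac{\partial^2 V(x)}{\partial x^2}$ is symmetric, pick an orthogonal $U$ with $U^T\frac{\partial^2 V(x)}{\partial x^2}U=\text{diag}(\alpha_1(x),\ldots,\alpha_n(x))$ and conjugate both sides by $\tilde U\colonequals \begin{pmatrix} U & 0\\ 0 & U\end{pmatrix}$. After an obvious permutation of rows and columns, the two matrices decouple into $n$ independent $2\times 2$ blocks
$$P_i\colonequals \begin{pmatrix} 2 & \nu\\ \nu & 2\alpha_i+2a\end{pmatrix},\qquad M_i\colonequals \begin{pmatrix} 1 & 0\\ 0 & \alpha_i+1-\alpha_0\end{pmatrix},$$
so \eqref{c_1P<()<c_2P} becomes equivalent to $c_1 P_i\leq M_i\leq c_2 P_i$ for every $i\in\{1,\ldots,n\}$ and every admissible $\alpha_i\geq\alpha_0$.

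Next, I would inspect the generalized characteristic polynomial $\det(M_i-\lambda P_i)=0$. A direct expansion gives
$$\lambda^2\,[4(\alpha_i+a)-\nu^2]-2\lambda\,[(\alpha_i+a)+(\alpha_i+1-\alpha_0)]+(\alpha_i+1-\alpha_0)=0,$$
which at the base point $\alpha_i=\alpha_0$ simplifies to $\lambda^2[4(a+\alpha_0)-\nu^2]-2\lambda(a+\alpha_0+1)+1=0$. Writing $X\colonequals a+\alpha_0+1$ and $Y\colonequals\sqrt{(a+\alpha_0-1)^2+\nu^2}$, the algebraic identity $X^2-Y^2=4(a+\alpha_0)-\nu^2$ (which, combined with the standing hypothesis $a+\alpha_0>\nu^2/4$, also ensures $X-Y>0$) lets me read off the two roots as $\tfrac{1}{X+Y}=c_1$ and $\tfrac{1}{X-Y}=c_2$. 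At $\alpha_i=\alpha_0$ both $M_i-c_1P_i$ and $c_2P_i-M_i$ therefore have determinant zero; a short trace computation ($\mathrm{tr}(M_i-c_1P_i)=2Y/(X+Y)>0$ and $\mathrm{tr}(c_2P_i-M_i)=2Y/(X-Y)>0$) shows that they are both positive semi-definite.

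The main step is to propagate these inequalities to all $\alpha_i\geq\alpha_0$. Writing $s\colonequals\alpha_i-\alpha_0\geq 0$, I have the clean decompositions
$$M_i-c_1P_i=\bigl(M_i|_{s=0}-c_1P_i|_{s=0}\bigr)+s\begin{pmatrix} 0 & 0\\ 0 & 1-2c_1\end{pmatrix},\quad c_2P_i-M_i=\bigl(c_2P_i|_{s=0}-M_i|_{s=0}\bigr)+s\begin{pmatrix} 0 & 0\\ 0 & 2c_2-1\end{pmatrix},$$
reducing everything to the two scalar inequalities $c_1\leq\tfrac12\leq c_2$. These follow at once from $c_1=1/(X+Y)$, $c_2=1/(X-Y)$ together with $X+Y\geq 2$ and $X-Y\leq 2$; the latter two bounds are immediate from $Y\geq|a+\alpha_0-1|$. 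There is really no obstacle here: once the simultaneous block diagonalization and the factorization $X^2-Y^2=4(a+\alpha_0)-\nu^2$ are in place, the rest is elementary $2\times 2$ linear algebra.
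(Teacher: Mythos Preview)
Your proof is correct. Both you and the paper ultimately reduce the $2n\times 2n$ inequality to a family of $2\times 2$ problems indexed by the eigenvalues $\alpha_i(x)\geq\alpha_0$ of $\frac{\partial^2 V}{\partial x^2}$, but the two arguments differ in presentation and in how positive semi-definiteness is verified. The paper keeps the full matrix $A=P-2kM$ (with $M$ the block-diagonal matrix), computes its characteristic polynomial, observes that it factors into $n$ quadratics, and then for the specific values $k=\frac{1}{2c_1},\frac{1}{2c_2}$ checks three sign conditions on the coefficients of each quadratic (essentially Vieta) to conclude that all eigenvalues of $A$ have the right sign. You instead carry out the block diagonalization explicitly, recognize $c_1,c_2$ as the generalized eigenvalues of the pair $(M_i,P_i)$ at the extremal point $\alpha_i=\alpha_0$, and then propagate the inequalities to all $\alpha_i\geq\alpha_0$ via the monotone decomposition in $s=\alpha_i-\alpha_0$, reducing everything to the scalar bounds $c_1\leq\frac12\leq c_2$. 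Your route is a bit more conceptual: it explains \emph{why} these particular constants arise (they are the extremal generalized eigenvalues, attained precisely when $\alpha_i=\alpha_0$), whereas the paper's computation verifies them after the fact. Both arguments are of comparable length and neither requires any ingredient the other does not.
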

\begin{proof}
We consider, for some $k\in \mathbb{R}$ to be chosen later as $\frac{1}{2c_{1,2}},$
\begin{equation*}
A\colonequals P-2k\begin{pmatrix}
 I&0\\0& \frac{\partial^2 V}{\partial x^2}+(1-\alpha_0)I
\end{pmatrix}=\begin{pmatrix}
 2(1-k)I&\nu I\\\nu I& 2(1-k)\left(\frac{\partial^2 V}{\partial x^2}+(1-\alpha_0)I\right)+2(a+\alpha_0-1)I
\end{pmatrix}.
\end{equation*}
We check the (real) eigenvalues $\eta$ of the symmetric matrix $A$ (depending on $k$). It is easy to check that $\eta= 2(1-k)$ is not an eigenvalue of $A.$ If $\eta\neq 2(1-k),$ then we have the condition
\begin{multline*}
\det(A-\eta I)=\begin{vmatrix}
2(1-k)I-\eta I&\nu I\\\nu I& 2(1-k)\left(\frac{\partial^2 V}{\partial x^2}+(1-\alpha_0)I\right)+2(a+\alpha_0-1)I-\eta I
\end{vmatrix}\\
=\begin{small}\frac{1}{(2(1-k)-\eta)^n}\begin{vmatrix}
2(1-k)I-\eta I&0\\ \nu I& (2(1-k)-\eta)\left[2(1-k)\left(\frac{\partial^2 V}{\partial x^2}+(1-\alpha_0)I\right)+2(a+\alpha_0-1)I-\eta I\right]-\nu^2 I
\end{vmatrix}\end{small}\\
=\det\left((2(1-k)-\eta)\left[2(1-k)\left(\frac{\partial^2 V}{\partial x^2}+(1-\alpha_0)I\right)+2(a+\alpha_0-1)I-\eta I\right]-\nu^2 I\right) =0.
\end{multline*}
If $\alpha_i,$ $i\in\{1,...,n\}$ are the eigenvalues of $\frac{\partial^2 V}{\partial x^2},$    then the eigenvalues $\eta$ of $A$ satisfy
\begin{equation}\label{product}
\prod_{i=1}^{n} \left(\eta^2-2\eta[(1-k)(\alpha_i-\alpha_0+2)+a+\alpha_0-1]+4(1-k)^2(\alpha_i-\alpha_0+1)+4(1-k)(a+\alpha_0-1)-\nu^2 \right)=0.
\end{equation}

\textbf{Right inequality of \eqref{c_1P<()<c_2P}:}
From \eqref{product}, we see that
$A$ is positive semi-definite (i.e., all $\eta\geq 0$) if the following three conditions hold:
\begin{equation}\label{k1}
1-k\geq 0, \, \, \, \, \, \, \text{ (due to the first minor of $A$)}
\end{equation}
\begin{equation}\label{k2}
(1-k)(\alpha_i-\alpha_0+2)+a+\alpha_0-1\geq 0, \, \, \, \forall i\in\{1,...,n\},
\end{equation}
\begin{equation}\label{k3}
4(1-k)^2(\alpha_i-\alpha_0+1)+4(1-k)(a+\alpha_0-1)-\nu^2 \geq 0, \, \, \, \, \forall i\in\{1,...,n\}.
\end{equation}
We set  $$k\colonequals \frac{1}{2c_2}>0.$$
Then, \eqref{k1} holds: 
\begin{equation}\label{1-k}
1-k=\frac{\sqrt{(a+\alpha_0-1)^2+\nu^2}-(a+\alpha_0-1)}{2}>0.
\end{equation}
 Using $\alpha_i\geq \alpha_0$ for all $i\in \{1,...,n\}$ we see that \eqref{k2} also holds: $$(1-k)(\alpha_i-\alpha_0+2)+a+\alpha_0-1\geq 2(1-k)+a+\alpha_0-1=\sqrt{(a+\alpha_0-1)^2+\nu^2}>0. $$
To verify \eqref{k3} we estimate using $\alpha_i\geq \alpha_0$ for all $i\in \{1,...,n\}$ and \eqref{1-k} 
 \begin{multline*}\label{quadp}
4(1-k)^2(\alpha_i-\alpha_0+1)+4(1-k)(a+\alpha_0-1)-\nu^2\\\geq 4(1-k)^2+4(1-k)(a+\alpha_0-1)-\nu^2=0.\end{multline*}
Therefore, for $k$ defined in \eqref{1-k}, $A$ is positive semi-definite. Hence, the inequality on the right hand side of \eqref{c_1P<()<c_2P} holds.

\textbf{Left inequality of \eqref{c_1P<()<c_2P}:} Similarly, $A$ is negative semi-definite if the following three conditions hold:\begin{equation}\label{k1-}
1-k\leq 0,
\end{equation}
\begin{equation}\label{k2-}
(1-k)(\alpha_i-\alpha_0+2)+a+\alpha_0-1\leq 0, \, \, \, \forall i\in\{1,...,n\},
\end{equation}
\begin{equation}\label{k3-}
4(1-k)^2(\alpha_i-\alpha_0+1)+4(1-k)(a+\alpha_0-1)-\nu^2 \geq 0, \, \, \, \, \forall i\in\{1,...,n\}.
\end{equation}
Setting $$
k\colonequals \frac{1}{2c_1}>0 $$
we find 
\begin{equation}\label{k-}
1-k=\frac{-\sqrt{(a+\alpha_0-1)^2+\nu^2}-(a+\alpha_0-1)}{2}<0
\end{equation}
and 
$$(1-k)(\alpha_i-\alpha_0+2)+a+\alpha_0-1\leq 2(1-k)+a+\alpha_0-1=-\sqrt{(a+\alpha_0-1)^2+\nu^2}<0. $$
Finally, we check using $\alpha_i\geq \alpha_0$ for all $i\in \{1,...,n\}$ and \eqref{k-}  $$4(1-k)^2(\alpha_i-\alpha_0+1)+4(1-k)(a+\alpha_0-1)-\nu^2
\geq 4(1-k)^2+4(1-k)(a+\alpha_0-1)-\nu^2=0.$$
 Therefore, for $k$ defined in \eqref{k-}, $A$ is negative semi-definite. Hence, the inequality on the left hand side of \eqref{c_1P<()<c_2P} holds.
\end{proof}

\begin{Remark}
Lemma \ref{lem:mat.inq.} proves the following matrix inequalities from Section 5.1:
\begin{itemize}
\item[(a)]If $a=0$ and $\alpha_0>\frac{\nu^2}{4},$ then \eqref{c_1P<()<c_2P} is the matrix inequality \eqref{mat.in1}.
\item[(b)] If $a=\frac{\varepsilon^2}{2}$ and $\alpha_0=\frac{\nu^2}{4},$ then \eqref{c_1P<()<c_2P} is the matrix inequality \eqref{mat.in2}.
\item[(c)] \eqref{c_1P<()<c_2P} coincides with the matrix inequality \eqref{mat.in3}.
\end{itemize}
\end{Remark}
\subsection{ Proof of  inequality \eqref{aA}}\label{6.3}
We recall the assumption $\alpha_0< \frac{\nu^2}{4}.$ We first rewrite  
$$\displaystyle A_2^{-1}=\frac{2\nu \min\{1, \alpha_0\}}{1+ \frac{\nu^2}{2}-\alpha_0+ \sqrt{( \frac{\nu^2}{2}-\alpha_0-1)^2+\nu^2}}=\frac{4 \min\{1, \alpha_0\}}{{\nu}+2(1-\alpha_0)\nu^{-1}+ \sqrt{(\nu^2-4\alpha_0)+4(\alpha_0+1)^2\nu^{-2}}}, $$
$$\nu -\sqrt{\nu^2-4 \alpha_0}=\frac{4\alpha_0}{\nu+\sqrt{\nu^2-4\alpha_0}}.$$
 Then \eqref{aA} is equivalent to 
 \begin{equation}\label{aaAA}
 \frac{\alpha_0}{\nu+\sqrt{\nu^2-4\alpha_0}}\geq \frac{\min\{1, \alpha_0\}}{\nu+2(1-\alpha_0)\nu^{-1}+ \sqrt{(\nu^2-4\alpha_0)+4(\alpha_0+1)^2\nu^{-2}}}.\end{equation}
 
If $\min\{1, \alpha_0\}=\alpha_0,$ then \eqref{aaAA} is true because of $$\nu+2(1-\alpha_0)\nu^{-1}+ \sqrt{(\nu^2-4\alpha_0)+4(\alpha_0+1)^2\nu^{-2}}>\nu+\sqrt{\nu^2-4\alpha_0}. $$

If $\min\{1, \alpha_0\}=1,$ then \eqref{aaAA} is equivalent to $$\alpha_0 \nu-2\alpha_0(\alpha_0-1)\nu^{-1}+ \alpha_0\sqrt{(\nu^2-4\alpha_0)+4(\alpha_0+1)^2\nu^{-2}}\geq \nu+\sqrt{\nu^2-4\alpha_0}, $$
or equivalently
$$(\alpha_0-1) (\nu^2-2\alpha_0)\nu^{-1}+ \alpha_0\sqrt{(\nu^2-4\alpha_0)+4(\alpha_0+1)^2\nu^{-2}}\geq \sqrt{\nu^2-4\alpha_0}. $$
The last inequality holds since $$(\alpha_0-1) (\nu^2-2\alpha_0)\nu^{-1}\geq (\alpha_0-1) (\nu^2-4\alpha_0)\nu^{-1}\geq 0$$ and $$\alpha_0\sqrt{(\nu^2-4\alpha_0)
+4(\alpha_0+1)^2\nu^{-2}}>\sqrt{\nu^2-4\alpha_0}.$$ 

These two cases show that inequality \eqref{aA} holds. 
$$ \, \, \, \, \, \, \, \, \, \, \, \, \, \, \, \, \, \,\, \, \, \, \, \, \, \, \, \, \,  \, \, \, \, \, \, \, \, \, \, \, \, \, \, \, \, \, \, \, \, \, \, \, \, \, \, \, \, \, \, \, \, \, \, \, \, \, \, \, \, \, \, \, \, \, \, \, \, \, \, \, \, \, \, \, \, \, \, \, \, \, \, \, \, \, \, \, \, \, \, \, \, \, \, \, \, \, \, \, \, \, \, \, \, \, \, \, \, \, \, \, \, \, \, \, \, \, \, \, \, \, \, \, \, \, \, \, \, \, \, \, \, \, \, \, \, \, \, \, \, \, \, \, \, \, \, \, \, \, \, \, \, \, \, \, \, \, \, \, \, \, \, \, \, \, \, \, \, \, \, \, \, \, \, \, \, \, \, \, \, \, \, \, \, \, \, \, \, \, \, \, \, \, \, \, \, \, \, \, \, \, \, \, \, \, \, \, \, \, \, \, \, \, \, \, \, \, \, \, \, \, \, \, \, \, \, \, \, \, \, \, \, \, \, \, \, \, \, \, \, \, \, \, \, \, \square $$


\bigskip
\bigskip
\textbf{Acknowledgement.} Both authors acknowledge support by the Austrian Science Fund (FWF) project \href{https://doi.org/10.55776/F65}{10.55776/F65}.
We are also grateful to the anonymous referees, whose suggestions helped to improve this paper.
\bigskip

\textbf{Data Availability.} Data will be made available on reasonable request.
\bigskip

\textbf{Conflict of interest.} The authors have no conflict of interest to declare.


\end{document}